\documentclass[11pt]{amsart}
\usepackage[utf8]{inputenc}
\usepackage{amsfonts, mathtools, nccmath, amssymb, graphicx, multicol, commath, bbm, mathrsfs, bm, subfig, mathrsfs}
\usepackage{amsthm, mathabx}
\usepackage{esint}
\usepackage{braket}
\usepackage[foot]{amsaddr}
\usepackage{bibentry}
\usepackage{color,dsfont}
\usepackage[top=1in, bottom=1in, left=1in, right=1in]{geometry}
\usepackage{hyperref}
\hypersetup{colorlinks,breaklinks,
             linkcolor=blue,urlcolor=blue,
             anchorcolor=blue,citecolor=blue}

\usepackage{cite}

\newcommand{\R}{\mathbb{R}}
\newcommand{\N}{\mathbb{N}}
\newcommand{\Z}{\mathbb{Z}}
\renewcommand{\P}{\mathscr{P}}

\newcommand{\V}{\mathcal{V}}
\newcommand{\C}{\mathcal{C}}

\newcommand{\E}{\mathbb{E}}

\newcommand{\Leb}[1][]{\mathscr{L}}

\newcommand{\G}{\mathcal{G}}
\newcommand{\D}{\mathcal{D}}

\newcommand{\B}{\mathsf{B}}

\newcommand{\Ecal}{\mathcal{E}}

\newcommand{\sans}[1]{\mathsf{#1}}
\newcommand{\sP}{\sans{P}}

\newcommand{\sH}{\sans{H}}
\newcommand{\sG}{\sans{\Gamma}}
\newcommand{\sSig}{\sans{\Sigma}}
\newcommand{\sF}{\sans{F}}

\newcommand{\eps}{\varepsilon}

\newcommand{\id}{\bm{i}}

\DeclareMathOperator{\Cov}{Cov}

\DeclareMathOperator{\tr}{tr}

\DeclareMathOperator*{\argmin}{argmin}

\DeclareMathOperator{\Ent}{Ent}

\DeclareMathOperator{\kl}{KL}

\DeclareMathOperator{\range}{Range}
\DeclareMathOperator{\rank}{Rank}
\DeclareMathOperator{\Ricc}{Ricc}
\DeclareMathOperator{\OT}{OT}
\DeclareMathOperator{\const}{const}

\newtheorem{theorem}{Theorem}[section]
\newtheorem{assumption}{Assumption}[section]

\newtheorem{lemma}[theorem]{Lemma}
\newtheorem{proposition}[theorem]{Proposition}
\newtheorem{corollary}[theorem]{Corollary}
\theoremstyle{definition}
\newtheorem{definition}[theorem]{Definition}

\definecolor{byzantine}{rgb}{0.74, 0.2, 0.64}
\definecolor{darkgreen}{rgb}{0.1,0.6,0.1}
\definecolor{darkred}{rgb}{0.6,0,0}
\definecolor{lightgray}{rgb}{0.5,0.5,0.5}
\definecolor{darkorange}{rgb}{1.0, 0.55, 0.0}

 \newenvironment{listi}
  {\begin{list} 
 {(\roman{broj})}
{ \usecounter{broj}}
     \setlength{\labelwidth}{25pt}
  }
{   \end{list} }
\newcounter{broj}
\renewcommand{\hat}{\widehat}

\def\XXint#1#2#3{{\setbox0=\hbox{$#1{#2#3}{\int}$}
     \vcenter{\hbox{$#2#3$}}\kern-.5\wd0}}

\newtheorem{remarkx}[theorem]{Remark}
\newenvironment{remark}{\pushQED{\qed}\remarkx}{\popQED\endremarkx}

\numberwithin{equation}{section}

\title{Large-time behavior and accuracy of the Mean-Field Ensemble Kalman Filter in the Linear Detectable Setting}
\author{Franca Hoffmann, Sangmin Park, Andrew M. Stuart} 
\address{F. Hoffmann, S. Park, A.M. Stuart: Department of Computing and Mathematical Sciences, California Institute of Technology, Pasadena, CA 911025, USA}
\email{franca.hoffmann@caltech.edu, sangminp@caltech.edu, astuart@caltech.edu}

\begin{document}

\begin{abstract}
The ensemble Kalman filter (EnKF), originally developed in the geophysical sciences, is now widely used for state and parameter estimation problems in various domains of application. It may be viewed as a robust, cheap-to-implement,
alternative to the optimal, Bayesian, filter. However, despite its empirical successes, theoretical understanding of its properties, in relation to the optimal filter, is in its infancy.
In this paper we contribute novel theoretical understanding, studying the behavior of the mean-field limit of the ensemble Kalman filter (MFEnKF) in the linear setting. 
In this setting the MFEnKF coincides with the Kalman filter itself, for Gaussian initial data. We study the MFEnKF with general, non-Gaussian, initial data. Under the assumptions of nondegeneracy of the signal noise and detectability of the signal-observation pair we make three specific contributions. First, we derive the MFEnKF via a variational approximation of the Bayes update using a covariance-weighted optimal transport metric. Secondly, we use this metric to show that the MFEnKF is a strict contraction towards the subspace of Gaussian measures, giving explicit geometric rates in terms of the covariances, in both discrete and continuous time. Finally, in the continuous-time setting, we deduce a stable form of almost sure accuracy of the MFEnKF: given any pair of (nondegenerate) initial data for the MFEnKF and the optimal filter, any uniformly continuous moments of the two filtering distributions coincide in large time, for almost every observation path.
\end{abstract}

\maketitle

\medskip
\noindent{\small\textbf{Keywords:}
ensemble Kalman filter, optimal filter, asymptotic stability, accuracy, detectability, controllability, optimal transport}

\noindent{\small \textbf{MSC (2020):} 93E11, 60G35, 35B40, 49Q22, 62F15}

\section{Introduction}\label{sec:intro}

The filtering problem, in discrete time, is to estimate the unknown state $(X_n^\dag)$ of a Markov
process, given the observation sequence $(Y_j^\dag)_{1 \le j \le n}$, and to update this estimate sequentially 
in $n \ge 1$. In the Bayesian framework, the best estimate is the conditional law of the state at time $n$, given all observations up to time $n$. We refer to the conditional law as the optimal filtering distribution and the corresponding recursive process as the optimal filter, and we adopt a similar convention 
for filtering algorithms approximating the optimal filter.

Given a random vector $X_0^\dag\sim\mu^\dag$, consider the following Markov process
for signal $\{X_n^\dag\}_{n\geq 0}$ in $\R^d$, and 
observation process $\{Y_{n}^\dag\}_{n\geq 1}$ in $\R^k$ with $k\leq d$: for $n \in \Z_+$ we have
\begin{equation}\label{def:sig-obs;discrete}
\begin{split}
    X_{n+1}^\dag&= AX_{n}^\dag+\xi_{n+1}^\dag, \\
    Y_{n+1}^\dag&= HX_{n+1}^\dag+\eta_{n+1}^\dag.
\end{split}
\end{equation}
Here, $A\in\R^{d\times d}, H\in\R^{k\times d}$ and $\xi_n^\dag\overset{i.i.d.}{\sim} N(0,\Sigma)$, and $\eta_{n}^\dag\overset{i.i.d.}{\sim}N(0,\Gamma)$ are independent from each other and from the initial data, and $\Sigma\in\R^{d\times d}$, $\Gamma\in\R^{k\times k}$ are positive definite.
The Markov process is Gaussian if $\mu^\dagger$ is Gaussian, and the optimal filtering distribution is then characterized
by the Kalman filter~\cite{Kalman60}.

The ensemble Kalman filter (EnKF), introduced by Evensen~\cite{Evensen94EnKF}, is
a computationally efficient filter in high-dimensional and nonlinear
problems; it is a tractable alternative to the optimal filter when the latter is
not itself tractable. In this paper, we study the infinite-particle limit of EnKF, namely the mean-field ensemble Kalman filter (MFEnKF) \cite{CalReiStu25}. We consider the linear setting defined by \eqref{def:sig-obs;discrete}. Then,
given a random vector $X_0\sim\mu$, the MFEnKF is the recursive process, for $n \in \Z_+$, and $\Cov(\pi)\in\R_+^{d\times d}$ the covariance of probability measure $\pi$,
\begin{equation}\label{def:MFEnKF;disc}
\begin{split}
    \hat X_{n+1} &= AX_n+\xi_{n+1},\\
    X_{n+1} &=\hat X_{n+1}-\Cov(\hat \pi_{n+1}^\mu) H^T(H\Cov(\hat \pi_{n+1}^\mu) H^T+\Gamma)^{-1}(H\hat X_{n+1}+\eta_{n+1}-Y_{n+1}^\dag);
\end{split}
\end{equation}
here $\hat\pi_{n+1}^\mu$ is the conditional law of $\hat X_{n+1}$ given observations $(Y_j^\dag)_{1\leq j\leq n}$; we refer to this law as the MFEnKF distribution, and note that it depends on the initial law $\mu$ of $X_0$. The key property of the MFEnKF in this linear setting
is that, if $X_0$ is Gaussian, then the MFEnKF distribution is Gaussian at all times and is equal to the optimal filtering distribution, which is characterized by the Kalman filter.
In this paper we study the setting where $\mu$ is not Gaussian, and the resulting relation of the MFEnKF to the optimal 
filter. We make the following contributions.
\begin{listi}
    \item[{\bf (C1)}] We derive the MFEnKF via a variational approximation of the Bayes update using a covariance-weighted optimal transport metric, giving a Lagrangian interpretation of the methodology (Section~\ref{sec:EnKF;var_der}).
    \item[{\bf (C2)}] We show that the MFEnKF distributions contract exponentially towards the Gaussian manifold with respect to a covariance-weighted optimal transport metric; see Theorem~\ref{thm:gauss_lyapunov;disc} for discrete time and Theorem~\ref{thm:gauss_lyapunov;cts} for continuous time models. Furthermore, the contraction rates are explicitly characterized in terms of the covariance.
    \item[{\bf (C3)}] As a consequence, we deduce the large-time stability (Corollary~\ref{cor:MFEnKF;stable;cts}) and accuracy in relation to the optimal filter (Corollary~\ref{cor:MFEnKF_accurate;cts}) of the continuous-time MFEnKF.
\end{listi}

Contribution {\bf (C1)} concerns only the incorporation of observations,
without relying on the Gaussianity of the prior distribution. It applies
regardless of whether the signal process is linear with additive Gaussian noise.
The formulation also leads to identification of the correct metric in which to quantify 
stability of the algorithm.
Contribution {\bf (C2)} uses this metric to identify a contraction structure,
analogous to the time-varying Lyapunov function used in classical stability analysis 
of the Kalman filter~\cite{anderson71stability}. Contractivity will, of course, be
useful to control multiple forms of error propagation related to the MFEnKF.
Contribution {\bf (C3)} asserts that the MFEnKF satisfies an ideal property: regardless 
of how the algorithm is initialized, in large time MFEnKF distribution coincides with the optimal filtering distribution determined by the same observations, even though each iteration of the MFEnKF can be a poor 
approximation of the optimal filter when far from Gaussian. This asymptotic
filter accuracy is established under conditions comparable to those that guarantee asymptotic stability of the Kalman filter. Henceforth, we will simply refer to this property as accuracy; this concept should not to be confused with the commonly considered notion of accuracy in the sense of state estimation.

The paper is organized as follows. We summarize our notation in Section~\ref{ssec:notation}
and make our setting precise in Section~\ref{ssec:setting}. Section~\ref{ssec:summary} is devoted 
to a summary of our main results, and is followed by a literature review in Section~\ref{ssec:literature}.
In Section~\ref{sec:EnKF;var_der} we introduce and deploy the covariance-weighted metric to yield results that constitute {\bf (C1)}; the connection between discrete and continuous-time MFEnKF is discussed in Section~\ref{ssec:disc-cts}. Finally, in Section~\ref{sec:linear} we study the dynamics of the MFEnKF, obtaining quantitative contraction toward the Gaussian manifold {\bf (C2)} in both discrete and continuous time, and then deducing stability and accuracy {\bf (C3)} in continuous time.

\subsection{Notation}\label{ssec:notation}
We write $\R$, $\R_+$, $\R_{++}$ for the reals, nonnegative reals, and strictly positive reals, respectively. 
We write $\N$ for the positive integers and $\Z_+:=\N \cup \{0\}.$ For $q\in\mathbb{N}$, $\R^q$ is the $q$-dimensional Euclidean space, and $\Leb^q$ denotes the $q$-dimensional Lebesgue measure. The space of continuous functions (resp. continuous and bounded) from $\R^q$ to $\R^p$, given $p,q\in\N$, is denoted by $\C(\R^q;\R^p)$ (resp. $\C_b(\R^q;\R^p)$); if $p=1$ we write $\C(\R^q)$ (resp. $\C_b(\R^q)$).

We write $\R^{p\times q}$ for real $p$ by $q$ matrices. Given symmetric matrix $M\in\R^{q\times q}$, we write $M\succeq 0$ and $M\succ 0$ for, respectively, positive definite or positive semi-definite $M.$ We use $\R^{q\times q}_{+},\R^{q\times q}_{++}$ to denote the cones of semi-definite and positive definite matrices, respectively. Given $M\in\R^{q\times q}_{++}$, we write the weighted inner-product on $\R^q$ as $\langle v, w \rangle _{M}:=v^T M^{-1} w$, and $|\cdot|_{M}$ to denote the induced norm:
\begin{equation}\label{def:Mweightednorm}
    |v|_M^2 := v^T M^{-1} v \text{ for each } v\in\R^q.
\end{equation}
The identity matrix on $\R^q$ is denoted $I_q\in\R^{q\times q}_{++}$; when $q$ is clear from context, we will often omit the subscript and write $I=I_q$.

Given $M\in\R^{q\times q}$, we define the spectral radius
\begin{equation}\label{def:rho}
    \rho(M):=\max\{|\lambda|:\;\lambda\text{ is an eigenvalue of } M\}
\end{equation}
and spectral abscissa
\begin{equation}\label{def:alpha}
    \alpha(M):=\max\{\Re(\lambda):\;\lambda\text{ is an eigenvalue of } M\}.
\end{equation}
Given a symmetric matrix $S\in\R^{q\times q}$, we write $\lambda_{\max}(S)=\alpha(S)$ and $\lambda_{\min}(S)=-\alpha(-S)$, the maximum and minimum eigenvalues of $S$.

The space of probability measures on $\R^q$ is denoted by $\P(\R^q)$, and $\P_2(\R^q)$ the subset
of probability measures with finite second moments. We write $X\sim\mu$ for $X$ distributed according to
$\mu\in\P(\R^q)$. 

Given $\mu\in\P_2(\R^q)$ and any Borel measurable function $\varphi:\R^q\rightarrow\R$ growing at most quadratically at infinity, we write $\mu(\varphi)=\int_{\R^q}\varphi(u)\;\mu(du)$. For $\boldsymbol{\varphi}=(\varphi^1,\cdots,\varphi^q)$ 
we write $\mu(\boldsymbol{\varphi})=(\mu(\varphi^1),\cdots,\mu(\varphi^q))\in\R^q$. The identity function on $\R^q$ is written $\id_q$. We write $\id_q^1,\id_q^2:\R^q\times\R^q\rightarrow\R^q$ to refer to the projection onto the first and second components. When $q$ is clear from context, we simply write $\id=\id_q$, $\id^1=\id_q^1$, and $\id^2=\id_q^2$.

We write $\Cov(\mu)\in\R^{q\times q}_+$ to refer to the covariance matrix of $\mu$. Given a random vector $X\sim \mu$, we overload the notation and write $\Cov(X):=\Cov(\mu)$. We reserve the upper case letter $C$ for covariance matrices, which play a central role throughout this paper.
We will sometimes write $\P_2^g(\R^q)$ to denote the subspace of $\P_2(\R^q)$ consisting of all Gaussian measures, including those (such as Dirac measures) with singular covariance matrices. Given $m\in\R^q,C\in\R^{q\times q}_+$, we write $N(m,C)$ to denote the Gaussian distribution of mean $m$ and covariance $C$. We define the Gaussian projection $\G:\P_2(\R^q)\rightarrow \P_2^{g}(\R^q)$ mapping $\mu$ to $N(\mu(\id),\Cov(\mu))$, the normal distribution with mean and covariance of $\mu$;
the Kullback-Leibler divergence can also be used to define $\G$:
\begin{equation}\label{def:G}
    \G\mu=\argmin_{\sigma\in\P_2^g(\R^q)}\kl(\sigma||\mu).
\end{equation}

The matrix-weighted Wasserstein distances which we introduce in Section~\ref{ssec:W2M} will be a central tool throughout the paper. Given $M\in\R^{q\times q}_{++}$, we denote by $W_{2}(\cdot,\cdot;M):\P_2(\R^q)\times\P_2(\R^q)\rightarrow \R_+$ the $M$-weighted 2-Wasserstein distance; often we will abbreviate and write $W_{2,M}(\cdot,\cdot)$.

We write $\sP:\P_2(\R^d)\rightarrow\P_2(\R^d)$ to refer to the map corresponding to the signal dynamics in \eqref{def:sig-obs;discrete}, $X\mapsto AX+\xi$ for $\xi\in N(0,\Sigma)$, that is,
\begin{equation}\label{def:P}
    \sP\pi(dx)=\frac{1}{\sqrt{(2\pi)^d\det\Sigma}}\int_{\R^d}\exp\left(-\frac{|x-Ax'|^2_{\Sigma}}{2}\right)\pi(dx').
\end{equation}
Regarding the observation dynamics, we write $\B:\P_2(\R^d)\times\R^k\rightarrow\P_2(\R^d)$ to denote the prior to posterior map implied by use of Bayes theorem to incorporate data $y$ into $u$ from the inverse problem 
\begin{equation}\label{eq:LIP}
    y=Hx+\eta
\end{equation}
with prior $\pi:$
\begin{equation}\label{def:L}
    \B(\pi;y)(dx)=\frac{\exp\left(-\frac12|y-Hx|_\Gamma^2\right)\pi(dx)}{\int_{\R^d}\exp\left(-\frac12|y-Hx'|_{\Gamma}^2\right)\;\pi(dx')}.
\end{equation}

\subsection{Setting}\label{ssec:setting} 
We now describe our setting precisely, starting with the optimal filtering and the MFEnKF distributions (Section~\ref{sssec:measures}), followed by the Kalman filter and the Riccati equation (Section~\ref{sssec:KF}) and the continuous-time model (Section~\ref{sssec:cts}).

\subsubsection{Filtering Distributions}\label{sssec:measures}
Let $(X_n^\dag,Y_n^\dag)_{n\geq 0}$ be the signal-observation pair from~\eqref{def:sig-obs;discrete}; we use superscript $\dag$ for the random vectors appearing in the underlying signal-observation model to distinguish them from their counterparts in the MFEnKF~\eqref{def:MFEnKF;disc}. 
Given $\mu\in\P(\R^d)$, the optimal filtering distribution $\bar\pi^{\mu}_n$ is the conditional law  of $X_n^\dag$ given $(Y_j^\dag)_{j\leq n}$ and $X_0^\dag\sim\mu$. More precisely, $\bar\pi_0^{\mu}=\mu$ and
\begin{equation}\label{def:barpi_n}
    \bar\pi_n^{\mu}(\varphi):=\E[\varphi(X_n^\dag)|(Y_j^\dag)_{j\leq n}] \text{ for each } \varphi\in\C_b(\R).
\end{equation}
The sequence $(\bar\pi_n^{\mu})_{n\geq 0}$ can be characterized by the following recursive process (which we refer to as the optimal filter): 
\begin{equation}\label{eq:barpi:algo}
    \bar\pi_{n+1}^{\mu}=\B(\sP\bar\pi_{n}^{\mu};Y_{n+1}^{\dag})\text{ for } n\geq 1, \qquad \bar\pi_0^{\mu}=\mu.
\end{equation}
In the filtering and data assimilation literature, it is common to refer to the first step $\pi\mapsto \sP\pi$ as the forecast step, and the Bayesian update as the analysis step, language we adopt in this paper.

The MFEnKF distribution, the conditional law $\pi_n^\mu$ of $X_n$ of \eqref{def:MFEnKF;disc} given observations $(Y_j^\dag)_{j\leq n}$ and the initial law $X_0\sim\mu$, can be characterized by the recursion
\begin{equation}\label{eq:MFEnKF;algo}
\begin{split}
    \pi_{n+1}^\mu&=\B^{EK}(\mathsf{P}\pi_n^\mu;Y_{n+1}^\dag),
\end{split}
\end{equation}
where $\B^{EK}:\P_2(\R^d)\times \R^k\rightarrow\P_2(\R^d)$ approximates $\B$, and is defined by
\begin{equation}\label{eq:hatB}
\begin{split}
    \B^{EK}(\mu;y)(\varphi)&=\E^{\eta\sim N(0,\Gamma), X\sim\mu}\left[\varphi\left(T^{EK}(X;\mu,y-\eta)\right)\right] \text{ for each } \varphi\in\C_b(\R^d)\\
    T^{EK}(x;\mu,v)&:=\hat x-\Cov(\mu) H^T(H\Cov(\mu)H^T+\Gamma)^{-1}(Hx -v);
\end{split}
\end{equation}
note that the analysis step of the MFEnKF~\eqref{def:MFEnKF;disc} may be written
\[X_{n+1}=T^{EK}(\hat X_{n+1};\hat\pi_{n+1}^\mu,Y_{n+1}^\dag-\eta_{n+1}).\]
where $\hat\pi_{n+1}^\mu$ is the conditional law of $\hat X_{n+1}$ in \eqref{def:MFEnKF;disc} given $(Y_j^\dag)_{j\leq n}$.
Unlike the Bayes update $\B$, the operation $\B^{EK}$~\eqref{eq:hatB} is \emph{Lagrangian}, in the sense that it provides a recipe to move each particle in the support of the prior $\hat\pi$, and therefore provides a suitable basis for a tractable, particle-based algorithm. Indeed, the only macroscopic information the update requires from the prior is its covariance, which is easily computed at the level of samples.

Note that both $\bar\pi_n^{\mu}$ and $\pi_n^\mu$ depend on the observation sequence $(Y_j^\dag)_{j\leq n}$, hence are random probability measures. Sometimes it is more convenient to treat the realization of the observation as a fixed deterministic sequence; in this case, we will use the lower case letter and write $(y_j^\dag)_{j\leq n}$.

\subsubsection{Kalman Filter and Riccati Equation}\label{sssec:KF}
In the linear-Gaussian setting, that is, when the dynamics takes the form \eqref{def:sig-obs;discrete} and the initial data $\bar \mu$ is a Gaussian measure, Gaussianity propagates and the optimal filtering distribution is fully characterized by its mean and covariance -- the Kalman filter~\cite{Kalman60}. Thus, the problem becomes finite-dimensional. Furthermore, for Gaussians, the MFEnKF distribution coincides exactly with this optimal filtering distribution. The Kalman filter, governing the
evolution of both $\bar\pi_{n}^{\mu}$ and $\pi_{n}^{\mu}$ in the linear-Gaussian
setting, is defined by the following evolution equations for $(m_n,C_n)_{n\geq 1}$: 
\begin{equation}\label{def:KF;disc}
\begin{split}
    \hat m_{n+1} &= A m_n,\\
    \hat C_{n+1} &= A C_n A^T+\Sigma,\\
    m_{n+1} &= \hat m_{n+1}-\hat C_{n+1} H^T(H \hat C_{n+1}H^T+\Gamma)^{-1}(H\hat m_{n+1}-y_{n+1}^\dag),\\
    C_{n+1} &= \hat C_{n+1} - \hat C_{n+1} H^T(H\hat C_{n+1}H^T+\Gamma)^{-1}H\hat C_{n+1};
\end{split}
\end{equation}
$m_0\in\R^d$ and $C_0\in\R_{+}^{d\times d}$ are the mean and covariance of the Gaussian initial datum $\mu\in\P_2(\R^d)$.

The sequence of covariance matrices $(\hat C_n)_{n\geq 1}$ satisfies the recursion
\begin{equation}\label{eq:Riccd;recursion}
\hat C_{n+1}=\Ricc_d(\hat C_n),
\end{equation}
where the discrete Riccati operator $\Ricc_d:\R^{d\times d}_+\rightarrow\R^{d\times d}_+$ is defined by
\begin{equation}\label{def:Riccd}
\Ricc_d(C):=A C A^T+\Sigma-A C H^T(H C H^T+\Gamma)^{-1}H C A^T.
\end{equation}
Under detectability of $(H,A)$ and controllability (or stabilizability) of $(A,\Sigma)$ (see Definition~\ref{def:detectability}), $\hat C_n$ converges to the fixed point of $\Ricc_d$ which is a positive definite matrix.

The crucial fact we use throughout this paper to understand the dynamics of the MFEnKF in the linear setting is that the covariance matrices of the MFEnKF satisfy the Riccati recursion~\eqref{eq:Riccd;recursion}, \emph{regardless of whether the initial datum is Gaussian}. This property has been previously noted in the literature, for instance by Del Moral and Tugaut~\cite{DelMoralTugaut18stability}, and is not
shared by the optimal filter itself.

\subsubsection{Continuous-time Model}\label{sssec:cts}
Let us now consider the continuous-time model. Let
\begin{equation}\label{def:tau_scale}
A=I+\tau \sF,\quad H=\tau\sH, \Gamma=\tau\sG
\end{equation}
where $\tau>0$ is the time step and $\sF\in\R^{d\times d}$, $\sSig\in\R^{d\times d}_+$, $\sH\in\R^{k\times d}$, $\sG\in\R^{k\times k}_{++}$. Changing variables $Y_{n+1}^\dag=Z_{n+1}^\dag-Z_n^\dag$ and taking the limit $\tau\to 0$, we obtain the signal and observation processes
\begin{equation}\label{eq:linearSigObs;cts}
\begin{split}
    dX_t^\dag &= \sF X_t^\dag\;dt +\sqrt{\sans{\Sigma}}\;dB_t^\dag,\\
    dZ_t^\dag &= \sH X_t^\dag\;dt + \sqrt{\sG}\;dW_t^\dag;
\end{split}
\end{equation}
here, $(B_t^\dag)_{t\geq 0},(W_t^\dag)_{t\geq 0}$ are standard Brownian motions independent from each other and the initial datum. See, for instance, \cite[Section 3.1]{CalReiStu25} for a more detailed derivation.

The continuous-time mean-field Ensemble Kalman filter is
\begin{equation}\label{eq:EnKF;mf;cts}
    d X_t=\sF X_t\;dt + \sqrt{\sans{\Sigma}}\;dB_t - \Cov(\pi_t^\mu)\sH^T\sG^{-1}(\sH X_t\;dt+\sqrt{\sG}dW_t- dZ_t^\dag),\quad X_0\sim\mu
\end{equation}
where $\pi_t^\mu$ is the MFEnKF distribution in continuous time, namely the conditional law of $X_t$ given the observations $(Z_s^\dag)_{s\leq t}$, and we use the superscript to note the dependence on the initial law $\mu$ of $X_0$. Indeed, one can derive~\eqref{eq:EnKF;mf;cts} as the limit of the discrete-time MFEnKF~\eqref{def:MFEnKF;disc} as $\tau\to 0$ under the scaling \eqref{def:tau_scale}.
Covariance matrix $C_t:=\Cov(X_t)$ satisfies the Riccati differential equation
\begin{equation}\label{eq:Riccc;diff}
\dot C_t=\Ricc_c(C_t),
\end{equation}
where the continuous Riccati operator $\Ricc_c:\R^{d\times d}_+\rightarrow\R^{d\times d}_+$ is defined by
\begin{equation}\label{def:Riccc}
    \Ricc_c(C):=C\sF^T+\sF C+\sSig-C\sH^T\sG^{-1}\sH C.
\end{equation}
As in discrete time, the covariance evolution is independent of the observation $(Z_t^\dag)_{t\geq 0}$ and does not require Gaussianity of the initial distribution.
Equation \eqref{eq:Riccc;diff} can be derived from \eqref{eq:Riccd;recursion}
under the scaling \eqref{def:tau_scale}.

\subsection{Summary of Main Results}\label{ssec:summary}

The optimal filter~\eqref{eq:barpi:algo} and the MFEnKF~\eqref{eq:MFEnKF;algo} both depend on the observation sequence and the initial distribution. In typical applications, only one realization of observation sequence is available, and we are interested in approximating the optimal filter distribution without access to the true initial distribution. Thus, we are interested in the stability of the filtering distributions with respect to initial data, and our notation makes explicit the dependence on the initial measure but not the observation sequence. In particular, \emph{whenever we compare filtering distributions, from the optimal filter or the MFEnKF, we always assume that they are driven by the same observations}. Moreover, we will take a \emph{pathwise} viewpoint in this paper; namely, we are interested in the behavior of the filtering distributions for each (almost every) sample path of $(Y_n^\dag)_{n\geq 1}$. In Sections~\ref{sssec:C1}-\ref{sssec:C3} we outline the significance of, and methodology leading to, our three
main contributions.

\subsubsection{Contribution {\bf (C1)}}\label{sssec:C1}
It is well known that, in the linear setting, the analysis step of the MFEnKF~\eqref{def:MFEnKF;disc} can be written in terms of minimization of a randomized quadratic objective function~\cite{ba2022randomized}:
\begin{equation}\label{eq:MFEnKF;tikhonov}
    X_{n}\in\argmin_{x}\frac12|x-\hat X_{n}|_{\Cov(\hat X_{n})}^2 + \frac12 |H x + \eta_{n}-y_{n}^\dag|_{\Gamma}^2.
\end{equation}
Contribution {\bf (C1)} provides a variational formulation of the MFEnKF that, amongst multiple potential
uses in the study of the algorithm, clarifies the origin of the weights $\Cov(\hat X_{n+1})$ and $\Gamma$ in terms of the underlying Bayes update.

Consider the Bayesian inverse problem \eqref{eq:LIP} with
prior $\pi_{prior}$. The posterior measure $\pi_{post}$ can be defined by the variational formulation of Bayes theorem
\begin{equation}\label{eq:bayes;variational}
    \pi_{post}\in\argmin_{\mu\in\P_2(\R^d)}\kl(\mu||\pi_{prior})+\frac{1}{2}\int_{\R^d}|Hx-y|^2_\Gamma\;d\mu(x).
\end{equation} 
Although explicitly solvable for Gaussian prior,
this optimization formulation of Bayesian inversion requires, in general, some form of approximation
\cite{wainwright2008graphical}. In the context of filtering, a computationally desirable approximation would provide an 
explicit transport map, moving `each particle in' $\pi_{prior}$ to obtain (approximately) a corresponding particle in $\pi_{post}$.
With this goal in mind, it is natural to approximate \eqref{eq:bayes;variational} by replacing the Kullback-Leibler divergence with an optimal transport metric, as then the associated Euler-Lagrange equation specifies a transport map, under suitable assumptions. 

The natural problem then is to determine the correct cost for the optimal transport distance. In Section~\ref{ssec:W2M} we introduce and study basic properties of the matrix-weighted 2-Wasserstein distances $W_2(\cdot,\cdot;M)$ (often abbreviated as $W_{2,M}$), which is a modification of the 2-Wasserstein distance, with the cost function $(x,y)\mapsto |x-y|^2$ replaced with $(x,y)\mapsto |x-y|_M^2$, given a positive definite matrix $M\in\R^{d\times d}_{++}$~\cite{reich2015probabilistic}.
For each probability measure $\hat\pi(dx)\propto \exp(-\hat U(x))\;dx$, we would like to know what choice of $M$ will lead to a good approximation of $\kl(\cdot||\hat\pi)$. Proposition~\ref{prop:funct_ineq;W2M} establishes that, if $\nabla^2 \hat U(x)\geq M^{-1}\succ 0$, then
\[\frac12 W_2^2(\mu,\hat\pi;M)\leq \kl(\mu||\hat\pi)\leq \frac12 W_2(\mu,\hat\pi;M)\left(2\sqrt{I_M(\mu||\hat\pi)}-W_2(\mu,\hat\pi;M)\right);\]
the left and right inequalities are respectively the Talagrand and the HWI-type inequalities, and $I_M$ is the $M$-weighted Fisher information (see~\eqref{def:IM}). From the above bounds and the Cramer-Rao and Brascamp-Lieb inequalities~\eqref{eq:BL+CR;hatC}, we see that it is reasonable to choose $M=\Cov(\hat\pi)$.

Consequently, in Section~\ref{sec:EnKF;var_der} we arrive at the variational problem that is a modification of \eqref{eq:bayes;variational}, namely
\begin{equation}\label{eq:MFEnKF;var;summary}
    \tilde\pi_{post,\eta}\in\argmin_{\mu\in\P_2(\R^d)}\frac12 W_{2}^2(\mu,\pi_{prior};\Cov(\pi_{prior}))+\frac{1}{2}\int_{\R^d}|Hx+\eta-y|^2_\Gamma\;d\mu(u);
\end{equation}
Note that, in addition to replacing $\kl$ by the covariance-weighted Wasserstein distance, we have additionally introduced the observational noise term $\eta\sim N(0,\Gamma)$. This 
arises because of the Lagrangian and implicit nature of the variational problem; see Section~\ref{sec:EnKF;var_der} for a more detailed discussion. Thus $\tilde\pi_{post,\eta}$ is a random measure, but once the uniqueness of the minimizer is verified, we can take expectation over $\eta$ to obtain a deterministic approximation of the posterior  -- i.e. $\tilde\pi_{post}(\varphi):=\E^{\eta}[\tilde\pi_{post,\eta}(\varphi)] \text{ for each } \varphi\in\C_b(\R^d)$.

In Theorem~\ref{thm:MFEnKF;var} we show that the variational problem~\eqref{eq:MFEnKF;var;summary} has a unique minimizer, and coincides with the analysis step of the mean-field ensemble Kalman filter~\eqref{eq:MFEnKF;tikhonov} under the following correspondence:  $\eta=\eta_n$, $y= y_n^\dag$, and $\pi_{prior}=\hat\pi_n$, the conditional law of $\hat X_n$ given the observations.

\subsubsection{Contribution {\bf (C2)}}\label{sssec:C2}

In Section~\ref{sec:linear} we turn to analyzing the behavior of the MFEnKF with non-Gaussian initial data. We consider the linear detectable setting, both in discrete and continuous time, which in particular guarantees asymptotic stability of the Kalman filter; see Section~\ref{ssec:prelim} for further details. 

We show in Theorem~\ref{thm:gauss_lyapunov;disc} the discrete time version of Contribution {\bf (C2)}, namely that
\begin{equation}\label{sum:gauss_conv;disc}
W_2^2(\hat\pi_{n+1}^\mu,\hat\pi_{n+1}^{\G\mu};\Cov(\hat\pi_{n+1}^\mu))\leq (1-\beta_n) W_2^2(\hat\pi_n^\mu,\hat\pi_n^{\G\mu};\Cov(\hat\pi_n^\mu))
\end{equation}
for some explicitly characterized constants $\beta_n\in (0,1)$ each depending on $\Cov(\hat\pi_n^\mu)$ and bounded away from zero uniformly in $n$. 

The difficulty in obtaining strictly contractive bounds at each step as in \eqref{sum:gauss_conv;disc} lies in the fact that the detectability of $(H,A)$ and controllability of $(A,\Sigma)$ cannot rule out initial transient behavior of the MFEnKF when measured in the Euclidean norm. More precisely, the dynamics are primarily governed by the transition matrix
\[\Phi_n:=A-\Cov(\hat\pi_n^\mu) H^T(H \Cov(\hat\pi_n^\mu) H^T+\Gamma)^{-1}H\in\R^{d\times d},\]
which, given detectability and controllability, converges to $\Phi_\infty$ with $\rho(\Phi_\infty)<1$. However, this does not guarantee $|\Phi_\infty(x-y)|<|x-y|$, which requires a stronger condition $\rho(\Phi_\infty)\leq \norm{\Phi_\infty}_2<1$.
Contribution {\bf (C2)} exploits the fact that $\hat X_n\mapsto \Phi_n^T \Cov(\hat X_n)\Phi_n$ is a Lyapunov function in the Loewner order, which is somewhat surprising given that $\rho(\Phi_n)$ may be much larger than $1$ for initial $n\geq 1$.

In the continuous-time setting, we assume c-detectability of $(\sH,\sF)$ and $\sSig,\sG\succ 0$ and establish the counterpart to \eqref{sum:gauss_conv;disc} in Theorem~\ref{thm:gauss_lyapunov;cts}: that is, given $\mu\in\P_2(\R^d)$ we characterize $\beta_t>0$ (uniformly bounded away from $0$ for all $t\geq 0$) such that for each sample path of the observation, the continuous-time MFEnKF distributions $\pi_t^\mu,\pi_t^{\G\mu}$ initialized at $\mu,\G\mu$ (see \eqref{eq:EnKF;mf;cts}) satisfy
\begin{equation}\label{sum:gauss_conv;cts}
W_2(\pi_t^\mu,\pi_t^{\G\mu},\Cov(\pi_t^\mu))\leq\exp\left(-\frac12\beta_t t\right)W_2(\mu,\G\mu;\Cov(\mu)).
\end{equation}

The estimates \eqref{sum:gauss_conv;disc}-\eqref{sum:gauss_conv;cts} suggest that the covariance-weighted Wasserstein distance provides the right lens to capture the off-Gaussian behavior of the MFEnKF. Although it depends on the time index through the covariance matrices, they stabilize rapidly under the detectability condition; the Riccati update is a strict contraction in suitable metrics~\cite{Bougerol93}. These distances have close connections with the time-varying Lyapunov functions introduced in the classical work of B.D.O Anderson~\cite{anderson71stability} on the stability of the Kalman filter.

\subsubsection{Contribution {\bf (C3)}}\label{sssec:C3}
Recall that, on the Gaussian manifold, both the optimal filtering and MFEnKF distributions coincide with the Kalman filtering distribution. Thus, in \eqref{sum:gauss_conv;cts} $\pi_t^{\G\mu}=\bar\pi_t^{\G\mu}$, which connects
the MFEnKF directly to the optimal filter, and allows us to deduce stability and accuracy of the MFEnKF with non-Gaussian initial data.

In Corollary~\ref{cor:MFEnKF;stable;cts}, we establish the asymptotic stability of the MFEnKF in the usual 2-Wasserstein distance, a simplified statement of which is as follows: for any $\mu,\nu\in\P_2(\R^d)$ with nondegenerate covariance,
\[\limsup_{t\rightarrow\infty}\frac1t\log W_2(\pi_t^\mu,\pi_t^\nu)<0.\]

Finally, combining Theorem~\ref{thm:gauss_lyapunov;cts} with asymptotic stability of the optimal filtering distribution $(\bar\pi_t^\nu)_{t\geq 0}$ due to Ocone and Pardoux~\cite{OconePardeoux96asymptotic}, we show in Corollary~\ref{cor:MFEnKF_accurate;cts} that the MFEnKF is accurate in the following form: given any bounded and uniformly continuous $\varphi:\R^d\rightarrow\R$ or $\varphi=\id$ (which corresponds to the conditional expectation), for almost every sample path of $(Z_t^\dag)_{t\geq 0}$
\[\pi_t^\mu(\varphi)-\bar\pi_t^\nu(\varphi)\rightarrow 0 \text{ as } t\rightarrow\infty.\]

\subsection{Literature}\label{ssec:literature}

Stratonovich~\cite{Stratonovich59,Stratonovich60conditional} initiated the study of the recursive procedure for the evolution of the conditional law~\eqref{def:barpi_n}, and Kushner~\cite{Kushner64} derived the stochastic partial differential equation characterizing the conditional law in continuous time, now known as the Kushner-Stratonovich equation. Duncan~\cite{Duncan68evaluation,Duncan70absolute}, Mortgensen~\cite{Mortensen1966optimal}, and Zakai~\cite{Zakai69optimal} independently derived the linear SPDE satisfied by the unnormalized conditional law, the Duncan-Mortensen-Zakai equation. The well-posedness theory of these equations were then developed notably by Pardoux~\cite{Pardouxt80stochastic,Pardoux1982equations} and Krylov and Rozovskii~\cite{KrylovRozovskii77,KrylovRozovskii78,KrylovRozovskii82}. The pathwise or robust theory of filtering was initiated by Clark~\cite{Clark1978pathwise}. We refer the interested readers to the book of Bain and Crisan~\cite[Chapter 1.3]{BainCrisan09} for a more thorough and precise historical account.

Asymptotic stability of the filter with respect to the initial data can be thought of as well-posedness of the filtering problem, and is of fundamental importance: asymptotic stability implies that regardless of the initialization, the filter will eventually reach the same conclusion as if it was initialized with the true initial datum.
When the signal process itself is asymptotically stable, it is understood in broad generality that the asymptotic stability of the filter should follow; along these lines, we mention the works of Atar and Zeitouni~\cite{AtarZeitouni97exp}, Atar~\cite{Atar98AoP}, and van Handel~\cite{vanHandel09stability}. On the other hand, one intuitively expects filter stability when the system is observable -- i.e. no two signals should produce the same observation sequence. Van Handel showed that observability implies stability when the signal state space is compact~\cite{vanH09PTRF} and that the stronger condition of uniform observability implies stability for noncompact state spaces in~\cite{vanHandel09uniform}. Some quantitative stability results are also available, although with assumptions on the model or the initial data not as general as one might want. We mention the papers of Stannat~\cite{Stannat05time-dep,Stannat06stability} based on a variational approach, and the thesis of van Handel~\cite[Chapter 4.1]{vanHandel06Thesis} using a probabilistic approach.

From connections to (linear) systems theory, one might expect filter stability to hold under a weaker condition than observability called detectability, which, roughly speaking, only asks for observability in the unstable directions of the signal process; see the short and illuminating survey~\cite{vanHandel10proceedings} on connections between filter stability and systems theory. Asymptotic stability of the optimal filter under detectability is known only in the case of finite state space~\cite{vanH09PTRF} or the linear-Gaussian setting (namely the Kalman filter). We also mention the work of Mitter and Newton~\cite{MitterNewton03_variational} on the connection between filtering and optimal control via the variational characterization of the Bayes formula.

The Kalman filter, introduced in the pioneering work of Rudolf Kalman~\cite{Kalman60}, is a special case of the optimal filter in the linear-Gaussian setting; its continuous-time counterpart, the Kalman-Bucy filter, was introduced by Kalman and Bucy~\cite{KalmanBucy61}. 
Bucy established the global well-posedness and asymptotic stability of the covariance matrices under the assumptions of controllability and observability~\cite{Bucy67global}, and Wonham analyzed the equation under the weaker conditions of stabilizability detectability~\cite{Wonham68riccati}. We mention a couple results of a more quantitative nature: B.D.O Anderson~\cite{anderson71stability} used the time-varying Lyapunov function to obtain exponential convergence rate of the covariance matrices, and Bougerol~\cite{Bougerol93} showed that the Riccati recursions are strict contractions in a natural Riemannian metric for positive definite matrices.

Ocone and Pardoux~\cite{OconePardeoux96asymptotic} proved the asymptotic stability of the Kalman-Bucy filter in the probabilistic sense, namely that two Kalman-Bucy filter distributions with different initial data converge weakly (as probability measures) almost surely. Moreover, they derived the asymptotic exponential rates, where the exponent is (essentially) given by the spectral abscissa of the limiting observer system. Moreover, they adapted the techniques of Makowski~\cite{Makowski86} (see also Makowski and Sowers~\cite{SowersMakowski90}) to the continuous-time setting to treat non-Gaussian initial data.

The ensemble Kalman filter was introduced by Evensen in his seminal work~\cite{Evensen94EnKF}, and developed further in~\cite{vanLeeuwenEvensen96data,BurLeeuwenEvensen98analysis,Houtekamer98data}. 
Since its introduction, EnKF has seen great empirical success with a wide range of applications in geosciences, including oceanography, oil reservoir simulation, and weather forecasting. However, its theoretical understanding remains limited. For the history of related ensemble-based methods as well as their empirical success in various state estimation and Bayesian inverse problems, we refer to a survey of literature contained in the paper of Calvello, Reich, and Stuart~\cite{CalReiStu25}. Early theoretical work focused
on showing convergence of the finite particle EnKF, as implemented in practice, to
the MFEnKF limit \cite{LeGland11large,mandel2011convergence}, in the linear-Gaussian setting where the MFEnKF distribution coincides with the Kalman filtering distribution.
Recent work has connected EnKF and the MFEnKF to the optimal filter \cite{CarHofStuVaes24,calvello2026accuracy} 
for problems close to linear and Gaussian.
However, these papers all consider finite time horizon results. Outside of the linear-Gaussian setting, works on the large-time behavior of the EnKF or MFEnKF are scarce. For fully observed systems --i.e. $H=I_d$ -- Kelly, Law, and Stuart~\cite{KellyLawStu14} established the well-posedness and accuracy (in the sense of a state estimator) of the MFEnKF, while De Wiljes, Reich, and Stannat studied the long-time stability and accuracy (also in the sense of a state estimator)  of the EnKF in continuous time~\cite{DeWilReiStannat18long-time}. Majda, Tong, and Kelly developed a stability theory for EnKF and related filters in terms of the observability energy criterion~\cite{MajdaTongKelly16,MajdaTongKelly15cov_inflation}, which appears to be difficult to verify outside of fully observed systems. The quantitative stability results for linear signal and observation processes in the interesting work of Del Moral and Tugaut~\cite{DelMoralTugaut18stability} seem most similar in spirit to our work, but they only consider the continuous-time setting, and the nature of their results differs from ours in that we focus on pathwise strictly contractive bounds with non-Gaussian initial data; see Remark~\ref{rmk:DelMoral} for a more detailed discussion. For a comprehensive overview on the theoretical aspects of EnKF, we refer the reader to the recent survey by Bishop and Del Moral~\cite{BishopDelMoral23survey}.

Our work is inspired in part by the work of Carrillo and Vaes~\cite{CarVaes21EKI} on the continuous-time mean-field ensemble Kalman sampler, an algorithm arising in the context of Bayesian inverse problems~\cite{G-IHWS20EKI}. In the setting where the Bayesian posterior distribution is Gaussian but the initial data need not be, Carrillo and Vaes establish an exponential convergence rate in the classical 2-Wasserstein distance. Burger, Erbar, Hoffmann, Matthes, and Schlichting~\cite{BEHMS25CovOT} provided a detailed analysis of the covariance-modulated optimal transport distance, with respect to which the ensemble Kalman sampler is a gradient flow, and refined the convergence analysis. In fact, the matrix-weighted Wasserstein distance we use in this paper can be thought of as a simpler relative of the covariance-modulated optimal transport distance, where the simplicity allows for an easier study of the geodesic and related convexity properties. The relationship between the ensemble Kalman sampler and EnKF stems from a similar way of approximating the gradient of the observation map in the inverse problem step, resulting in a covariance pre-conditioned gradient in the linear case. However, unlike the ensemble Kalman sampler, the ensemble Kalman filter does not have a natural gradient flow structure, and in general exhibits more complicated dynamics; EnKF accounts for both the signal dynamics and the Bayes update, and need not stabilize to an equilibrium even when the filter is asymptotically stable in the initial data. The potential to employ covariance-weighted Wasserstein distances in the study of ensemble Kalman filters was first noted by Reich and Cotter~\cite{reich2015probabilistic}. 

Preliminary results on the matrix-weighted Wasserstein distances established in Section~\ref{ssec:W2M} rely on standard arguments in the classical theory of optimal transport and gradient flows; we do not attempt to capture the history of these fields here, but we mention a few most relevant works.
In their seminal work, Jordan, Kinderlehrer, and Otto \cite{JKO98} observed that the Fokker-Planck equation can be obtained as a limit of an iterative variational scheme in the Wasserstein space. This scheme is an instance of the minimizing movement scheme due to De Giorgi~\cite{DeGiorgi93}, inspired by the work of Almgren, Taylor, and Wang~\cite{ATW93}. McCann identified the geodesics in the Wasserstein space via displacement interpolation and introduced the notion of displacement convexity of energy functionals~\cite{McCann97}, whereas Otto and Villani related the geodesic convexity and various functional inequalities were studied by Otto and Villani~\cite{OttoVillani00}. Ambrosio, Gigli, and Savar\'{e}~\cite{AGS} synthesized a general and robust theory of gradient flows in metric spaces, with refined results on the Wasserstein gradient flows.

As our study of the matrix-weighted Wasserstein distances is motivated by the need to approximate the Kullback-Leibler divergence in anisotropic settings, we mention a few related works. In order to justify the use of covariance as the matrix weight, we use the observation made by Chewi and Pooladian~\cite{chewi2023entropic} on covariance matrices via Brascamp-Lieb~\cite{BrascampLieb76} inequality and Cramer-Rao bounds~\cite{Cramer46}. Bregman transport cost studied by Cordero-Erausquin~\cite{Cordero17} and later by Chewi and Ahn~\cite{AhnChewi21Bregman} provides a more general and accurate approximation of the KL-divergence than by covariance-weighted Wasserstein distances; see Remark~\ref{rmk:BregmanT}. Recent works of Shenfeld~\cite{Shenfeld24matrix} and Aishwarya, Rotem, and Shenfeld~\cite{AiRoShenfeld25} introduce and study the matrix displacement convexity, which is an anisotropic generalization of displacement convexity.

\section{A Variational Derivation of the Mean-Field EnKF}\label{sec:EnKF;var_der}

In this section, we derive the mean-field ensemble Kalman filter in connection to the optimal filter. As the MFEnKF differs from the optimal filter in the analysis step, the signal dynamics play no role in this section. Consequently, the results of this section apply regardless of the linearity of the signal process. 

In Section~\ref{ssec:W2M} we first introduce matrix-weighted 2-Wasserstein distances and state two basic results: Proposition~\ref{prop:funct_ineq;W2M}, which comments on the relationship of the distances with the Kullback-Leibler divergence, and Lemma~\ref{lem:EL;potential} on an associated Euler-Lagrange equation. We delay proofs to Appendix~\ref{app:W2M}, as they are technical and not relevant to the remainder of this manuscript.

Building on these results, Section~\ref{ssec:bayes;approx} first motivates the use of covariance-weighted Wasserstein distance in the variational problem approximating the Bayes update. Then we establish the main result of this section, Theorem~\ref{thm:MFEnKF;var}, which states that in the case of linear observation, the said variational problem has a unique solution, and that the solution coincides with the analysis step of the MFEnKF. From Lemma~\ref{lem:EL;potential} readers might already expect that this is the case, but the uniqueness of the minimizer will require some care making use of the strict convexity of the variational problem.

In Section~\ref{ssec:disc-cts}, we describe the recursive process of the MFEnKF including the forecast step. We focus on exploring the connection between the discrete and continuous-time MFEnKF, where the former is a time discretization of the latter. Clarifying the connection between the discrete and continuous-time dynamics will not only help compare the results in the two settings in Section~\ref{sec:linear}, but also make the gradient flow structure of the analysis step more apparent.

\subsection{Matrix-weighted 2-Wasserstein distances}\label{ssec:W2M}
Given a nonnegative cost function $c:\R^d\times\R^d\rightarrow\R_+$, define the optimal transport cost $\OT_c:\P(\R^d)\times\P(\R^d)\rightarrow\R_+$ by
\begin{equation}\label{def:OT}
        \OT_c(\mu,\nu) :=  \inf_{\gamma\in\Pi(\mu,\nu)}\iint_{\R^d\times \R^d} c(x,y) \,d\gamma(x,y)
\end{equation}
where $\Pi(\mu,\nu)$ is set of couplings of $\mu,\nu$, namely if $\id^1,\id^2:\R^d\times\R^d\rightarrow\R^d$ are the projection maps to the first and second $d$-dimensional coordinates,
\begin{equation}\label{def:Gamma}
    \Pi(\mu,\nu)=\left\{\gamma\in\P(\R^d\times\R^d):\;\id^1_\#\gamma=\mu \text{ and } \id^2_\#\gamma=\nu\right\}.
\end{equation}
When the cost function is given by the square of the Euclidean norm $c(x,y)=|x-y|^2$, $\OT_c(\mu,\nu)=W_2^2(\mu,\nu)$ where $W_2$ is the well-known 2-Wasserstein distance.

The matrix-weighted 2-Wasserstein distance we use is an anisotropic generalization of $W_2$. Fix a symmetric positive definite matrix $M\in\R^{d\times d}_{++}$. Then define the matrix-weighted 2-Wasserstein distance $W_{2}(\cdot,\cdot;M):\P_2(\R^d)\times\P_2(\R^d)\rightarrow \R_+$, which we will often abbreviate as $W_{2,M}$, by
\begin{equation}\label{def:W2M}
    W_2(\mu,\nu;M)=\inf_{\gamma\in\Pi(\mu,\nu)}\left(\iint_{\R^d\times\R^d} |x-y|_M^2\;d\gamma(x,y)\right)^{1/2},
\end{equation}
where $\Pi(\mu,\nu)$ denotes the set of all couplings of $\mu,\nu$. 
We denote by $\Pi_{o}(\mu,\nu;M)$ the set of optimal couplings of $\mu,\nu$ for the problem \eqref{def:W2M}
\begin{equation}\label{def:Pio;M}
    \Pi_{o}(\mu,\nu;M)=\left\{\gamma\in\Pi(\mu,\nu):\;\iint_{\R^d\times\R^d}|x-y|_M^2\,d\gamma(z,\tilde z)=W_2^2(\mu,\nu;M)\right\}.
\end{equation}
From the standard theory of optimal transport it follows that $\Pi_o(\mu,\nu;M)$ is nonempty for any $M\in\R^{d\times d}_{++}$ and $\mu,\nu\in\P_2(\R^d)$. When there exists $T_{\mu}^{\nu}:\R^d\rightarrow\R^d$ such that $(\id\times T_{\mu}^{\nu})_\#\mu\in\Pi_o(\mu,\nu;M)$, where $\id:\R^d\rightarrow\R^d$ is the identity map on $\R^d$, we say $T_{\mu}^{\nu}$ is an $M$-weighted optimal transport map from $\mu\in\P_2(\R^d)$ to $\nu\in\P_2(\R^d)$.  Moreover, we can use the optimal transport maps (or more generally optimal transport plans) to construct constant-speed geodesic $(\mu_t)_{t\in[0,1]}$ such that $\mu_0=\mu$, $\mu_1=\nu$ and
\[W_2(\mu,\mu_t;M)=t W_2(\mu,\nu;M).\]

The matrix-weighted Wasserstein distance \eqref{def:W2M} is motivated by the need to better capture the convexity of energy functionals that may be anisotropic. An energy functional $\Ecal:\P_2(\R^d)\rightarrow(-\infty,+\infty]$ is said to be $\lambda$-geodesically convex along $W_2$-geodesics if the constant-speed $W_2$-geodesic $(\mu_t)_{t\in[0,1]}$ between any $\mu,\nu\in\P_2(\R^d)$ satisfies
\begin{equation}\label{def:W2_geocvx}
    \Ecal(\mu_t)\leq (1-t)\Ecal(\mu)+t\Ecal(\nu)-\frac{\lambda t(1-t)}{2}W_2^2(\mu,\nu).
\end{equation}
For instance, functionals $\Ecal$ of the form $\mu\mapsto\int_{\R^d} V(x)\;d\mu(x)$ with $\nabla^2 V\succeq \lambda I_d$ satisfy \eqref{def:W2_geocvx}. However, as $\lambda$ is a lower bound for the smallest eigenvalue of the Hessian of $V$, it does not adequately capture the convexity of $V$ when it is highly anisotropic. We would like to use the matrix-weighted Wasserstein distances to improve in this regard.

Now consider a probability measure $\pi(dx)\propto e^{-U(x)}\;dx$ with $\nabla^2 U\succeq M^{-1}$. It is well known that we can write $\kl(\cdot||\pi)$ as
\[
    \kl(\mu||\pi)=\int_{\R^d}U(x)\;d\mu(x)+ \Ent(\mu) +\const.
\]
As the modulus of geodesic convexity is additive, Lemmas~\ref{lem:potential;cvx}-\ref{lem:entropy;cvx} imply that $\kl(\cdot||\pi)$ is $1$-convex along $W_{2,M}$-geodesics. Thus we can deduce the following anisotropic version of Talagrand's transport inequality~\cite{OttoVillani00,Talagrand96T2}.
\begin{proposition}\label{prop:funct_ineq;W2M}
Let $U\in \C^2(\R^d)$ satisfy $\nabla ^2 U\geq M^{-1}$ for some $M\in\R^{d\times d}_{++}$, and let $\pi(dx)\propto e^{-U(x)}dx$ be a probability measure with bounded second moments. Then $\kl(\cdot||\pi):\P_2(\R^d)\rightarrow(-\infty,+\infty]$ is $1$-convex along $W_{2,M}$-geodesics --i.e. for any constant speed $W_{2,M}$-geodesic $(\mu_t)_{t\in[0,1]}$ from $\mu_0$ to $\mu_1$, both in $\P_2(\R^d)$, 
\begin{equation}\label{eq:KL;cvx}
    \kl(\mu_t||\pi)\leq (1-t)\kl(\mu_0||\pi)+t\kl(\mu_1||\pi)-\frac{t(1-t)}{2}W_2^2(\mu_0,\mu_1;M).
\end{equation}
Moreover, the Kullback-Leibler divergence satisfies the analogue of Talagrand's transport inequality
\begin{equation}\label{eq:Talagrand;W2M}
    \frac12 W_2^2(\mu,\pi;M)\leq \kl(\mu||\pi) \text{ for any }\mu\in\P_2(\R^d),
\end{equation}
and the HWI-type inequality
\begin{equation}\label{eq:HWI;W2M}
    \kl(\mu||\pi)\leq W_{2}(\mu,\pi;M)\sqrt{I_M(\mu||\pi)}-\frac{1}{2}W_2^2(\mu,\pi;M) \text{ for any }\mu\in\P_2(\R^d),
\end{equation}
where $I_M(\cdot||\pi)$ is the $M$-weighted Fisher Information
\begin{equation}\label{def:IM}
    I_M(\mu||\pi):=\int_{\R^d} \left|M\nabla \log\frac{d\mu}{d\pi}\right|^2\;d\pi.
\end{equation}
In particular, \eqref{eq:HWI;W2M} implies the log-Sobolev-type inequality
\begin{equation}\label{eq:LSI;W2M}
    \kl(\mu||\pi)\leq \frac{1}{2}I_M(\mu||\pi) \text{ for all }\mu\in\P_2(\R^d).
\end{equation}
\end{proposition}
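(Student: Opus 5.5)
The natural route is a linear change of variables that turns $W_{2,M}$ into the ordinary $W_2$, after which everything follows from the classical Euclidean results. Set $L:=M^{-1/2}$ and $\Lambda(x):=Lx$. Then $|x-y|_M^2=|\Lambda x-\Lambda y|^2$, so for $\mu,\nu\in\P_2(\R^d)$ we have $W_2(\mu,\nu;M)=W_2(\Lambda_\#\mu,\Lambda_\#\nu)$. The map $\gamma\mapsto(\Lambda\times\Lambda)_\#\gamma$ is a bijection between $\Pi(\mu,\nu)$ and $\Pi(\Lambda_\#\mu,\Lambda_\#\nu)$, and it maps optimal couplings to optimal couplings. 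Hence constant-speed $W_{2,M}$-geodesics are exactly the preimages under $\Lambda$ of $W_2$-geodesics. Relative entropy is invariant under the bijection $\Lambda$: $\kl(\mu||\pi)=\kl(\Lambda_\#\mu||\Lambda_\#\pi)$. Moreover $\Lambda_\#\pi\propto e^{-\tilde U}$ with $\tilde U(z)=U(M^{1/2}z)$ and $\nabla^2\tilde U=M^{1/2}\nabla^2U\,M^{1/2}\succeq I$. So $\tilde\pi:=\Lambda_\#\pi$ is $1$-uniformly log-concave in the usual sense.

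\textbf{Step 1 (convexity).} By McCann/Otto--Villani, $\kl(\cdot||\tilde\pi)=\int\tilde U\,d\cdot+\Ent(\cdot)+\const$ is $1$-convex along $W_2$-geodesics. Since the potential part is $1$-convex and the entropy part is $0$-convex, this is equivalently Lemmas~\ref{lem:potential;cvx}--\ref{lem:entropy;cvx} with additivity of the moduli. Pulling back by $\Lambda$ gives \eqref{eq:KL;cvx}. If $\kl(\mu_i||\pi)=\infty$ the inequality is trivial.

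\textbf{Step 2 (Talagrand).} Apply \eqref{eq:KL;cvx} to the geodesic from $\mu_0=\pi$ to $\mu_1=\mu$. Using $\kl(\pi||\pi)=0$ and $\kl(\mu_t||\pi)\ge0$ gives $0\le t\,\kl(\mu||\pi)-\tfrac{t(1-t)}2W_{2,M}^2$. Dividing by $t$ and letting $t\to0$ yields \eqref{eq:Talagrand;W2M}.

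\textbf{Step 3 (HWI).} This is the main technical step. Take the geodesic from $\mu_0=\mu$ to $\mu_1=\pi$ and rearrange \eqref{eq:KL;cvx} to
\[
\kl(\mu||\pi)\le \frac{\kl(\mu||\pi)-\kl(\mu_t||\pi)}{t}-\frac{1-t}{2}W_{2,M}^2 .
\]
As $t\downarrow0$ we need $\liminf$ of the difference quotient to be at most $-\frac{d}{dt}\kl(\mu_t||\pi)|_{0^+}$. By the first-variation formula in the transformed coordinates, this equals
\[
-\int\langle\nabla\log\tfrac{d\tilde\mu}{d\tilde\pi}(z),\,T(z)-z\rangle\,d\tilde\mu ,
\]
where $T$ is the optimal map, which exists by Brenier when $\mu\ll\Leb^d$; we may assume $\kl(\mu||\pi)<\infty$. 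Cauchy--Schwarz bounds this by $W_2(\tilde\mu,\tilde\pi)\,\|\nabla\log\frac{d\tilde\mu}{d\tilde\pi}\|_{L^2(\tilde\mu)}$. Transforming back, $\nabla_z=M^{1/2}\nabla_x$, so the second factor is the $M$-weighted Fisher information $I_M$ of \eqref{def:IM}, up to the normalization and choice of reference measure used there. I expect the justification of this derivative bound to be the real obstacle: it needs a regularity argument for general $\mu$. I would either cite the above-tangent inequality for $\lambda$-convex functionals from Ambrosio--Gigli--Savar\'e, or simply invoke the classical Euclidean HWI inequality of Otto--Villani for $\tilde\pi$ and pull it back. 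The latter is cleaner.

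\textbf{Step 4 (log-Sobolev).} By Young, $W\sqrt I-\tfrac12W^2\le\tfrac12 I$, so \eqref{eq:LSI;W2M} follows from \eqref{eq:HWI;W2M}.
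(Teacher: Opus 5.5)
Your proof is correct, but it takes a different route from the paper. The paper works intrinsically in $(\P_2(\R^d),W_{2,M})$. It reproves potential-energy convexity by integrating a pointwise inequality over an $M$-optimal coupling. It then re-runs the Ambrosio--Gigli--Savar\'e entropy-convexity argument for $M$-weighted optimal maps, checking that their approximate differentials are positive for the new cost. Talagrand is derived exactly as in your Step~2. HWI comes from the slope representation for $1$-convex functionals (AGS, Theorem~2.4.9), together with a cited identification of the $W_{2,M}$-slope of $\kl(\cdot||\pi)$ with the Fisher information.

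You instead observe that $\Lambda=M^{-1/2}$ induces an isometry $(\P_2,W_{2,M})\to(\P_2,W_2)$. This isometry preserves $\kl$ and turns $\nabla^2U\succeq M^{-1}$ into $\nabla^2\tilde U\succeq I$. All four inequalities are then literally the classical Euclidean ones in whitened coordinates. Your final choice, citing Otto--Villani HWI for $\tilde\pi$ and pulling back, is complete and sidesteps the regularity issue you flagged in the first-variation sketch.

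What your route buys is that nothing needs re-verifying for a new cost. It also makes transparent that a constant weight involves nothing genuinely anisotropic; the same whitening gives Lemma~\ref{lem:EL;potential} and the generalized-geodesic convexity used in Theorem~\ref{thm:MFEnKF;var} essentially for free. What the paper's route buys is that it is phrased via geodesic convexity and metric slopes rather than a global linear change of variables. It is therefore the template that could extend to position- or measure-dependent weights, such as the covariance-modulated or Bregman costs the paper mentions, where no such isometry exists.

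One point to tighten: your change of variables pins down exactly which Fisher information appears. It is $\int|M^{1/2}\nabla\log\tfrac{d\mu}{d\pi}|^2\,d\mu$, the squared $W_{2,M}$-slope, and you should state the result with this quantity rather than ``up to normalization and reference measure.'' The displayed definition of $I_M$ (Euclidean norm of $M\nabla\log\tfrac{d\mu}{d\pi}$, integrated against $\pi$) is not equivalent, and with it the inequalities can fail. Take $\pi=N(0,\sigma^2)$, $M=\sigma^2$ and $\mu=N(a,\sigma^2)$. Then $\kl(\mu||\pi)=a^2/(2\sigma^2)$, while that expression equals $a^2$, so the log-Sobolev inequality fails for $\sigma<1$. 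With your form, $\tfrac12 I_M=a^2/(2\sigma^2)$, and both LSI and HWI hold with equality.
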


\begin{remark}\label{rmk:funct_ineq}
    Combining \eqref{eq:Talagrand;W2M}-\eqref{eq:HWI;W2M}, we obtain
    \[\frac12 W_2^2(\mu,\pi;M)\leq \kl(\mu||\pi)\leq \frac12 W_2(\mu,\pi;M)\left(2\sqrt{I_M(\mu||\pi)}-W_2(\mu,\nu;M)\right).\]
    The inequalities suggest that the approximation of $\kl(\mu||\pi)$ by $W_{2,M}(\mu,\pi)$ becomes less accurate as $\sqrt{I_M(\mu||\pi)}$ deviates further from $W_2(\mu,\nu;M)$. Roughly speaking, this can happen in at least two different ways: by $\pi$ becoming `flatter' and $\mu$ becoming `sharper'. Firstly, note that as $M$ grows larger, $I_M$ becomes larger but $W_{2,M}$ smaller. Thus, when $\pi$ is flat, $M$ has to be quite large and the gap becomes bigger. Also, when $\pi$ far away from Gaussian --i.e. the Hessian of its (negative) logarithm varies largely depending on $x$ -- the lower bound with a constant matrix $M^{-1}$ can only become poorer. Secondly, as the Fisher information $I_M$ sees the first derivatives of the log-density whereas $W_{2,M}$ behaves like $\dot H^{-1}$-norm~(see \cite{RPeyre18}), the gap grows larger as more irregular $\mu$ becomes. 
\end{remark}

We would like to derive the analysis step of the MFEnKF as the solution of the implicit Euler scheme in the form
\[\pi_{n}\in\argmin_{\mu\in\P_2(\R^d)}\frac{W_{2}^2(\mu,\hat\pi_{n};M)}{2}+\frac12\int_{\R^d}V(x;y_{n}^\dag)\;d\mu(x),\]
where $V(\cdot;y_n^\dag)$ is given by the negative log-likelihood for the data $y_n^\dag$ given $x$. To this end, the following characterization of minimizers will be useful.
\begin{lemma}[Euler-Lagrange equation for potential energy]\label{lem:EL;potential}
    Let $V\in \C^1(\R^d)$ be semi-convex, that is, $x\mapsto V(x)+\lambda|x|^2/2$ is convex for some $\lambda\in\R$. Fix $M\in\R^{d\times d}_{++}$ and $\pi\in\P_2(\R^d)$, and let $\pi^\ast$ be a minimizer of the following variational problem
    \begin{equation}\label{eq:JKO;V}
        \pi^\ast \in\argmin_{\mu\in\P_2(\R^d)} \frac12 W_{2,M}^2(\mu,\pi)+\int_{\R^d} V(x)\;d\mu(x).
    \end{equation}
    Then the $W_{2,M}$-optimal transport map $T_{\pi^\ast}^{\pi}$ from $\pi^\ast$ to $\pi$ exists, and is given by
    \begin{equation}\label{eq:EL;potential}
        T_{\pi^\ast}^{\pi}(x)-x=M\nabla V(x).
    \end{equation}
\end{lemma}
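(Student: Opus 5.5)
The plan is to reduce to the Euclidean case by a linear change of variables and then run the standard JKO first-variation argument. Set $L:=M^{-1/2}$ and push forward by $x\mapsto Lx$. Then $|x-y|_M^2=|Lx-Ly|^2$, so $W_{2,M}(\mu,\nu)=W_2(L_\#\mu,L_\#\nu)$, and couplings and maps correspond bijectively. The potential becomes $\tilde V(z):=V(L^{-1}z)$, which is again $\C^1$ and semi-convex, since $L^{-1}$ is linear and invertible. Thus $\tilde\pi^\ast:=L_\#\pi^\ast$ minimizes $\frac12W_2^2(\cdot,\tilde\pi)+\int\tilde V\,d(\cdot)$ with $\tilde\pi:=L_\#\pi$. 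If we show that the $W_2$-optimal map from $\tilde\pi^\ast$ to $\tilde\pi$ is $\tilde T(z)=z+\nabla\tilde V(z)$, then $T=L^{-1}\circ\tilde T\circ L$. Using $\nabla\tilde V(z)=L^{-1}\nabla V(L^{-1}z)$, this gives $T(x)=x+L^{-2}\nabla V(x)=x+M\nabla V(x)$, which is \eqref{eq:EL;potential}.

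In the Euclidean setting I would proceed in three steps.

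\emph{Step 1: first variation of the potential.} Take any optimal plan $\gamma\in\Pi_o(\tilde\pi^\ast,\tilde\pi)$, any $\xi\in \C_c^\infty(\R^d;\R^d)$, and set $\mu_\eps:=(\id+\eps\xi)_\#\tilde\pi^\ast$. The plan $\gamma_\eps:=((\id^1+\eps\xi\circ\id^1),\id^2)_\#\gamma$ is admissible for the pair $(\mu_\eps,\tilde\pi)$. Hence
\[
\tfrac12W_2^2(\mu_\eps,\tilde\pi)\le \tfrac12\iint|x+\eps\xi(x)-y|^2\,d\gamma .
\]
For the potential term, $V\in\C^1$ together with semi-convexity bounds the difference quotients of $\tilde V$ from below. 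Note that $\nabla\tilde V$ is locally bounded and $\xi$ has compact support, so dominated convergence applies.

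\emph{Step 2: the Euler--Lagrange identity.} Minimality gives
\[
0\le \eps\iint\langle x-y,\xi(x)\rangle\,d\gamma+\eps\int\langle\nabla\tilde V,\xi\rangle\,d\tilde\pi^\ast+o(\eps).
\]
Taking $\eps\to0^\pm$ yields
\[
\iint\langle x-y+\nabla\tilde V(x),\xi(x)\rangle\,d\gamma=0 \quad\text{for all } \xi .
\]
Disintegrating $\gamma=\tilde\pi^\ast\otimes\gamma_x$, this says that the barycenter satisfies $\int y\,d\gamma_x(y)=x+\nabla\tilde V(x)$ for $\tilde\pi^\ast$-a.e. $x$.

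\emph{Step 3: the plan is induced by a map.} This is the main obstacle: the barycenter identity alone does not show that $\gamma$ is concentrated on a graph, and $\tilde\pi^\ast$ need not be absolutely continuous, so Brenier's theorem cannot be invoked directly. I would argue that, since the barycentric projection $\bar T(x)=x+\nabla\tilde V(x)$ is measurable, the plan $(\id,\bar T)_\#\tilde\pi^\ast$ would be the natural candidate; but its second marginal is generally not $\tilde\pi$, so comparison must happen on the other side. Instead, use the semi-convexity. The function $\phi(x):=\frac12|x|^2+\tilde V(x)$ has the property that $\phi+\frac{\lambda}{2}|x|^2$ is convex. The optimal plan is supported on the subdifferential of a convex function $\psi$ (Knott--Smith/Brenier), i.e. $y\in\partial\psi(x)$. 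Combining this with the variational inequality, now tested with perturbations of the form $x\mapsto x+\eps\xi(x,y)$ that split mass, should force $\gamma_x$ to be a Dirac mass. Concretely, I would redo Step 1 with plan-dependent perturbations $\mu_\eps:=(\id^1+\eps\zeta)_\#\gamma$ for $\zeta\in\C_c(\R^{2d};\R^d)$. The same computation, using $\C^1$ regularity of $\tilde V$ at $x$, gives
\[
\iint\langle x-y+\nabla\tilde V(x),\zeta(x,y)\rangle\,d\gamma=0 \quad\text{for all } \zeta .
\]
Hence $y=x+\nabla\tilde V(x)$ for $\gamma$-a.e. $(x,y)$, so $\gamma=(\id\times\tilde T)_\#\tilde\pi^\ast$ with $\tilde T=\id+\nabla\tilde V$. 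The map therefore exists and is optimal, being induced by an optimal plan. The delicate points to check are the domination needed to differentiate $\int\tilde V\,d\mu_\eps$ under $\C^1$ plus semi-convexity with only second moments finite. This can be handled by compactly supported $\zeta$ and the local boundedness of $\nabla\tilde V$. Transforming back via $L$ then completes the proof.
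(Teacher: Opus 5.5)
Your proposal is correct, and its decisive part is exactly the paper's argument: the plan-dependent perturbation $\mu_\eps=(\id^1+\eps\zeta)_\#\gamma$ is compared against the competitor plan $((\id^1+\eps\zeta),\id^2)_\#\gamma$, and $\eps\to0^\pm$ then gives $y-x=M\nabla V(x)$ for $\gamma$-a.e.\ $(x,y)$; the paper does the same with $T(x,y)=x+\eps v(x,y)$, $v\in L^2(\gamma;\R^d)$. The reduction to $M=I$ via $L=M^{-1/2}$ is only cosmetic (the paper works directly in $\langle\cdot,\cdot\rangle_M$), Steps 1--2 and the Knott--Smith/Brenier remark can be dropped because the plan-dependent variation already gives the graph property, and your compactly supported $\zeta$ is if anything a slightly cleaner way to justify the $o(\eps)$ expansion.
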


\begin{remark}\label{rmk:BregmanT}
    Given a potential $U:\R^d\rightarrow\R$, Cordero-Erasquin~\cite{Cordero17} showed that the Bregman transport cost satisfies
    \[\D_U(\mu,\nu):=\inf_{\gamma\in\Pi(\mu,\nu)}\iint_{\R^d\times\R^d}\left(U(x)-U(y)-\nabla U(x)\cdot(x-y)\right)\;d\gamma(x,y)\]
    satisfies, for $\pi(dx)\propto \exp(-U(x))\;dx$
    \[\D_U(\mu,\pi)\leq \kl(\mu||\pi);\]
    see also Ahn and Chewi~\cite{AhnChewi21Bregman}. In general, the above should provide a tighter approximation than~\eqref{eq:Talagrand;W2M} that makes use of the local curvature information of $U$, and does not require one to choose $M$.

    As $\D_U$ is an optimal transport cost with asymmetric cost function $c(x,y)=U(x)-U(y)-\nabla U(x)(y-x)$ (which is nonnegative when $U$ is convex), rigorously establishing the Euler-Lagrange equation for
    \[\pi^\ast\in\argmin_{\mu\in\P_2(\R^d)}\D_U(\mu,\pi)+\int_{\R^d} V(x)\;d\mu(x)\]
    would require more technical work. However, when $\pi^\ast$ is close to $\pi$, which is the case when the variational problem comes from a time discretization of a continuous-time problem, 
    \[U(x)-U(y)-\nabla U(x)(y-x)\approx \frac12(x-y)^T\nabla U(x)(x-y)\]
    thus we can formally expect
    \[T_{\hat\pi^\ast}^\pi(x)-x\approx \left[\nabla^2U(x)\right]^{-1}\nabla V(x).\]
\end{remark}

\subsection{A Variational Approximation of the Bayes Update}\label{ssec:bayes;approx}

Let us now turn to the problem of approximating the Bayes update. We will drop the time index $n$ in this section, denoting the prior by $\hat\pi$, the true Bayes posterior by $\bar\pi$, and its the MFEnKF-type approximation by $\pi$, all in $\P_2(\R^d)$. Suppose the observation $Y$ is related to state $\hat X\sim \hat\pi$ by
\begin{equation}\label{def:Y=hx+eta}Y=H\hat X+\eta,\end{equation}
where $\eta\sim N(0,\Gamma)$ for some $\Gamma\in\R^{k\times k}_{++}$. In particular, the likelihood of $Y=y$ given $\hat X$ is proportional to $\exp\left(-\frac12|y-Hx|_{\Gamma}^2\right)$. Thus, given the observation $Y=y^\dag$, the Bayes update defines the posterior as
\begin{equation}\label{eq:posterior}
    \bar\pi(dx):=\B(\hat\pi;y^\dag)=\frac{\exp\left(-\frac12|y-Hx|_\Gamma^2\right)\hat\pi(dx)}{\int_{\R^d}\exp\left(-\frac12|y-Hx'|_\Gamma^2\right)\hat\pi(dx')}.
\end{equation}
As is well known in Variational Bayesian methods, $\bar\pi$ can be characterized as 
\begin{equation}\label{eq:posterior;variational}
\bar\pi=\argmin_{\mu\in\P_2(\R^d)} \kl(\mu|\hat\pi)+\frac12\int_{\R^d}|h(x)-y^\dag|_\Gamma^2\;d\mu(x).
\end{equation}

We will show in this section that the analysis step of the MFEnKF corresponds to the following variational problem:
\begin{equation}\label{eq:W2Mpost;variational}
    \pi\in\E^{\eta}\left[\argmin_{\mu\in\P_2(\R^d)}\frac{1}{2}W_2^2(\mu,\hat\pi;\Cov(\hat\pi))+\frac12\int_{\R^d}|Hx+\eta-y^\dag|_\Gamma^2\;d\mu(x)\right] \text{ where }\eta\sim N(0,\Gamma).
\end{equation}
More precisely, we can define the minimizer for each realization of $\eta$
\begin{equation}\label{eq:W2Mpost;variational;eta}
    \pi_{\eta}\in\argmin_{\mu\in\P_2(\R^d)}\frac{1}{2}W_2^2(\mu,\hat\pi;\Cov(\hat\pi))+\frac12\int_{\R^d}|Hx+\eta-y^\dag|_\Gamma^2\;d\mu(x),
\end{equation}
and define $\pi$ as the expectation
\begin{equation}\label{eq:pi=avgpi^eta}
\pi(\varphi):=\E^{\eta}[\pi_{\eta}(\varphi)]=\E^\eta[\varphi(T_{\hat\pi}^{\pi_{\eta}}(\hat X))] \text{ for each } \varphi\in\C_b(\R^d);
\end{equation}
here, $T_{\hat\pi}^{\pi_{\eta}}$ is the $\Cov(\hat\pi)$-weighted optimal transport map from $\hat\pi$ to $\pi_{\eta}$. Of course, the existence and uniqueness of the minimizer $\pi_\eta$ in \eqref{eq:W2Mpost;variational;eta} must be verified to ensure that $\pi$ is well defined; this is the main content of Theorem~\ref{thm:MFEnKF;var}. 

Comparing \eqref{eq:W2Mpost;variational} with \eqref{eq:posterior;variational}, we notice two differences. Firstly, $\kl(\mu||\hat\pi)$ has been replaced with $\frac12W_2^2(\mu,\hat\pi;\Cov(\hat\pi))$. Secondly, observation noise $\eta$ has been added in the negative log-likelihood function.

The reason for the first modification, in particular the choice of $M=\Cov(\hat\pi)$ is as follows. We have established in Proposition~\ref{prop:funct_ineq;W2M} that, if $\hat\pi(dx)\propto\exp(-\hat U(x))\;dx$ and $\nabla^2\hat U(x)\succeq M^{-1}\succ 0$, then
\[\frac12 W_2^2(\mu,\hat\pi;M)\leq \kl(\mu||\hat\pi)\leq \frac12 W_2(\mu,\hat\pi;M)\left(\sqrt{I_M(\mu||\hat\pi)}-W_2^2(\mu,\hat\pi;M)\right).\]
Thus we would like to $M$ to be a good approximation of $[\nabla^2\hat U(x)]^{-1}$.  

In case $\hat U\in\C^2(\R^d)$ is strictly convex, the Cramer-Rao~\cite{Cramer46,Rao45} and the Brascamp-Lieb inequalities~\cite{BrascampLieb76} state (see \cite[Lemma 2]{chewi2023entropic} for the statement in the form below)
\begin{equation}\label{eq:BL+CR;hatC}
    \left[\int \nabla^2 \hat U(x) \;d\hat\pi(x)\right]^{-1} \leq \Cov(\hat\pi) \leq  \int \left[\nabla^2 \hat U(x)\right]^{-1}\;d\hat\pi(x);
\end{equation}
the Cramer-Rao bound on the left does not require convexity of $\hat U$. This suggests that $M:=\Cov(\hat\pi)$ is a reasonable choice. This choice also has an additional computational advantage, as covariance matrices can be effectively approximated based on the ensemble of particles via the sample covariance. 
Of course, \eqref{eq:BL+CR;hatC} is an equality when $\hat\pi$ is a Gaussian, thus we can expect the approximation of $\kl(\cdot|\hat\pi)$ by $\frac12 W_2^2(\cdot,\hat\pi;\Cov(\hat\pi))$ hence the approximation of the variational problem \eqref{eq:posterior;variational} with the substitution to be more accurate.

The second modification ensures consistency with the observation model, given that the transport metric, unlike the KL-divergence, contains particle-wise correspondence information. More precisely, let $\hat x=\hat x(x_{\eta})=T_{\pi_\eta}^{\hat\pi}(x_{\eta})$. Then, by the Euler-Lagrange equation \eqref{eq:EL;potential}, and let
\[\hat x=x_{\eta}+\Cov(\hat\pi)H^T\Gamma^{-1}(Hx_{\eta}+\eta-y^\dag).\]
If $\eta=0$, given observation $y^\dag$ and the `posterior particle location' $x_{\eta}$, the corresponding particle configuration $\hat x$ can be precisely recovered. Of course, this should not be possible; as we see in the definition of the observation variable \eqref{def:Y=hx+eta}, even with the observation $y^\dag$ we should be able to recover $H\hat x$ ,let alone $\hat x$, only up to the corruption by additive noise $\eta\sim N(0,\Gamma)$. Note that this is due to the implicit and Lagrangian nature of the Euler-Lagrange equation associated to \eqref {eq:W2Mpost;variational;eta}; in particular, the Bayes formula does not retain such particle-wise information, thus the same argument does not apply the true posterior update \eqref{eq:posterior;variational}.
\medskip

We now establish the main result of this section.

\begin{theorem}\label{thm:MFEnKF;var}
    Let $H\in\R^{k\times d}$ and $\hat\pi\in\P_2(\R^d)$ with $\Cov(\hat\pi)\succ 0$. For each realization of the observation $y^\dag$ and observational noise $\eta\in N(0,\Gamma)$, the variational problem
    \begin{equation}\label{eq:W2Mpost;variational;lin}
    \argmin_{\mu\in\P_2(\R^d)}\frac{1}{2}W_2^2(\mu,\hat\pi;\Cov(\hat\pi))+\frac12\int_{\R^d}|Hx+\eta-y^\dag|_\Gamma^2\;d\mu(x)
    \end{equation}
    has a unique solution $\pi_{\eta}\in\P_2(\R^d)$. Moreover, the $\Cov(\hat\pi)$-weighted optimal transport map $T_{\hat\pi}^{\pi_{\eta}}$ from $\hat\pi$ to $\pi_{\eta}$ is given by
    \begin{equation}\label{eq:T-hatpi-pi^eta;lin}
        T_{\hat\pi}^{\pi_{\eta}}(\hat x)= \hat x-\Cov(\hat\pi)H^T(H\Cov(\hat\pi)H^T+\Gamma)^{-1}(H\hat x+\eta-y^\dag),
    \end{equation}
    which is precisely the update rule for the analysis step of the mean-field Ensemble Kalman filter.
\end{theorem}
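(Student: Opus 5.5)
Write $C:=\Cov(\hat\pi)$, $b:=y^\dag-\eta$, and $V(x):=\tfrac12|Hx-b|_\Gamma^2$. This $V$ is a convex quadratic with $\nabla V(x)=H^T\Gamma^{-1}(Hx-b)$. The plan has three parts: existence of a minimizer of \eqref{eq:W2Mpost;variational;lin}, the form of its optimal map via Lemma~\ref{lem:EL;potential}, and uniqueness.

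\emph{Existence.} The functional $\mathcal F(\mu):=\frac12 W_{2,C}^2(\mu,\hat\pi)+\int V\,d\mu$ is nonnegative. It is also coercive in second moments, because $W_{2,C}(\mu,\hat\pi)$ controls $\int|x|^2d\mu$ up to constants depending on $\hat\pi$ and $\lambda_{\max}(C)$. So a minimizing sequence is tight with uniformly bounded second moments. Both terms are lower semicontinuous under weak convergence: $W_{2,C}$ is, by the standard optimal transport theory with the continuous nonnegative cost $|x-y|_C^2$, and $\int V\,d\mu$ is, since $V\ge0$ is continuous. Hence a minimizer $\pi_\eta\in\P_2(\R^d)$ exists.

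\emph{Form of the map.} Lemma~\ref{lem:EL;potential} applies with $M=C$, since $V$ is convex (semi-convex with $\lambda=0$). It says the optimal map from $\pi_\eta$ to $\hat\pi$ is
\[S(x)=x+CH^T\Gamma^{-1}(Hx-b)=(I+CH^T\Gamma^{-1}H)x-CH^T\Gamma^{-1}b.\]
$S$ is affine, and its linear part is invertible because $I+CH^T\Gamma^{-1}H=C(C^{-1}+H^T\Gamma^{-1}H)$ is a product of invertible matrices. Its inverse is therefore the affine map
\[T(\hat x)=(C^{-1}+H^T\Gamma^{-1}H)^{-1}\bigl(C^{-1}\hat x+H^T\Gamma^{-1}b\bigr).\]
The Woodbury identity gives $(C^{-1}+H^T\Gamma^{-1}H)^{-1}=C-CH^T(HCH^T+\Gamma)^{-1}HC$. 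Expanding, using $H^T\Gamma^{-1}-H^T(HCH^T+\Gamma)^{-1}HCH^T\Gamma^{-1}=H^T(HCH^T+\Gamma)^{-1}$, shows that $T$ equals \eqref{eq:T-hatpi-pi^eta;lin}. So $\pi_\eta=T_\#\hat\pi$. Finally, $(\id\times S)_\#\pi_\eta$ is an optimal plan and equals $(T\times\id)_\#\hat\pi$, so $(\id\times T)_\#\hat\pi$ is an optimal plan from $\hat\pi$ to $\pi_\eta$. Thus $T$ is the $C$-weighted optimal map from $\hat\pi$ to $\pi_\eta$, as claimed.

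\emph{Uniqueness.} This is the main obstacle, and I would get it from the map formula rather than from strict convexity. The formula for $T$ is determined by $C$, $H$, $\Gamma$, $b$ and $\hat\pi$ alone, not by which minimizer we started with. So every minimizer $\pi^\ast$ satisfies $\pi^\ast=(S^{-1})_\#\hat\pi=T_\#\hat\pi$, and the minimizer is unique. This argument leans on Lemma~\ref{lem:EL;potential} holding for an arbitrary minimizer, which is exactly how it is stated.

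If that reading of the lemma were in doubt, I would instead prove strict convexity of $\mathcal F$ along generalized geodesics based at $\hat\pi$. In the variable $z=C^{-1/2}x$ the cost becomes Euclidean, and $\frac12W_2^2(\cdot,\hat\pi)$ is $1$-convex along such curves while $\int V$ is convex. Two distinct minimizers would then give a strictly smaller value at the midpoint, a contradiction. Because $\hat\pi$ need not be absolutely continuous, this alternative requires care to interpolate via plans rather than maps.
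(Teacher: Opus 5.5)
Your proposal is correct. Existence by the direct method and the formula for the map follow the paper: Lemma~\ref{lem:EL;potential} with $M=\Cov(\hat\pi)$, inversion of the affine map, then Woodbury. You also justify something the paper passes over with ``invert this map'': the inverse $T$ of the optimal map $S$ from $\pi_\eta$ to $\hat\pi$ is itself the optimal map from $\hat\pi$ to $\pi_\eta$. This follows from $(\id\times S)_\#\pi_\eta=(T\times\id)_\#\hat\pi$ and the symmetry of the cost $|x-y|_{\Cov(\hat\pi)}^2$.

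Your route genuinely differs from the paper's only in the uniqueness step.

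\emph{The paper's route.} It argues by strict convexity along generalized geodesics based at $\hat\pi$. The term $\frac12 W_2^2(\cdot,\hat\pi;\Cov(\hat\pi))$ is $1$-convex along such curves and the quadratic potential is convex, so two distinct minimizers would give a strictly smaller value at the midpoint. When $\hat\pi$ admits no optimal maps, the interpolation must be done with three-plans as in \cite[Chapter 9.2]{AGS}.

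\emph{Your route.} You read uniqueness off the Euler--Lagrange characterization. The appendix proof of Lemma~\ref{lem:EL;potential} applies to an arbitrary minimizer $\pi^\ast$ and to every optimal plan, so it gives $S_\#\pi^\ast=\hat\pi$. Here $S$ is the same invertible affine map for every minimizer, hence $\pi^\ast=(S^{-1})_\#\hat\pi$ is determined.

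\emph{Comparison.} Your argument is shorter and needs no generalized-geodesic machinery. It handles singular $\hat\pi$ at no extra cost and yields the identity $\pi_\eta=T_\#\hat\pi$ directly. It also extends to any convex $C^1$ potential $V$, since $x\mapsto x+M\nabla V(x)=M\nabla\bigl(\tfrac12|x|_M^2+V(x)\bigr)$ is then injective. The paper's convexity argument, in contrast, does not depend on the first-order optimality analysis or on differentiability of $V$. It would therefore still work where the Euler--Lagrange equation is harder to justify, for example for non-smooth convex potentials.
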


\begin{proof}
    The existence of a minimizer follows from the direct method of the calculus of variations. Indeed, both the square of the $\Cov(\hat\pi)$-weighted Wasserstein distance and the potential energy are lower-semicontinuous  with respect to the narrow convergence of measures (i.e. weak convergence of measures against continuous and bounded test functions). Clearly, any minimizing sequence should have bounded second moments, hence precompact in the narrow topology by Prokhorov's theorem.

    Uniqueness is a consequence of the strict convexity of the minimization problem along the following generalized geodesic: given $\mu,\nu\in\P_2(\R^d)$ and $\Cov(\hat\pi)$-weighted optimal transport maps $T_{\hat\pi}^{\mu},T_{\hat\pi}^{\nu}$ from $\hat\pi$ respectively to $\mu$ and $\nu$, consider the interpolating curve $\mu_t:=((1-t)T_{\hat\pi}^{\mu}+t T_{\hat\pi}^{\nu})_\# \hat\pi$. Then one can verify $\mu\mapsto \frac12 W_2^2(\mu,\hat\pi;\Cov(\hat\pi))$ is 1-convex along this curve, whereas 
    \[\nabla^2(|Hx+\eta-y^\dag|_\Gamma^2)=2H^T \Gamma^{-1} H\succeq 0\]
    implies that the potential energy is convex along this curve. The use of optimal transport maps in the argument is for notational convenience, and one can instead use optimal transport plans (couplings) to define the analogous generalized geodesics when the optimal transport maps do not exist; see \cite[Chapter 9.2]{AGS}.

    It remains to verify \eqref{eq:T-hatpi-pi^eta;lin}. Writing $\hat C:=\Cov(\hat\pi)$, the Euler-Lagrange equation~\eqref{eq:EL;potential} in this case reads
    \begin{align*}
    T_{\pi_{\eta}}^{\hat\pi}(x_{\eta})&=x_{\eta}+\hat CH^T \Gamma^{-1} (H x_{\eta}+\eta-y^\dag)   \\
    &=(I+\hat CH^T\Gamma^{-1}H)x_{\eta} + \hat CH^T\Gamma^{-1}(\eta-y^\dag).
    \end{align*}
    Thus we can invert this map to obtain
    \begin{align*}
        T_{\hat\pi}^{\pi_{\eta}}(\hat x):=(I+\hat C H^T\Gamma^{-1} H)^{-1}(\hat x-\hat C H^T\Gamma^{-1}(\eta-y^\dag)).
    \end{align*}
    We use the Woodbury matrix identity
    \begin{equation}\label{eq:woodbury}
    (A+UCV)^{-1}=A^{-1}-A^{-1}U(VA^{-1}U+C^{-1})^{-1}VA^{-1}
    \end{equation}
    in two forms: firstly, with the choice $A=I$, $U=\hat CH^T$, $C=\Gamma^{-1}$, $V=H$, which yields
\begin{equation}\label{eq:woodbury1}
    (I+\hat CH^T\Gamma^{-1}H)^{-1}
    = I - \hat CH^T(H\hat CH^T+\Gamma)^{-1}H,
\end{equation}
    and secondly, $A=U=\Gamma$, $V=H\hat C H^T$ and $C=\Gamma^{-1}$, which leads to
\begin{equation}\label{eq:woodbury2}
    (H\hat C\sans{H}^T +\Gamma)^{-1} = \Gamma^{-1} -(H \hat C H^T+\Gamma)^{-1}H\hat C H^T\Gamma^{-1}.
\end{equation}
Thus
\begin{align*}
    T_{\hat\pi}^{\pi_{\eta}}(\hat x)&=\hat x-\hat CH^T(H\hat CH^T +\Gamma)^{-1}H\hat x +\hat C H^T\Gamma^{-1}(y^\dag-\eta) \\
    &\qquad-\hat C H^T(H\hat C H^T +\Gamma)^{-1} H \hat C H^T \Gamma^{-1}(y^\dag-\eta)\\
    &=\hat x-\hat CH^T(H\hat CH^T +\Gamma)^{-1}H\hat x+\hat C H^T (\Gamma^{-1}-(H \hat C H^T+\Gamma)^{-1} H\hat CH^T\Gamma^{-1})(y^\dag-\eta) \\
    &=\hat x-\hat CH^T(H\hat CH^T +\Gamma)^{-1}H\hat x +\hat C H^T(H\hat C H^T +\Gamma)^{-1}(y^\dag-\eta)   \\
    &=\hat x+\hat C H^T ( H\hat C H^T +\Gamma)^{-1}(y^\dag-\eta-H\hat x).
\end{align*}
\end{proof}

\subsection{The Mean-field EnKF recursion as a time discretization}\label{ssec:disc-cts}
In light of Theorem ~\ref{thm:MFEnKF;var}, we have the following rigorous interpretation of the mean-field ensemble Kalman filter:
\begin{equation}\label{eq:MfEnKF;var;full}
\begin{split}
    \hat\pi_{n+1}&=\mathsf{P}\pi_n,\\
    \pi_{n+1}&=\E^{\eta_{n+1}}\left[\argmin_{\nu\in\P_2(\R^d)}\frac12 W_2^2(\nu,\hat\pi_{n+1};\Cov(\hat\pi_{n+1}))+\frac12\int_{\R^d}|Hx+\eta_{n+1}-Y_{n+1}^\dag|_\Gamma^2\;d\nu(x)\right],
\end{split}
\end{equation}
where the expectation in the second step is defined as in \eqref{eq:pi=avgpi^eta}, and the operator $\mathsf{P}$ in the forecast step is given by
\[\mathsf{P}\mu(dx)=\frac{1}{\sqrt{(2\pi)^d\det\Sigma}}\int_{\R^d}\exp\left(-\frac{|x-Ax'|^2_{\Sigma}}{2}\right)\mu(dx').\]
As the derivation in this section was independent of $\mathsf{P}$, this characterization applies as long as the observation is linear, regardless of whether the signal process is linear.

In most applications, the discrete-time signal and observation dynamics approximate continuous-time dynamics. Suppose the discrete-time model~\eqref{def:sig-obs;discrete}
\begin{align*}
    X_{n+1} &= AX_n + \xi_{n+1} \text{ where } \xi_{n+1}\sim N(0,\Sigma),\\
    Y_{n+1} &= HX_n+\eta_{n+1} \text{ where } \eta_{n+1}\sim N(0,\Gamma)
\end{align*}
is a discretization of the continuous-time model~\eqref{eq:linearSigObs;cts}
\begin{align*}
    dX_t &= \sF X_t\;dt+\sqrt{\sSig}dB_t,\\
    dZ_t &= \sH X_t\;dt+\sqrt{\sG}dW_t,
\end{align*}
where $\sF\in\R^{d\times d},\sH\in\R^{k\times d},\sSig\in\R^{d\times d}_{++},\sG\in\R^{k\times k}_{++}$ and $(B_t)_{t\geq 0},(W_t)_{t\geq 0}$ are independent copies of the standard Brownian motion. Introducing the time step $\tau>0$ and setting \eqref{def:tau_scale} and $Y_n^\dag=Z_n^\dag-Z_{n-1}^\dag$, we recover the continuous-time model from the discrete-time model in the limit $\tau\to 0$; see \cite[Section 3.1]{CalReiStu25} for a more detailed discussion.

Let us fix $\tau>0$ and relabel the time indices $n\in\N$ by $n\tau$. Then $\sP=\sP_\tau$ becomes
\[\sP_\tau\mu(dx) = \frac{1}{\tau^{1/2}\sqrt{(2\pi)^d\det\sSig}}\int_{\R^d}\exp\left(-\frac{|x-(I+\tau\sF)x'|_{\sSig}^2}{2\tau}\right)\;\mu(dx'),\]
and simple calculations reveal that the optimal filter can be written as
\begin{equation}\label{eq:OF;var;tau}
\begin{split}
    \tilde\pi_{(n+1)\tau}&=\sP_\tau\bar\pi_{n\tau},\\
    \bar\pi_{(n+1)\tau}&=\argmin_{\mu\in\P(\R^d)}\frac{1}{\tau}\kl(\mu||\tilde\pi_{(n+1)\tau})+\int_{\R^d}\left|\sH x-\tfrac{Z_{(n+1)\tau}^\dag-Z_{n\tau}^\dag}{\tau}\right|_{\sG}^2\;d\mu(x);
\end{split}
\end{equation}
here, we have used $\tilde\pi$ for the intermediate measure to avoid conflict with the notation for the MFEnKF. Similarly, letting $\eta_{n+1}=\tau^{1/2}\sG^{1/2}g_{n+1}$ for $g_{n+1}\in N(0,I)$, and writing $\Cov(\hat\pi_{(n+1)\tau})=\hat C_{(n+1)\tau}$ for simplicity, the MFEnKF~\eqref{eq:MfEnKF;var;full} becomes
\begin{equation}\label{eq:MFEnKF;var;tau}
\begin{split}
    \hat\pi_{(n+1)\tau}&=\sP_\tau\pi_{n\tau},\\
    \pi_{(n+1)\tau}&=\E^{g_{n+1}}\argmin_{\mu\in\P(\R^d)}\frac{1}{2\tau}W_{2}^2(\mu,\hat\pi_{(n+1)\tau};\hat C_{(n+1)\tau})+\int_{\R^d}\left|\sH x+\tfrac{\sG^{1/2}g_{n+1}}{\tau^{1/2}}-\tfrac{Z_{(n+1)\tau}^\dag-Z_{n\tau}^\dag}{\tau}\right|_{\sG}^2\;d\mu(x).
\end{split}
\end{equation}
Having introduced the time step, the gradient flow structure (in the sense of the minimizing movement scheme) of the analysis steps in both \eqref{eq:OF;var;tau} and \eqref{eq:MFEnKF;var;tau} becomes more apparent.

Moreover, by making appropriate substitutions in~\eqref{eq:T-hatpi-pi^eta;lin}, we know that for each realization of $g_{n+1}$, the $\hat C_{(n+1)\tau}$-weighted optimal transport map $T_{(n+1)\tau}$ from $\hat\pi_{(n+1)\tau}$ to $\pi_{(n+1)\tau,g_{n+1}}$ is given by
\begin{align*}
    T_{(n+1)\tau}(\hat x)-\hat x&=-\hat C_{(n+1)\tau}(\tau\sH)^T(\tau^2\sH\hat C_{(n+1)\tau}\sH^T+\tau\sG)^{-1}\left(\tau\sH\hat x + \tau^{1/2}\sG^{1/2}g_{n+1} - (Z_{(n+1)\tau}^\dag-Z_{n\tau}^\dag)\right)\\
    &=-\tau\hat C_{(n+1)\tau}\sH^T(\tau\sH\hat C_{(n+1)\tau}\sH^T+\sG)^{-1}\left(\sH\hat x+\tau^{-1/2}\sG^{1/2}g_{n+1}-\tfrac{Z_{(n+1)\tau}^\dag-Z_{n\tau}^\dag}{\tau}\right).
\end{align*}
Let us write $\xi_{(n+1)\tau}=\tau^{1/2}\sSig^{1/2}g_{n+1}'$ for $g_{n+1}'\sim N(0,I)$ independent from $g_{n+1}$. 
Given $X_{n\tau}\sim\pi_{n\tau}$, the forecast step is given by
\begin{align*}
    \hat X_{(n+1)\tau}-X_{n\tau}=\tau\sF X_{n\tau}+\sSig^{1/2} g_{n+1}'.
\end{align*}
Thus, the full the MFEnKF update corresponds to
\begin{align*}
    X_{(n+1)\tau}-X_{n\tau}=&T_{\hat\pi_{(n+1)\tau}}^{\pi_{(n+1)\tau,g_{n+1}}}(\hat X_{(n+1)\tau})-\hat X_{(n+1)\tau}+\hat X_{(n+1)\tau}-X_{n\tau}\\
    =&-\tau\hat C_{(n+1)\tau}\sH^T(\tau\sH\hat C_{(n+1)\tau}\sH^T+\sG)^{-1}\left(\sH\hat X_{(n+1)\tau}+\tau^{-1/2}\sG^{1/2}g_{n+1}-\tfrac{Z_{(n+1)\tau}^\dag-Z_{n\tau}^\dag}{\tau}\right)\\
    &+\tau \sF X_{n\tau}+\tau^{1/2}\sSig^{1/2} g_{n+1}'
\end{align*}
As $H\hat X_{(n+1)\tau}=H X_{n\tau}+o(1)$, dividing both sides by $\tau$ and formally letting $\tau\to 0$ we obtain the continuous-time MFEnKF~\eqref{eq:EnKF;mf;cts}
\begin{equation*}
    d X_t=\sF X_t\;dt + \sqrt{\sans{\Sigma}}\;dB_t - \Cov(X_t)\sH^T\sG^{-1}(\sH X_t\;dt+\sqrt{\sG}dW_t- dZ_t^\dag).
\end{equation*}

Characterizations \eqref{eq:MfEnKF;var;full} \eqref{eq:MFEnKF;var;tau} also hint at the mechanism of the large-time stabilization of the MFEnKF. As the variational problem in the second step is strictly convex in the observed subspace, we can expect contraction in $\range(\Cov(\hat\pi_{n+1})H^T)$, which would compensate for potential expansion of the forecast step.

The difficulty of analysis lies in the fact that this subspace and the modulus of convexity (hence the strength of contraction) depends on the covariance matrix of $\hat\pi_{n+1}$, which evolves with time. When the signal process is nonlinear, it is unclear whether understanding the evolution of the covariance matrix much simpler than understanding the evolution of the measure itself.

However, for linear signal processes, the covariance matrices of the MFEnKF satisfy a closed system of matrix Riccati equations, \eqref{eq:Riccd;recursion} in discrete time and \eqref{eq:Riccc;diff} in continuous time, regardless of whether the initial data are Gaussian. Moreover, \eqref{eq:MfEnKF;var;full} and \eqref{eq:MFEnKF;var;tau} suggest that the covariance-weighted Wasserstein metric provides a natural way to measure the distance between the MFEnKF with the same covariance matrix. Leveraging these two facts, we study the large-time behavior of the MFEnKF in Section~\ref{sec:linear}.

\section{Large-time behavior in the linear setting}\label{sec:linear}
In this section, we analyze the large-time behavior of the MFEnKF for linear signal and observation processes, both in discrete and continuous time. Thanks to the linearity of the system, the evolution of the covariance matrices is characterized by a closed system of matrix Riccati equations. Thus, we can use the well-understood properties of the Riccati equations to understand the dynamics of the MFEnKF for general (non-degenerate) initial data.

We first collect important properties of the algebraic Riccati equations in Section~\ref{ssec:prelim}. As the solutions of the equations correspond to covariance matrices for the MFEnKF, we are interested in understanding when the solution converges to a unique nondegenerate steady state. 
The key conditions, namely the detectability and controllability, are described in Definition~\ref{def:detectability}, and Proposition~\ref{prop:DARE} and Proposition~\ref{prop:CARE} state the asymptotic properties of solutions of the Riccati equations. 

Section~\ref{ssec:disc_time} then studies the behavior of the MFEnKF in discrete time. In Theorem~\ref{thm:gauss_lyapunov;disc} we show that the MFEnKF is a strict contraction at each step towards the Gaussian manifold $\P_2^g(\R^d)$ when measured in the covariance-weighted Wasserstein distances. We also include the corresponding asymptotic result Proposition~\ref{prop:gauss_conv;W2;asymp} in the $2$-Wasserstein distances to illustrate the difficulty of establishing nonasymptotic results under the same conditions.

In Section~\ref{ssec:EnKF;cts} we study the large-time behavior of the MFEnKF in continuous time. We first establish in Theorem~\ref{thm:gauss_lyapunov;cts} the nonasymptotic exponential contractive rates of the MFEnKF towards the Gaussian manifold, from which asymptotic stability of the MFEnKF (Corollary~\ref{cor:MFEnKF;stable;cts}) readily follows. Then we are able to use the asymptotic stability of the optimal filter due to Ocone and Pardoux~\cite{OconePardeoux96asymptotic} to show the accuracy of the MFEnKF, namely that any bounded and uniformly continuous moment of the MFEnKF distribution coincides with that of the optimal filter distribution almost surely in large time; see Corollary~\ref{cor:MFEnKF_accurate;cts}.

\subsection{Preliminaries}\label{ssec:prelim}
In this section, we recall concepts from control theory that ensure that the solutions of the algebraic Riccati equations are well-behaved. Recall the discrete Riccati operator $\Ricc_d:\R^{d\times d}_+\rightarrow\R^{d\times d}_+$ defined in~\eqref{def:Riccd},
\begin{equation*}
\Ricc_d(C)=A C A^T+\Sigma-A C H^T(H X H^T+\Gamma)^{-1}H C A^T.
\end{equation*}
where $A,\Sigma\in\R^{d\times d}, H\in\R^{k\times d}, \Gamma\in\R^{k\times k}$ with $\Sigma\succeq 0,\Gamma\succ0$. The analogous continuous Riccati operator $\Ricc_c:\R^{d\times d}_+\rightarrow\R^{d\times d}_+$ was defined in \eqref{def:Riccc} as
\begin{equation*}
    \Ricc_c(C)=C\sF^T+\sF C+\sSig-C\sH^T\sG^{-1}\sH C,
\end{equation*}
where $\sF\in\R^{d\times d}$,$\sSig\in\R^{d\times d}_+$, $\sH\in\R^{k\times d}$, $\sG\in\R^{k\times k}_{++}$. For the connection between the discrete and continuous-time Riccati operators and the matrices involved, see Section~\ref{ssec:disc-cts}.

For both $\Ricc_d,\Ricc_d$, the input variable $C$ should be thought of as the covariance matrix of the MFEnKF distribution or the Kalman filtering distribution; as we will see later, the covariance matrix $\hat C_n=\Cov(\hat X_n)$ of solution of the discrete-time MFEnKF~\eqref{def:MFEnKF;disc} satisfies the equation
\[\hat C_{n+1}=\Ricc_d(\hat C_n),\]
whereas the covariance matrix $C_t=\Cov(X_t)$ of the solution of the continuous-time MFEnKF~\eqref{eq:EnKF;mf;cts} satisfies
\[\dot C_t=\Ricc_c(C_t).\]
Again, we note that the evolution of the covariance is independent of the observations, thanks to the linearity of the signal-observation model.

To characterize stability properties of matrix dynamical systems we will use  the spectral radius $\rho(\cdot)$ from~\eqref{def:rho} in discrete time and the spectral abscissa $\alpha(\cdot)$ from~\eqref{def:alpha} in continuous time. We begin by defining the key conditions for stability of the Riccati recursion, namely detectability and controllability; the definitions are collected from Lancaster and Rodman~\cite[Chapter 4]{LanRod95}.

\begin{definition}\label{def:detectability}
Let $A,M\in\R^{d\times d},B\in\R^{d,k},H\in\R^{k\times d}$.
\begin{listi}
    \item (Stable matrix) Square matrix $M$ is $c$-stable if $\alpha(M)<0$, namely its eigenvalues are in the open left half-plane; we say $M$ is $d$-stable if $\rho(M)<1$, namely its eigenvalues are in the open unit disc.

    \item (Stabilizable pair) Let $A\in\R^{d\times d}$ and $B\in\R^{d\times k}$. The pair $(A,B)$ is c-stabilizable (resp. d-stabilizable) if there exists $L\in\R^{k\times d}$ such that $A+BL$ is c-stable (resp. d-stable).

    \item (Detectable pair) Let $A\in\R^{d\times d}$ and $H\in\R^{k\times d}$. The pair $(H,A)$ is $c$-detectable (resp. $d$-detectable) if $(A^T,H^T)$ is $c$-stabilizable (resp. $d$-stabilizable), or, equivalently, if there exists a matrix $K\in\R^{d\times k}$ such that $A-KH$ is $c$-stable (resp. $d$-stable).

    \item (Controllable pair) Let $A\in\R^{d\times d}$ and $B\in\R^{d\times k}$ where $k\leq d$.
    We say the pair $(A,B)$ is controllable if
    \begin{equation}\label{def:controllable}
        \rank[B, AB, A^2B, \cdots , A^{d-1}B]=d.
    \end{equation}

    \item (Observable pair) Let $A\in\R^{d\times d}$ and $H\in\R^{k\times d}.$ We say the pair $(H,A)$ is observable if $(A^T,H^T)$ is controllable.
\end{listi}
\end{definition}
We will sometimes skip the prefixes $c$- and $d$- when they are clear from context.

We now state our set of assumptions in the discrete-time and continuous-time settings.

\begin{assumption}\label{ass:disc}
We assume that $A,\Sigma\in\R^{d\times d}$, $H\in\R^{k\times k}$, and $\Gamma\in\R^{k\times d}$ in \eqref{def:sig-obs;discrete} satisfy the following assumptions:
  \begin{equation}\label{ass:detectable;d}
    \tag{A1d} (H,A) \text{ is a d-detectable pair,}
\end{equation}  
and
\begin{equation}\label{ass:nondegen;d}
    \tag{A2d} A\in\R^{d\times d} \text{ is invertible, and } \Sigma,\Gamma\succ0.
\end{equation}
\end{assumption}

In the continuous-time setting, we make the following assumptions.
\begin{assumption}\label{ass:cts}
We assume that $\sF,\sSig\in\R^{d\times d}$, $\sH\in\R^{k\times d}$, and $\sG\in\R^{k\times k}$ in \eqref{eq:linearSigObs;cts} satisfy the following assumptions:
\begin{equation}\label{ass:detectable;c}
    \tag{A1c} (\sH,\sF) \text{ is a c-detectable pair,}
\end{equation}
and
\begin{equation}\label{ass:nondegen;c}
    \tag{A2c} \sSig,\sG\succ 0.
\end{equation}
\end{assumption}

We leave a few remarks on the assumptions.
\begin{remark}\label{rmk:assumptions}
Roughly speaking, we can understand detectability assumptions \eqref{ass:detectable;d} and \eqref{ass:detectable;c} as requiring the observations to be strong enough to compensate for potential instabilities in the signal dynamics. As we shall see, detectability is the key conditions related to the existence of stable solutions to the Riccati equations in both discrete and continuous-time settings.

Regarding assumptions \eqref{ass:nondegen;d} and \eqref{ass:nondegen;c}, nondegeneracy of the observation noise covariance $\Gamma,\sG$ is necessary to rule out pathological behavior of the optimal filter. Without this condition, the optimal filter may be unstable even when the signal process is ergodic; see for instance~\cite[Section 3]{BaxChiPav04Wonham} for a simple counterexample in finite state space.

Nondegeneracy of the signal noise covariance $\Sigma,\sSig$ are related to controllability, which, together with detectability, guarantees the limiting covariance is nondegenerate. Hence we can understand controllability as a regularity condition. Positive definiteness of $\Sigma,\sSig$ implies controllability of $(A,\Sigma^{1/2})$ in the discrete case and $(\sF,\sSig^{1/2})$ in the continuous case without any additional condition on $A$ and $\sF$.  

Finally, nondegeneracy of $A$ in the discrete-time case is a technical condition that, together with positive definiteness of $\Sigma$, allows us to obtain strict contraction of the MFEnKF towards the Gaussian manifold in Theorem~\ref{thm:gauss_lyapunov;disc}. When the filter comes from a time discretization of a continuous dynamics, $A$ takes the form $A=I+\tau\sF$, hence is nondegenerate when the time step is sufficiently small. This is consistent with that strict contraction of the MFEnKF towards the Gaussian manifold in the continuous-time case requires $\sSig\succ0$ but no assumption on $\sF$.
\end{remark}

We now discuss the implication of these conditions on the discrete matrix Riccati equations. First we consider the discrete algebraic Riccati equation (DARE). The following is a paraphrase of Theorem 17.5.3 of Lancaster and Rodman~\cite{LanRod95}.
\begin{proposition}\label{prop:DARE}
    Let $A,\Sigma\in\R^{d\times d}, H\in\R^{k\times d}, \Gamma\in\R^{k\times k}$ with $\Sigma\succeq 0,\Gamma\succ0$. Given a positive semi-definite matrix $C_0\in\R^{d\times d}$, iteratively define the 
    sequence $(C_n)_{n\geq 1}$ in $\R^{d\times d}_+$ by
    \begin{equation}\label{def:DARE}
    C_{n+1}=A C_n A^T+\Sigma-A C_n H^T(H C_n H^T+\Gamma)^{-1}H C_n A^T.
    \end{equation}
    
    If $(H,A)$ is a $d$-detectable pair and $(A,\Sigma^{1/2})$ is $d$-stabilizable, there is a unique $C_\infty\succeq 0$ independent of $C_0$ such that $C_n\rightarrow C_\infty$ as $n\rightarrow\infty$. Moreover, $C_\infty$ satisfies the stationary equation
    \[ C_\infty=A C_\infty A^T+\Sigma-A C_\infty H^T(H C_\infty H^T+\Gamma)^{-1}H C_\infty A^T,\]
    and  $\rho(A-K_d(C_\infty))<1$, where $K_d$ is the discrete Kalman gain \eqref{def:Kd}
    \[
        K_d(C)= ACH^T(HCH^T+\Gamma)^{-1}.
    \]
Moreover, if $(A,\Sigma^{1/2})$ is controllable, then $C_\infty\succ 0$.
\end{proposition}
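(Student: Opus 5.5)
The plan is the classical monotonicity-and-sandwich argument for the Riccati recursion, built on the variational form of $\Ricc_d$. Writing $K_d(C)=ACH^T(HCH^T+\Gamma)^{-1}$ and completing the square in $K$ gives
\[
\Ricc_d(C)=\min_{K\in\R^{d\times k}}\Big[(A-KH)C(A-KH)^T+K\Gamma K^T+\Sigma\Big],
\]
with the minimum in the Loewner order attained at $K=K_d(C)$. Two consequences follow immediately.
\begin{itemize}
\item Monotonicity: $C\preceq C'$ implies $\Ricc_d(C)\preceq\Ricc_d(C')$. Indeed, take the optimal gain for $C'$ and use it as a suboptimal gain for $C$.
\item Comparison: for every fixed $K$,
\[
\Ricc_d(C)\preceq (A-KH)C(A-KH)^T+K\Gamma K^T+\Sigma .
\]
\end{itemize}

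\emph{Existence of a limit from $C_0=0$.} Start the recursion at $C_0=0$. Then $C_1=\Sigma\succeq 0=C_0$, and monotonicity gives, by induction, that $(C_n)$ is nondecreasing. For an upper bound, d-detectability provides $L$ with $\rho(A-LH)<1$. The comparison inequality with $K=L$ dominates $C_n$ by the iterates of an affine Lyapunov map with stable linear part, and these iterates converge. A bounded monotone sequence of symmetric matrices converges, so $C_n\to C_\infty\succeq 0$. By continuity of $\Ricc_d$ on $\R^{d\times d}_+$ (recall $\Gamma\succ 0$), $C_\infty$ solves the stationary equation.

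\emph{Stabilizing property.} This is the step I expect to be the main obstacle. Set $K_\infty=K_d(C_\infty)$ and $\Phi=A-K_\infty H$. The stationary equation then reads
\[
C_\infty=\Phi C_\infty\Phi^T+K_\infty\Gamma K_\infty^T+\Sigma .
\]
Suppose $\Phi^Tv=\lambda v$ with $|\lambda|\ge 1$ and $v\neq 0$. Testing the equation against $v^*$ and $v$ gives
\[
(1-|\lambda|^2)\,v^*C_\infty v=v^*K_\infty\Gamma K_\infty^Tv+v^*\Sigma v\ \ge\ 0,
\]
while the left side is $\le 0$. Hence $K_\infty^Tv=0$ (using $\Gamma\succ0$) and $\Sigma^{1/2}v=0$. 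The first identity gives $A^Tv=\Phi^Tv+H^TK_\infty^Tv=\lambda v$. So $v$ is an eigenvector of $A^T$ with $|\lambda|\ge1$ that is annihilated by $(\Sigma^{1/2})^T$. By the Hautus (PBH) test this contradicts d-stabilizability of $(A,\Sigma^{1/2})$. Therefore $\rho(\Phi)<1$.

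\emph{General $C_0$ and uniqueness.} For arbitrary $C_0\succeq0$ I will sandwich the sequence.
\begin{itemize}
\item Lower bound: by monotonicity, $C_n$ dominates the iterates started from $0$, which converge to $C_\infty$.
\item Upper bound: the comparison inequality with $K=K_\infty$, minus the stationary equation, gives $C_{n+1}-C_\infty\preceq\Phi(C_n-C_\infty)\Phi^T$. Hence $C_n-C_\infty\preceq\Phi^n(C_0-C_\infty)\Phi^{nT}\to0$, since $\rho(\Phi)<1$.
\end{itemize}
Together these give $C_n\to C_\infty$ for every $C_0$. Uniqueness of the positive semidefinite fixed point follows by starting the recursion at any other fixed point: the sequence is constant, yet it must converge to $C_\infty$.

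\emph{Positive definiteness under controllability.} Suppose $C_\infty v=0$. Testing the stationary equation against $v$ forces
\[
\Sigma^{1/2}v=0,\qquad K_\infty^Tv=0,\qquad C_\infty\Phi^Tv=0 .
\]
As before, $K_\infty^Tv=0$ gives $\Phi^Tv=A^Tv$, so $A^Tv\in\ker C_\infty$. Iterating this argument yields $\Sigma^{1/2}(A^T)^jv=0$ for all $j$. By the rank condition \eqref{def:controllable} this forces $v=0$, so $C_\infty\succ0$.
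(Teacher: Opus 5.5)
Your proof is correct and complete. The paper gives no proof of this proposition: it simply cites Lancaster--Rodman [Theorem 17.5.3], where the result comes out of the general theory of algebraic Riccati equations developed in that monograph. Your route is the classical self-contained one:
\begin{itemize}
\item completing the square to write $\Ricc_d$ as a Loewner-order minimization over gains $K$, which gives monotonicity and the comparison inequality;
\item monotone convergence of the iterates from $C_0=0$, bounded above by a stable Lyapunov recursion;
\item a Lyapunov/Hautus argument for $\rho(A-K_d(C_\infty)H)<1$;
\item a Loewner sandwich to handle arbitrary $C_0$.
\end{itemize}
You correctly read the statement's $\rho(A-K_d(C_\infty))$ as a typo for $\rho(A-K_d(C_\infty)H)$. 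Only the trivial direction of the Hautus test is needed. If $A^Tv=\lambda v$ and $\Sigma^{1/2}v=0$, then $(A+\Sigma^{1/2}L)^Tv=\lambda v$ for every $L$, so $\lambda$ is an eigenvalue of every $A+\Sigma^{1/2}L$. This contradicts the paper's definition of d-stabilizability directly, so your argument is fully elementary.

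What your version buys is transparency about which hypothesis does what:
\begin{itemize}
\item detectability is used only to bound the iterates started at $0$, which gives existence;
\item stabilizability of $(A,\Sigma^{1/2})$ is used only for the stabilizing property, and hence for independence from $C_0$ and uniqueness of the positive semidefinite fixed point;
\item controllability is used only for $C_\infty\succ0$.
\end{itemize}
You also get an explicit one-sided exponential bound, $C_n-C_\infty\preceq\Phi^n(C_0-C_\infty)(\Phi^n)^T$. Moreover, the identity $C_\infty=\Phi C_\infty\Phi^T+K_\infty\Gamma K_\infty^T+\Sigma$ at the heart of your argument is exactly the one the paper later uses to compute $\beta_d(\hat C_\infty)$ in Theorem~\ref{thm:gauss_lyapunov;disc}. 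So your proof fits naturally with the paper's own Lyapunov-type reasoning. The citation, in exchange, comes with the broader structural information of the general theory, which the paper does not need here.
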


\begin{remark}
It is well known that controllability of $(A,B)$ is a strictly stronger property than stabilizability of the same pair both in the continuous and discrete sense~\cite[Theorem 4.4.2]{LanRod95}. Indeed, under controllability, one can explicitly construct a suitable matrix $L$ to ensure the stability of $A+BL$. Equivalently, observability is strictly stronger than detectability.
\end{remark}

We turn to the continuous algebraic Riccati equation (CARE). We state an adaptation of~\cite[Theorem 9.1.2]{LanRod95} to our setting, which is an analogue of Proposition~\ref{prop:DARE} for CARE. In addition, we state the exponential rates of convergence to the steady state~\eqref{eq:CARE;rate} due to Ocone and Pardoux~\cite[Equation (16) and Remark 2.1]{OconePardeoux96asymptotic}, which will enable us to study accuracy and asymptotic stability of the MFEnKF in Section~\ref{ssec:EnKF;cts}. 
\begin{proposition}\label{prop:CARE}
Let $\sF,\sSig\in\R^{d\times d}$, $\sH\in\R^{k\times d}$, $\sG\in\R^{k\times k}$ with $\sSig\succeq 0$ and $\sG\succ 0$. Let $C_0\in\R^{d\times d}_+$ and let $(C_t)_{t\geq 0}$ be the solution to the matrix differential equation 
\begin{equation}\label{def:CARE}
    \dot C_t = C_t\sF^T+\sF C_t+\sSig-C_t\sH^T\sG^{-1}\sH C_t.
\end{equation}

If $(\sH,\sF)$ is a $c$-detectable pair and $(\sF,\sSig^{1/2})$ is $c$-stabilizable, there is a unique $C_\infty \succeq 0$ independent of $C_0$ such that $C_t\rightarrow C_\infty$ as $t\rightarrow\infty$. Moreover, $C_\infty$ satisfies the stationary equation
\[C_\infty= C_\infty \sF^T+\sF C_\infty+\sSig-C_\infty\sH^T\sG^{-1}\sH C_\infty\]
and $\alpha(\sF-K(C_\infty)\sH)<0$ where
\begin{equation}\label{def:Kc}
    K_c(C)=C\sH^T\sG^{-1}.
\end{equation}
Moreover, if $(\sF,\sSig^{1/2})$ is controllable, then $C_\infty\succ 0$.
Finally if, for any two solutions $(C_t^1)_{t\geq 0},(C_t^2)_{t\geq 0}$ and a matrix norm, there exists a constant $\tilde c=\tilde c(C_0^1,C_0^2,\beta)$ (that can also depend on the matrix norm) such that
\begin{equation}\label{eq:CARE;rate}
    \norm{C_t^1-C_t^2}\leq \tilde ce^{-\beta t} \text{ for all }t\geq 0.
\end{equation}
\end{proposition}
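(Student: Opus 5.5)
The plan is to treat the first three assertions as the classical theory of the continuous algebraic Riccati equation, citing \cite[Theorem 9.1.2]{LanRod95}, and to prove the rate \eqref{eq:CARE;rate} directly from a linear equation satisfied by the difference of two solutions. I read the last sentence of the statement as follows: for every $\beta<2|\alpha(\sF-K_c(C_\infty)\sH)|$ there is a constant $\tilde c=\tilde c(C_0^1,C_0^2,\beta)$ such that \eqref{eq:CARE;rate} holds. Throughout, write $S:=\sH^T\sG^{-1}\sH$, so that $K_c(C)\sH=CS$.

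\textbf{Step 1 (stationary solution).} Detectability of $(\sH,\sF)$ and stabilizability of $(\sF,\sSig^{1/2})$ give, by \cite[Theorem 9.1.2]{LanRod95}, a unique solution $C_\infty\succeq0$ of the algebraic equation $\Ricc_c(C_\infty)=0$. This solution is stabilizing, i.e. $\Phi_\infty:=\sF-C_\infty S$ satisfies $\alpha(\Phi_\infty)<0$.

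\textbf{Step 2 (convergence from any $C_0$).} I would use the standard sandwich argument.
\begin{itemize}
\item Let $C_t^{0}$ be the solution started from $C_0=0$. By the comparison principle for Riccati flows it is nondecreasing in $t$.
\item It is bounded above. Fix $K$ with $\sF-K\sH$ stable, which exists by detectability. Then the covariance of the suboptimal linear observer with gain $K$ solves a Lyapunov ODE with bounded solutions, and dominates $C_t^0$.
\item Hence $C_t^0$ converges, and its limit solves the algebraic equation, so it equals $C_\infty$ by uniqueness.
\item For a general $C_0$, the comparison principle gives $C_t^0\preceq C_t$. For the upper bound, take a solution started from a large multiple of the identity that dominates $C_0$; it is squeezed down to $C_\infty$ by the same observer bound.
\end{itemize}
Convergence from every $C_0$ is also part of the cited theorem, so this step can simply be quoted.

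\textbf{Step 3 (positive definiteness).} Rewrite the stationary equation as the Lyapunov equation
\[\Phi_\infty C_\infty+C_\infty\Phi_\infty^T+\sSig+C_\infty S C_\infty=0.\]
Since $\Phi_\infty$ is stable, this gives the integral representation $C_\infty=\int_0^\infty e^{s\Phi_\infty}(\sSig+C_\infty SC_\infty)e^{s\Phi_\infty^T}\,ds$. Output feedback does not destroy controllability, so controllability of $(\sF,\sSig^{1/2})$ implies controllability of $(\Phi_\infty,[\sSig^{1/2},\,C_\infty \sH^T\sG^{-1/2}])$. Therefore the integrand is not identically degenerate in any direction, and $C_\infty\succ0$.

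\textbf{Step 4 (exponential rate).} Set $D_t=C^1_t-C^2_t$. Using the identity $C^1SC^1-C^2SC^2=C^1SD+DSC^2$, one computes
\[\dot D_t=\Phi^1_tD_t+D_t(\Phi^2_t)^T,\qquad \Phi^i_t:=\sF-C^i_tS.\]
Hence $D_t=U^1(t,0)D_0U^2(t,0)^T$, where $U^i$ is the transition matrix of $\dot x=\Phi^i_tx$. By Step 2, $\Phi^i_t\to\Phi_\infty$.

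Now fix $\gamma<|\alpha(\Phi_\infty)|$ and take $P\succ0$ solving the Lyapunov inequality $\Phi_\infty^TP+P\Phi_\infty\preceq-2\gamma' P$ for some $\gamma<\gamma'<|\alpha(\Phi_\infty)|$. Choose $T_0$, depending on $C_0^i$, such that $\|\Phi^i_t-\Phi_\infty\|$ is small for $t\ge T_0$. On $[T_0,\infty)$ the quadratic form $x^TPx$ then decays at rate at least $2\gamma$. On $[0,T_0]$, $U^i$ is bounded by Gronwall.

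This gives $\|U^i(t,0)\|\le c_ie^{-\gamma t}$, and hence $\|D_t\|\le \tilde c e^{-2\gamma t}$, so any $\beta<2|\alpha(\Phi_\infty)|$ works. The constant depends on the matrix norm only through norm equivalence.

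\textbf{Main obstacle.} The main difficulty is Step 4: controlling the transition matrices of a \emph{time-varying} system whose coefficients only converge to a stable limit. During the transient, $\Phi^i_t$ may even be unstable. The Lyapunov-norm perturbation argument above handles this, and it explains why $\tilde c$ depends on $C_0^1,C_0^2$ and $\beta$.
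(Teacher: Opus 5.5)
Your proposal is correct, but it does not follow the paper. The paper gives no argument for Proposition~\ref{prop:CARE}. It quotes \cite[Theorem 9.1.2]{LanRod95} for existence, uniqueness, the stabilizing property and positivity of $C_\infty$, and for convergence $C_t\to C_\infty$. It quotes \cite[Equation (16) and Remark 2.1]{OconePardeoux96asymptotic} for \eqref{eq:CARE;rate}.

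You rely on the same source for Steps 1--2 but prove Steps 3 and 4 yourself. Step 3 is sound: you rewrite the stationary equation as a Lyapunov equation for $\Phi_\infty$ and use positivity of the Gramian of $(\Phi_\infty,[\sSig^{1/2},C_\infty\sH^T\sG^{-1/2}])$, which holds because controllability is invariant under feedback. Step 4 is a complete proof of the rate: the identity $\dot D_t=\Phi^1_tD_t+D_t(\Phi^2_t)^T$, the factorization $D_t=U^1(t,0)D_0U^2(t,0)^T$, and a Lyapunov perturbation argument around $\Phi_\infty$ after a transient time $T_0$.

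What your route buys is a precise version of the garbled last sentence. Every $\beta<2\bar\lambda$, with $\bar\lambda=-\alpha(\Phi_\infty)$, is admissible, and $\tilde c$ depends on $C_0^1,C_0^2$ through the transient. This covers the only use of \eqref{eq:CARE;rate} in the paper, the proof of Corollary~\ref{cor:MFEnKF;stable;cts}, which needs $\beta=\bar\lambda-\eps$. Your reading of the stationary equation as $\Ricc_c(C_\infty)=0$ is also the right one.

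The $C_t^{-1}$-weighted computation in the proof of Theorem~\ref{thm:gauss_lyapunov;cts} bounds exactly your transition matrices $U^i$, with no transient. However, it needs invertible covariances and $\beta_c>0$ (the paper assumes $\sSig\succ0$ and $C_0\succ0$). It also only gives decay $\exp(-\tfrac12\int_0^t\beta_c(C^i_s)\,ds)$, whose asymptotic exponent $\tfrac12\beta_c(C_\infty)$ cannot exceed $\bar\lambda$. Your perturbation argument works under the weaker hypotheses of the proposition and gives the sharp exponent, at the price of a non-explicit constant.

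The one weak point is the last bullet of Step 2.
\begin{itemize}
\item With an arbitrary stabilizing gain $K$, the bound $\Ricc_c(C)\preceq(\sF-K\sH)C+C(\sF-K\sH)^T+\sSig+K\sG K^T$ only gives $C_t\preceq Q^K_t$. Here $Q^K_t\to Q^K_\infty$, the stationary error covariance of that suboptimal observer, which dominates $C_\infty$ but in general differs from it.
\item Starting from a large multiple $\lambda I$ does not help. For $x\in\ker\sH$ one has $x^T\Ricc_c(\lambda I)x=2\lambda x^T\sF x+x^T\sSig x$, which need not be nonpositive, so that solution need not decrease.
\end{itemize}
The fix is to take $K=C_\infty\sH^T\sG^{-1}$, which is stabilizing by Step 1. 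By your own Lyapunov form of the stationary equation in Step 3, $C_\infty$ is then the equilibrium of the comparison equation $\dot Q=\Phi_\infty Q+Q\Phi_\infty^T+\sSig+C_\infty\sH^T\sG^{-1}\sH C_\infty$. Hence $C_t\preceq Q_t=C_\infty+e^{t\Phi_\infty}(C_0-C_\infty)e^{t\Phi_\infty^T}\to C_\infty$ for every $C_0$. Together with $C^0_t\preceq C_t$, this closes Step 2 without any large initial datum. Since you also fall back on the citation here, as the paper does, this does not affect the overall argument.
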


\begin{remark}\label{rmk:OconePardoux}
    While the standard observation noise $\sG=I_k$ is assumed in~\cite{OconePardeoux96asymptotic}, their results apply to cases with $\sG\succ 0$ by the change of coordinates; note that all the notions provided in Definition~\ref{def:detectability} are independent of the coordinate system.
\end{remark}

When the discrete-time model is a time discretization of the continuous-time model, as discussed in Section~\ref{ssec:disc-cts}, the notions of c- and d-detectability should be consistent. We conclude with a remark on this matter.
\begin{remark}\label{rmk:disc-cts}
Introducing the time step $\tau>0$ and let \eqref{def:tau_scale}. Then the discrete Kalman gain $K_d$ becomes
\[K_d(C)=(1+\tau \sF) C \sH^T(\tau \sH C\sH^T+\sG)^{-1}=K_c(C)+O(\tau)\]
where $K_c$ is the continuous time Kalman gain~\eqref{def:Kc}. Then
\[A-K_d(C)H=I+\tau (\sF-K_d(C)\sH)=I+\tau(\sF-K_c(C)\sH)+O(\tau^2).\]
and thus, for sufficiently small $\tau>0$,
\[\rho(A-K_d(C)H)=\rho(I+\tau(\sF-K_c(C)\sH))=1+\tau\alpha(\sF-K_c(C)\sH)+O(\tau^2).\]
\end{remark}

\subsection{Discrete-Time MFEnKF}\label{ssec:disc_time}
Suppose the signal and observation dynamics are given by~\eqref{def:sig-obs;discrete}
\begin{equation*}
\begin{split}
    X_{n+1}^\dag &= AX_n^\dag + \xi_{n+1}^\dag \text{ where } \xi_{n+1}^\dag\sim N(0,\Sigma),\\
    Y_{n+1}^\dag &= HX_n^\dag+\eta_{n+1}^\dag \text{ where } \eta_{n+1}^\dag\sim N(0,\Gamma).
\end{split}
\end{equation*}

Recall that the corresponding the MFEnKF is given by~\eqref{def:MFEnKF;disc}
\begin{align*}
    \hat X_{n+1} &= AX_n+\xi_{n+1},\\
    X_{n+1} &=\hat X_{n+1}-\Cov(\hat X_{n+1}) H^T(H\Cov(\hat X_{n+1}) H^T+\Gamma)^{-1}(H\hat X_{n+1}+\eta_{n+1}-y_{n+1}^\dag);
\end{align*}
in this section we consider observation sequence as deterministic. 
As the updates involve covariances of the random vectors $\{\hat X_{n}\}_{n\geq 1}$ but not those of $\{X_{n}\}_{n\geq 0}$, it is more convenient to take $\hat X_1$ as the starting point of the analysis. Given $X_0\sim\nu$, $\hat X_1^\nu=AX_0+\xi_1$. We note that, given $\Sigma\succ 0$ (which is part of the assumption~\eqref{ass:nondegen;d}), we have $\Cov(\hat X_1^\nu)\succ 0$, and nondegeneracy of the covariance persists for all $\{\hat X_n\}_{n\geq 1}$.

By direct calculations from \eqref{def:MFEnKF;disc}, one can readily verify that the sequence $(\hat X_n)_{n\geq 1}$ satisfies the recursion
\begin{equation}\label{def:MFEnKF;disc;hatX}
\begin{split}
    \hat X_{n+1}&= A \hat X_n+\xi_{n+1}-K_d(\Cov(\hat X_n))(H\hat X_{n}+\eta_{n}-Y_{n}^\dag)
\end{split}
\end{equation} 
where $K_d:\R^{d\times d}_+\rightarrow \R^{d\times k}$ is the discrete Kalman gain defined by
\begin{equation}\label{def:Kd}
        K_d(C)= ACH^T(HCH^T+\Gamma)^{-1},
\end{equation}
and the covariance $\hat C_n=\Cov(\hat X_n)$ satisfies the recursion
\begin{equation}\label{eq:hatC;update;K}
    \hat C_{n+1}=(A-K_d(\hat C_n)H)\hat C_n(A- K_d(\hat C_n)H)^T + \Sigma+K_d(\hat C_n)\Gamma K_d(\hat C_n)^T.
\end{equation}

Note that \eqref{eq:hatC;update;K} is a closed equation of covariances. In particular, if we denote by $(\hat C_{n}^\mu)_{n\geq 0},(\hat C_{n}^\nu)_{n\geq 0}$ the sequence of covariance matrices with initial probability measure $\mu,\nu\in\P_2(\R^d)$ with the identical covariance, we know that
\begin{equation}\label{eq:cov;propagate}
\hat C_n^\mu=\hat C_n^{\nu} \text{ for all } n\geq 0 \text{ if } \Cov(\mu)=\Cov(\nu)
\end{equation}
even if $\mu\neq \nu$; this crucially relies on the linearity of the signal-observation dynamics~\eqref{def:sig-obs;discrete}.

We can leverage \eqref{eq:cov;propagate} to quantify how fast the mean-field EnKF directs probability measures to the Gaussian manifold when the Kalman filter itself is stable. To this end, we compare the evolution of the two solutions of the mean-field EnKF with initial data $\nu$ and its Gaussian projection $\G\nu$, where the Gaussian projection is defined in \eqref{def:G}. Recall that $\G\nu$ shares the mean and the covariance with $\nu$. 

\begin{proposition}\label{prop:gauss_conv;W2;asymp}
    Let $\hat\pi_n^\nu$ be the law of $\hat X_n^\nu$ defined by~\eqref{def:MFEnKF;disc} with initial datum $X_0^\nu\sim \nu\in\P_2(\R^d)$. Suppose Assumption~\ref{ass:disc} holds, which in particular guarantee $\hat C_n^\mu\rightarrow \hat C_\infty$ with the limiting covariance satisfying
    \[\rho(A-K_d(\hat C_\infty)H)<1.\] 
    Then
    \begin{equation}\label{eq:gauss_conv;disc;asymp}
        \left(\frac{W_2(\hat\pi_n^\mu,\hat\pi_n^{\G\mu})}{W_2(\mu,\G\mu)}\right)^{1/n}\leq \left(\frac{W_2(\hat\pi_1^\mu,\hat\pi_1^{\G\mu})}{W_2(\mu,\G\mu)}\norm{\prod_{j=1}^n(A-K_d(\hat C_{n}^\mu)H)}_2\right)^{1/n}\xrightarrow[]{n\rightarrow\infty} \rho(A-K(\hat C_\infty)H),
    \end{equation}
   where $K_d(C)$ is the Kalman gain matrix defined in \eqref{def:Kd}.

\end{proposition}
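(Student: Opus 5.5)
Because the MFEnKF is driven by shared noise, the plan is a synchronous coupling of the two runs from $\mu$ and $\G\mu$, followed by a Gelfand-type spectral-radius argument for the product of transition matrices.

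\emph{Coupling and propagation.} Since $\Cov(\mu)=\Cov(\G\mu)$, the identity \eqref{eq:cov;propagate} gives $\hat C_n^\mu=\hat C_n^{\G\mu}=:\hat C_n$ for all $n$, so both runs of \eqref{def:MFEnKF;disc;hatX} use the same gain $K_n:=K_d(\hat C_n)$. Take any coupling $(\hat X_1^\mu,\hat X_1^{\G\mu})$ that is optimal for $W_2(\hat\pi_1^\mu,\hat\pi_1^{\G\mu})$. Drive both processes with the same noises $\xi_{n+1},\eta_n$ and the same observations $Y_n^\dag$. Subtracting the two recursions cancels every affine and noise term, leaving
\[\hat X_{n+1}^\mu-\hat X_{n+1}^{\G\mu}=(A-K_nH)(\hat X_n^\mu-\hat X_n^{\G\mu}).\]
Hence $\hat X_{n}^\mu-\hat X_{n}^{\G\mu}=\Psi_n(\hat X_1^\mu-\hat X_1^{\G\mu})$, with $\Psi_n$ the ordered product of the matrices $A-K_jH$. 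This is a coupling of $\hat\pi_n^\mu$ and $\hat\pi_n^{\G\mu}$, so
\[W_2(\hat\pi_n^\mu,\hat\pi_n^{\G\mu})\le\|\Psi_n\|_2\,W_2(\hat\pi_1^\mu,\hat\pi_1^{\G\mu}).\]
Dividing by $W_2(\mu,\G\mu)$ and taking $n$-th roots gives the inequality in \eqref{eq:gauss_conv;disc;asymp}. If $W_2(\mu,\G\mu)=0$, then $\mu$ is Gaussian and there is nothing to prove. One also records $W_2(\hat\pi_1^\mu,\hat\pi_1^{\G\mu})\le\|A\|_2W_2(\mu,\G\mu)$, using the same kind of coupling at step zero, so the prefactor is finite and positive and its $n$-th root tends to $1$.

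\emph{Limit of the product.} It remains to show $\|\Psi_n\|_2^{1/n}\to\rho(\Phi_\infty)$, where $\Phi_\infty:=A-K_d(\hat C_\infty)H$. Proposition~\ref{prop:DARE} applies because $\Sigma\succ0$ makes $(A,\Sigma^{1/2})$ controllable. It gives $\Phi_j:=A-K_jH\to\Phi_\infty$, since $K_d$ is continuous on $\R^{d\times d}_+$ when $\Gamma\succ0$, and it gives $\rho(\Phi_\infty)<1$. The standard argument runs as follows. Fix $\varepsilon>0$. Choose a norm $|\cdot|_\ast$ on $\R^d$ whose induced operator norm satisfies $\|\Phi_\infty\|_\ast\le\rho(\Phi_\infty)+\varepsilon$; such a norm exists by the Jordan form, or one can take an adapted weighted norm. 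For $j\ge N(\varepsilon)$ we then have $\|\Phi_j\|_\ast\le\rho(\Phi_\infty)+2\varepsilon$. Splitting the product at $N$ and using equivalence of norms gives
\[\|\Psi_n\|_2\le c_\varepsilon\,\Big(\prod_{j<N}\|\Phi_j\|_2\Big)\,(\rho(\Phi_\infty)+2\varepsilon)^{n-N}.\]
Taking $n$-th roots, $\limsup_n\|\Psi_n\|_2^{1/n}\le\rho(\Phi_\infty)+2\varepsilon$.

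\emph{Main obstacle.} The step needing care is the matching lower bound, which is needed for equality of the limit. It is also where the fact that the $\Phi_j$ are not all equal matters. My plan here is to use invertibility: by \eqref{ass:nondegen;d} $A$ is invertible, and the identity $A-K_d(C)H=A(I+CH^T\Gamma^{-1}H)^{-1}$, which follows from the Woodbury form \eqref{eq:woodbury1}, shows each $\Phi_j$ is invertible. Then $\|\Psi_n\|_2\ge\|\Psi_n v\|_2$ for a vector $v$ chosen so that the backward images of a dominant eigenvector of $\Phi_\infty$ behave well. The lower bound follows from a perturbation argument on the dominant invariant subspace of $\Phi_\infty$, a discrete Perron/Lyapunov exponent argument for asymptotically constant products. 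If that proves delicate, I would settle for the $\limsup$ statement, which is what the inequality chain \eqref{eq:gauss_conv;disc;asymp} actually uses for the asymptotic contraction rate.
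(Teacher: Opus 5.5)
Your synchronous coupling is the same as the paper's. The covariances agree by \eqref{eq:cov;propagate}, all noise and observation terms cancel, and $W_2(\hat\pi_n^\mu,\hat\pi_n^{\G\mu})\le\|\Psi_n\|_2\,W_2(\hat\pi_1^\mu,\hat\pi_1^{\G\mu})$, where $\Psi_n$ is the ordered product of the $\Phi_j=A-K_d(\hat C_j)H$.

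The two arguments differ at the limit. The paper just cites Gelfand's formula $\|B^n\|_2^{1/n}\to\rho(B)$. That formula concerns powers of one fixed matrix and does not by itself handle a product of varying factors. Your adapted-norm splitting at $N(\varepsilon)$ is what actually proves $\limsup_n\|\Psi_n\|_2^{1/n}\le\rho(\Phi_\infty)$, which is the half that carries the contraction-rate content. However, the statement asserts a limit, and you only announce the lower bound. The fallback you mention would prove less than the statement claims.

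The lower bound closes along the lines you suggest.
Put $\rho:=\rho(\Phi_\infty)>0$. Split $\R^d=E^u\oplus E^s$ into the real $\Phi_\infty$-invariant subspaces for eigenvalues of modulus $\rho$ and of modulus $<\rho$. Let $\rho'<\rho$ be the largest modulus occurring in $E^s$ (take $\rho':=0$ if $E^s=\{0\}$).
Choose norms with $|\Phi_\infty u|\ge(\rho-\varepsilon)|u|$ on $E^u$ and $|\Phi_\infty s|\le(\rho'+\varepsilon)|s|$ on $E^s$. Set $|u+s|:=\max(|u|,|s|)$ and $\mathcal K:=\{u+s:\,|s|\le|u|\}$.
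Suppose $4\varepsilon\le\rho-\rho'$ and $\|\Phi_j-\Phi_\infty\|\le\varepsilon$ in the induced operator norm, which holds for $j\ge N$. Then for $v=u+s\in\mathcal K$:
\begin{itemize}
\item the $E^u$-component of $\Phi_jv$ has norm at least $(\rho-2\varepsilon)|u|$;
\item the $E^s$-component has norm at most $(\rho'+2\varepsilon)|u|$.
\end{itemize}
Hence $\Phi_j\mathcal K\subset\mathcal K$ and $|\Phi_jv|\ge(\rho-2\varepsilon)|v|$.

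Your identity $\Phi_j=A(I+\hat C_jH^T\Gamma^{-1}H)^{-1}$, together with \eqref{ass:nondegen;d}, makes $\Phi_1,\dots,\Phi_{N-1}$ invertible. So some $v\neq0$ has $\Phi_{N-1}\cdots\Phi_1v\in\mathcal K\setminus\{0\}$, and therefore $\liminf_n\|\Psi_n\|_2^{1/n}\ge\rho-2\varepsilon$. Invertibility is genuinely needed here: if $\Phi_1$ had range in $E^s$ and $\Phi_j=\Phi_\infty$ afterwards, the rate would be $\rho'$.

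Finally, the $n$-th root of the prefactor tends to $1$ only if the prefactor is positive. You assert this without justification, and the paper does not check it either. It holds because $\hat\pi_1^\mu=(A_\#\mu)*N(0,\Sigma)$ with $A$ invertible, and convolution with a nondegenerate Gaussian is injective. Thus $\mu\neq\G\mu$ forces $\hat\pi_1^\mu\neq\hat\pi_1^{\G\mu}$.
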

\begin{proof}
    As $\Cov(\mu)=\Cov(\G\mu)$ by construction, we know that the sequences of covariance matrices $(\hat C_{n}^\mu)_{n\geq 0},(\hat C_n^{\G\mu})_{n\geq 0}$ coincide. 
Now consider the sequences of random vectors $(\hat X_n^\mu)_{n}$ and $(\hat X_n^{\G\mu})_{n}$ \eqref{def:MFEnKF;disc} with initial law $\hat X_1^\mu\sim\hat\pi_1^\mu$ and $\hat X_1^{\G\mu}\sim \hat\pi_1^{\G\mu}$. As the covariance matrices coincide,
    \begin{equation}\label{eq:hatx_diff}
    \hat X_{n}^\mu-\hat X_{n}^{\G\mu}=(A-K_d(\hat C_{n}^\mu)H)(\hat X_{n-1}^\mu-\hat X_{n-1}^{\G\mu})-K_d(\hat C_{n})(\eta_{n}^\mu-\eta_{n}^{\G\mu})+\xi_{n}^\mu-\xi_{n}^{\G\mu}.
    \end{equation}
    Now fix the coupling $\hat\gamma_1$ of $\hat X_{1}^\mu$,$\hat X_1^{\G\mu}$ such that
    \[\E^{(\hat X_1^\mu,\hat X_{1}^{\G\mu})\sim\hat\gamma_1}|\hat X_1^{\mu}-\hat X_1^{\G\mu}|^2=W_2(\hat\pi_1^\mu,\hat\pi_1^{\G\mu}),\]
    and $\xi_{j}^\mu=\xi_j^{\G\mu}$ and $\eta_j^\mu=\eta_j^{\G\mu}$ for all $j\leq n+1$
    As $\eta_{j}^\mu=,\eta_{j}^{\G\mu},\xi_{j}^\mu,\xi_{j}^{\G\mu}$ for all $j\geq 0$ we can construct a coupling $\hat\gamma_{n+1}$ with the marginals
    \begin{align*}
        (\hat X_n^\mu,\hat X_n^{\G\mu})\sim\hat\gamma_n, \eta_{j}^\mu=\eta_{j}^{\G\mu}, \xi_{j}^{\mu}=\xi_{j}^{\G\mu} \text{ for all } j\leq n+1.
    \end{align*}
    Then
    \begin{align*}
        W_2^2(\hat\pi_{n}^\mu,\hat\pi_{n}^{\G\mu})&\leq \E^{(\hat X_{n}^\mu,\hat X_{n}^{\G\mu})\sim\hat\gamma_{n}}|\hat X_{n}^{\mu}-\hat X_{n}^{\G\mu}|^2
        =\E^{\hat\gamma_{n}}|(A-K_d(\hat C_{n}^\mu)H)(\hat X_{n-1}^{\mu}-\hat X_{n-1}^{\G\mu})|^2 \\
        &=\E^{\hat \gamma_{n}} \left|\left(\prod_{j=1}^{n+1}(A-K_d(\hat C_j^\mu)H)\right)(\hat X_0^\mu-\hat X_0^{\G\mu})\right|^2 \\
        &\leq\norm{\prod_{j=1}^{n}(A-K_d(\hat C_j^\mu)H)}_2^2\E^{\hat\gamma_{n}}|\hat X_0^\mu-\hat X_0^{\G\mu}|^2\\
        &= \norm{\prod_{j=1}^{n}(A-K_d(\hat C_j^\mu)H)}_2^2 W_2^2(\mu,\G\mu).
    \end{align*}
    By taking square root, we obtain 
    \[W_2(\hat\pi_n^\mu,\hat\pi_n^{\G\mu})\leq \norm{\prod_{j=1}^n(A-K_d(\hat C_{n}^\mu)H)}_2 W_2(\hat\pi_1^\mu,\hat\pi_1^{\G\mu}).\]
    The statement \eqref{eq:gauss_conv;disc;asymp} is then a consequence of the Gelfand's formula $\norm{B^n}_2^{1/n}\xrightarrow[]{n\rightarrow\infty} \rho(B)$ for any matrix $B$, and that $(W_2(\hat\pi_1^\mu,\hat\pi_1^{\G\mu})/W_2(\mu,\nu))$ is bounded hence its $n^{th}$ root converges to 1.
\end{proof}

While Proposition~\ref{prop:gauss_conv;W2;asymp} tells us that the MFEnKF exponentially contracts toward the Gaussian manifold \emph{asymptotically} as $n\rightarrow\infty$, as discussed in the beginning of the section, precise nonasymptotic bounds are desirable. As $\rho(A-K_d(\hat C_\infty)H)$ \emph{ does not }imply $\|A-K_d(\hat C_\infty)H\|_2<1$, without considerably strengthening the assumptions we cannot hope to deduce a nonasymptotic result from the argument in the proof of Proposition~\ref{prop:gauss_conv;W2;asymp}.

Instead, we use the covariance-weighted Wasserstein distances used in the variational derivation of the EnKF in Section~\ref{sec:EnKF;var_der} to obtain a nonasymptotic bound. To this end, we will need the following technical lemma.
\begin{lemma}\label{lem:lyapunov;matrices}
    Let $A\in\R^{d\times d}$ be an invertible matrix, and let $\Ricc_d$ be the discrete Riccati operator defined in~\eqref{def:Riccd}. Given $\hat C_n\in\R_{++}^{d\times d}$, we have $\Ricc_{d}(\hat C_n)\in\R_{++}^{d\times d}$, and 
    \begin{equation}\label{eq:lyapunov;matrices}
    \begin{split}
        &(A-K_d(\hat C_n) H)^T \Ricc_d(\hat C_n)^{-1} (A-K_d(\hat C_n)H) \\&= \hat C_n^{-1} 
        - H^T(H \hat C_n H^T+\Gamma)^{-1}H-\hat C_n^{-1}(\hat C_n^{-1}+H^T\Gamma^{-1}H+A^T\Sigma^{-1} A)^{-1}\hat C_n^{-1}.
    \end{split}
    \end{equation}
\end{lemma}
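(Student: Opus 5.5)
The plan is a direct matrix computation. The key is to introduce the analysis (posterior) covariance associated with $\hat C_n$, so that both the transition matrix and $\Ricc_d(\hat C_n)$ factor through it. Write $C:=\hat C_n\succ 0$, $S:=HCH^T+\Gamma\succ 0$, and
\[
P:=C-CH^TS^{-1}HC=(C^{-1}+H^T\Gamma^{-1}H)^{-1},
\]
where the second expression follows from the Woodbury identity \eqref{eq:woodbury}, exactly as in \eqref{eq:woodbury1} from the proof of Theorem~\ref{thm:MFEnKF;var}. In particular $P\succ 0$.

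\emph{Step 1: factor the transition matrix and the Riccati operator through $P$.} From \eqref{def:Kd},
\[
A-K_d(C)H=A\bigl(I-CH^TS^{-1}H\bigr)=APC^{-1}.
\]
Also, directly from \eqref{def:Riccd}, $\Ricc_d(C)=APA^T+\Sigma$. Since $A$ is invertible and $P\succ 0$, we have $APA^T\succ 0$. Together with $\Sigma\succeq 0$ (indeed $\Sigma\succ0$ under \eqref{ass:nondegen;d}), this gives $\Ricc_d(C)\succ 0$, which is the first claim.

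\emph{Step 2: rewrite the left-hand side.} The left-hand side of \eqref{eq:lyapunov;matrices} becomes
\[
C^{-1}P\,A^T(APA^T+\Sigma)^{-1}A\,PC^{-1}.
\]
Using invertibility of $A$, the middle factor is
\[
A^T(APA^T+\Sigma)^{-1}A=(P+A^{-1}\Sigma A^{-T})^{-1}.
\]
This is the only place where invertibility of $A$ is essential. Applying Woodbury once more, with $\Sigma$ invertible, gives
\[
(P+A^{-1}\Sigma A^{-T})^{-1}=P^{-1}-P^{-1}(P^{-1}+A^T\Sigma^{-1}A)^{-1}P^{-1}.
\]
Hence
\[
P\,A^T(APA^T+\Sigma)^{-1}A\,P=P-(P^{-1}+A^T\Sigma^{-1}A)^{-1}.
\]

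\emph{Step 3: conjugate by $C^{-1}$ and simplify.} For the first term,
\[
C^{-1}PC^{-1}=C^{-1}\bigl(C-CH^TS^{-1}HC\bigr)C^{-1}=C^{-1}-H^T(HCH^T+\Gamma)^{-1}H.
\]
For the second term, substituting $P^{-1}=C^{-1}+H^T\Gamma^{-1}H$ gives exactly
\[
C^{-1}\bigl(C^{-1}+H^T\Gamma^{-1}H+A^T\Sigma^{-1}A\bigr)^{-1}C^{-1}.
\]
Combining the two terms yields \eqref{eq:lyapunov;matrices}.

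There is no genuine obstacle here. The only delicate points are choosing the right grouping in Step 2, namely pulling $A$ inside the inverse rather than expanding $\Ricc_d$ in its original form, and keeping track of which matrices are invertible: $C$, $P$, $A$, $\Sigma$ and $S$ all are, under the hypotheses together with Assumption~\ref{ass:disc}.
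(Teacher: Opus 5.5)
Your proof is correct and follows essentially the same route as the paper. Your $P$ is the paper's analysis covariance $C_n=(I-K_nH)\hat C_n=(\hat C_n^{-1}+H^T\Gamma^{-1}H)^{-1}$, and the paper likewise uses invertibility of $A$ to write $A^T(AC_nA^T+\Sigma)^{-1}A=(C_n+A^{-1}\Sigma A^{-T})^{-1}$ and then applies Woodbury; your factorization $A-K_d(C)H=APC^{-1}$ plays the role of the paper's identity $(I-K_nH)^TC_n^{-1}(I-K_nH)=\hat C_n^{-1}(I-K_nH)$.
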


\begin{proof}
    For simplicity, let us temporarily write
    \[K_n= \hat C_n H^T(H \hat C_n H^T+\Gamma)^{-1}.\]
    Then we can the recursion $\hat C_{n+1}=\Ricc_d(\hat C_n)$ in two steps,
    \[C_n=(I-K_n H)\hat C_n,\quad \hat C_{n+1}=A C_n A^T+\Sigma;\]
    by the Woodbury matrix identity, the first update is equivalent to
    \[C_n^{-1}=\hat C_n^{-1}+H^T\Gamma^{-1}H,\]
    thus $\hat C_n\in\R_{++}^{d\times d}$ implies $C_n\in\R_{++}^{d\times d}$, and from $\Sigma\succ0$ follows that $\hat C_{n+1}$ is also invertible.
    
    We claim
    \begin{equation}\label{eq:ATCA}
        A^T\hat C_{n+1}^{-1} A= C_n^{-1}-C_n^{-1}(C_n^{-1}+A^T\Sigma^{-1}A)^{-1}C_n^{-1},
    \end{equation}
    and
    \begin{equation}\label{eq:MCM}
        (I-K_n H)^TC_n^{-1} (I-K_nH) =\hat C_{n}^{-1}(I-K_n H).
    \end{equation}
    Indeed, \eqref{eq:ATCA} follows by applying the Woodbury matrix identity~\eqref{eq:woodbury} three times; we use the identity with the different notation
    \[(W+UZV)^{-1}=W^{-1}-W^{-1}U(Z^{-1}+V W^{-1} U)^{-1}VW^{-1}\]
    to avoid conflict with existing notation. First, apply the above with $W=\Sigma$, $U=A$, $V=A^T$, $Z=C_n$ and then with $W=A^{-1}\Sigma A^{-T}$ $U=V=I$ and $Z=C_n$, we deduce
    \begin{align*}
        A^T\hat C_{n+1}^{-1}A&=A^T(AC_n A^T+\Sigma)^{-1} A
        = A^T\Sigma^{-1}A-A^T \Sigma^{-1} A(C_n^{-1}+A^T\Sigma^{-1}A)^{-1}A^T\Sigma^{-1}A  \\
        &=(A^{-1}\Sigma A^{-T}+C_n)^{-1}.
    \end{align*}
    Finally, applying the Woodbury matrix identity with $U=V=I$, $Z=A^{-1}\Sigma A^{-T}$, and $W=C_n$, we conclude \eqref{eq:ATCA}.

    To see \eqref{eq:MCM}, we use $C_n=(I-K_n H)\hat C_n$ and note
    \begin{align*}
        (I-K_n H)^T C_n^{-1} (I-K_n H) = (I-K_n H)^T \hat C_{n}^{-1},
    \end{align*}
    and we can take the transpose to obtain \eqref{eq:MCM}.

    Thus,
    \begin{align*}
        &(A-K_d(\hat C_n) H)^T \Ricc_d(\hat C_n)^{-1} (A-K_d(\hat C_n)H)=(I-K_n H)^T A^T \hat C_{n+1}^{-1} A (I-K_n H)\\
        &=(I-K_n H)^T C_n^{-1} (I-K_n H)
        -(I-K_n H)^T C_n^{-1}(C_n^{-1}+A^T\Sigma^{-1}A)^{-1} C_n^{-1} (I-K_n H)\\
        &= \hat C_n^{-1} (I-K_n H)- \hat C_n^{-1}(C_n^{-1}+A^T\Sigma^{-1} A)^{-1}\hat C_n^{-1}\\
        &=\hat C_n^{-1}- H^T(H\hat C_n H^T+\Gamma)^{-1} H - \hat C_n^{-1}( C_n^{-1}+A^T\Sigma^{-1} A)^{-1}\hat C_n^{-1};
    \end{align*}
    we have used \eqref{eq:ATCA} for the second inequality, \eqref{eq:MCM} and the definition of $C_n$ in the third, and expanded the terms in the fourth. Finally, applying $C_n^{-1}=\hat C_n^{-1}+H^T\Gamma^{-1}H$ to the last expression, we deduce \eqref{eq:lyapunov;matrices}.
\end{proof}

\begin{theorem}\label{thm:gauss_lyapunov;disc}
Suppose $A,\Sigma\in\R^{d\times d}$, $H\in\R^{k\times d}$, $\Gamma\in\R^{k\times k}$ satisfy Assumption~\ref{ass:disc}. Let $\mu,\nu\in\P_2(\R^d)$ with $\Cov(\mu)=\Cov(\nu)$, and let $\hat\pi_n^\nu,\hat\pi_n^\mu$ be the law of $\hat X_n^\nu,\hat X_n^\mu$ defined by~\eqref{def:MFEnKF;disc} with initial data $X_0^\nu\sim \nu,X_0^\mu\sim\mu$. Recall that $\hat C_{n}^\mu:=\Cov(\hat\pi_{n}^\mu)=\Cov(\hat\pi_n^\nu)$ and they are independent of the observation sequence $(y_n^\dag)_{n\geq 1}$.

We have
\begin{equation}\label{eq:gauss_conv;Lyapunov;disc}
    W_2^2(\hat\pi_{n+1}^\mu,\hat\pi_{n+1}^\nu;\hat C_{n+1}^\mu)\leq (1-\beta(\hat C_n))W_2^2(\hat\pi_n^\mu,\hat\pi_n^\nu;\hat C_{n}^\mu)
\end{equation}
where $\beta_d:\R^{d\times d}_{++}\rightarrow \R_{++}$ is given by
\begin{equation}\label{def:beta}
    \beta_d(\hat C)=\lambda_{\min}\left(\hat C^{1/2}H^T(H\hat C H^T+\Gamma)^{-1}H\hat C^{1/2}+\hat C^{-1/2}(\hat C^{-1}+H^T\Gamma^{-1}H+A^T\Sigma^{-1}A)^{-1}\hat C^{-1/2}\right).
\end{equation}
In particular, at the stable covariance matrix $\hat C_\infty\succ 0$, it simplifies to
\begin{equation}\label{eq:betan;Cinfty}
    \beta_d(\hat C_\infty)=\lambda_{\min}(\hat C_\infty^{-1/2}(\Sigma+K_d(\hat C_\infty)\Gamma K_d(\hat C_\infty)^T)\hat C_\infty^{-1/2}).
\end{equation}
Moreover, for all $n$ $\beta_d(\hat C_n)\geq \beta_d^0>0$ for some $\beta_d^0$ that may depend on $\Cov(\mu)$, and
\begin{equation}\label{eq:gauss_conv;disc;exp}
    W_2^2(\hat\pi_{n+1}^\mu,\hat\pi_{n+1}^\nu)\leq (1-\beta_d^0)^n W_2^2(\hat\pi_1^\mu,\hat\pi_1^\nu).
\end{equation}

\end{theorem}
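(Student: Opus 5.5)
We will use a synchronous coupling in the covariance-weighted metric, with Lemma~\ref{lem:lyapunov;matrices} supplying the one-step contraction of the weighted norm. Since $\Cov(\mu)=\Cov(\nu)$, \eqref{eq:cov;propagate} shows that both flows share the covariance sequence $\hat C_n:=\hat C_n^\mu$. Hence both are driven by the same gain $K_n:=K_d(\hat C_n)$ in \eqref{def:MFEnKF;disc;hatX}. Take an optimal coupling $\hat\gamma_n\in\Pi_o(\hat\pi_n^\mu,\hat\pi_n^\nu;\hat C_n)$ of $(\hat X_n^\mu,\hat X_n^\nu)$, and use the same noises $\xi_{n+1},\eta_n$ and the same observation $y_n^\dag$ for both. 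The noises and data then cancel, exactly as in \eqref{eq:hatx_diff}, giving
\[\hat X_{n+1}^\mu-\hat X_{n+1}^\nu=\Phi_n(\hat X_n^\mu-\hat X_n^\nu),\qquad \Phi_n:=A-K_nH .\]
This produces a coupling of $\hat\pi_{n+1}^\mu,\hat\pi_{n+1}^\nu$, so
\[W_2^2(\hat\pi_{n+1}^\mu,\hat\pi_{n+1}^\nu;\hat C_{n+1})\le \E\big[(\hat X_n^\mu-\hat X_n^\nu)^T\Phi_n^T\hat C_{n+1}^{-1}\Phi_n(\hat X_n^\mu-\hat X_n^\nu)\big].\]

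Lemma~\ref{lem:lyapunov;matrices} (here $A$ invertible and $\Sigma\succ0$ from \eqref{ass:nondegen;d} are used) gives
\[\Phi_n^T\hat C_{n+1}^{-1}\Phi_n=\hat C_n^{-1}-\hat C_n^{-1/2}B_n\hat C_n^{-1/2},\]
where $B_n$ is the matrix inside $\lambda_{\min}$ in \eqref{def:beta} with $\hat C=\hat C_n$. Then $B_n\succeq\beta_d(\hat C_n)I$, so $\Phi_n^T\hat C_{n+1}^{-1}\Phi_n\preceq(1-\beta_d(\hat C_n))\hat C_n^{-1}$. Inserting this bound and using optimality of $\hat\gamma_n$ yields \eqref{eq:gauss_conv;Lyapunov;disc}. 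Positivity of $\beta_d$ holds because the second summand of $B_n$ is positive definite. The bound $\beta_d<1$ (indeed $\le1$) follows since the left side of the lemma is positive semidefinite.

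For \eqref{eq:betan;Cinfty}, evaluate at the fixed point. From \eqref{eq:hatC;update;K}, $\hat C_\infty=\Phi_\infty\hat C_\infty\Phi_\infty^T+\Sigma+K\Gamma K^T$. Conjugating by $\hat C_\infty^{-1/2}$ gives
\[I-S S^T=\hat C_\infty^{-1/2}(\Sigma+K\Gamma K^T)\hat C_\infty^{-1/2},\qquad S:=\hat C_\infty^{-1/2}\Phi_\infty\hat C_\infty^{1/2}.\]
On the other hand, the lemma reads $S^TS=I-B_\infty$. Since $SS^T$ and $S^TS$ have the same spectrum, $\lambda_{\min}(B_\infty)=1-\lambda_{\max}(S^TS)=1-\lambda_{\max}(SS^T)$, which is the claimed formula.

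The uniform lower bound comes from continuity and compactness. The map $\hat C\mapsto\beta_d(\hat C)$ is continuous and positive on $\R^{d\times d}_{++}$. By Proposition~\ref{prop:DARE}, under \eqref{ass:detectable;d} and controllability (implied by $\Sigma\succ0$), $\hat C_n\to\hat C_\infty\succ0$. Moreover each $\hat C_n\succ0$, since $\hat C_n\succeq\Sigma$ for $n\ge1$. The set $\{\hat C_n\}\cup\{\hat C_\infty\}$ is therefore a compact subset of $\R^{d\times d}_{++}$, and $\beta_d^0:=\inf_n\beta_d(\hat C_n)>0$. Iterating the contraction gives the weighted version of \eqref{eq:gauss_conv;disc;exp}.

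Passing to the unweighted $W_2$ then costs the constant $\lambda_{\max}(\hat C_{n+1})\lambda_{\max}(\hat C_1^{-1})$, which is bounded uniformly in $n$ by the same compactness argument. As literally stated, \eqref{eq:gauss_conv;disc;exp} presumably intends either this constant or the weighted distances; I would prove it with the constant made explicit. The main obstacle I expect is exactly this bookkeeping: checking the eigenvalue identity at $\hat C_\infty$ and the norm-equivalence constants. The contraction step itself is immediate from Lemma~\ref{lem:lyapunov;matrices}.
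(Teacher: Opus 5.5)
Your proof is correct and is essentially the paper's own argument: a synchronous coupling so that the difference evolves by $\Phi_n=A-K_d(\hat C_n)H$, Lemma~\ref{lem:lyapunov;matrices} for the one-step weighted contraction, the $SS^T$ versus $S^TS$ spectral identity at the fixed point for \eqref{eq:betan;Cinfty}, and continuity of $\beta_d$ along $\hat C_n\to\hat C_\infty\succ0$ for the uniform bound. Your caveat about \eqref{eq:gauss_conv;disc;exp} is justified, because the paper's final display silently drops the $\hat C_n$-weight and the unweighted bound without a constant fails in general: if $\nu$ is the translate of $\mu$ by $w$, the flows remain translates, with $W_2(\hat\pi_1^\mu,\hat\pi_1^\nu)=|u|$ and $W_2(\hat\pi_2^\mu,\hat\pi_2^\nu)=|\Phi_1 u|$ for $u=Aw$, and e.g.\ $d=1$, $A=2$, $H=\Sigma=1$, $\Gamma=100$, $\mu=\delta_0$ gives $\Phi_1=200/101>1$, so your explicit constant (or staying with the weighted distances) is the right fix.
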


\begin{proof}
    By \eqref{def:MFEnKF;disc;hatX},
    \[
    \hat X_{n+1}^\mu-\hat X_{n+1}^{\nu}=(A-K_d(\hat C_{n+1}^\mu)H)(\hat X_{n}^\mu-\hat X_{n}^{\nu})-K_d(\hat C_{n+1})(\eta_{n+1}^\mu-\eta_{n+1}^{\nu})+\xi_{n+1}^\mu-\xi_{n+1}^{\nu}.
    \]
    Now fix the coupling $\hat\gamma_n$ of $\hat X_{n}^\mu$,$\hat X_n^{\G\mu}$ such that
    \[\E^{(\hat X_n^\mu,\hat X_{n}^{\nu})}|\hat X_n^{\mu}-\hat X_n^{\nu}|_{\hat C_n^\mu}^2=W_2^2(\hat\pi_n^\mu,\hat\pi_n^{\nu};\hat C_n^\mu).\]
    As $\eta_{n+1}^\mu,\eta_{n+1}^{\nu},\xi_{n+1}^\mu,\xi_{n+1}^{\nu}$, all are independent from each other and $\hat X_{n}^\mu,\hat X_{n}^\nu$, we can construct a coupling $\hat\gamma_{n+1}$ of $\hat\pi_{n+1}^\mu,\hat\pi_{n+1}^\nu$ under which $\eta_{n+1}^\mu=\eta_{n+1}^\nu$, $\xi_{n+1}^\mu=\xi_{n+1}^\nu$, and has the marginals $(\hat X_n^\mu,\hat X_n^{\nu})\sim\hat\gamma_n$.
    Under this coupling, all the randomness cancels out and
    \[\hat X_{n+1}^\mu-\hat X_{n+1}^\nu = (A-K_d(\hat C_n))(\hat X_n^\mu-\hat X_n^\nu).\]
    Moreover, thanks to the nondegeneracy of $\Sigma$, $\hat C_1\succ 0$, and thus by Lemma~\ref{lem:lyapunov;matrices} $\hat C_n\succ 0$ for all $n\geq 1$. Thus we can apply \eqref{eq:lyapunov;matrices} to obtain
    \begin{align*}
        &W_2^2(\hat\pi_{n+1}^\mu,\hat\pi_{n+1}^\nu;\hat C_{n+1})
        \leq \E^{(\hat X_{n+1}^\mu,\hat X_{n+1}^\nu)\sim\hat\gamma_{n+1}}|\hat X_{n+1}^\mu-\hat X_{n+1}^{\nu}|_{\hat C_{n+1}}^2 \\
        &=\E^{(\hat X_{n}^\mu,\hat X_{n}^\nu)\sim\hat\gamma_n} (\hat X_n^\mu-\hat X_n^\nu)^T(A-K_d(\hat C_{n}))^T \Ricc_d(\hat C_n)^{-1} (A-K_d(\hat C_n))(\hat X_n^\mu-\hat X_n^\nu)    \\
        &=\E^{(\hat X_{n}^\mu,\hat X_{n}^\nu)\sim\hat\gamma_n} (\hat X_n^\mu-\hat X_n^\nu)^T(\hat C_n^{-1}-M_n) (\hat X_n^\mu-\hat X_n^\nu),
    \end{align*}
    where
    \begin{align*}
        M_n:= H^T(H \hat C_n H^T+\Gamma)^{-1}H+\hat C_n^{-1}(\hat C_n^{-1}+H^T\Gamma^{-1}H+A^T\Sigma^{-1} A)^{-1}\hat C_n^{-1}.
    \end{align*}
    Note 
    \begin{align*}
        \hat C_{n}^{1/2}M_n\hat C_n^{1/2}\geq \beta I 
    \end{align*}
    for some $\beta>0$ if and only if $M_n\geq \beta \hat C_n^{-1}$. Thus, by the definition \eqref{def:beta} of $\beta_d:\R_{++}^{d\times d}\rightarrow\R_{++}$, we obtain
    \begin{align*}
        W_2^2(\hat\pi_{n+1}^\mu,\hat\pi_{n+1}^\nu;\hat C_{n+1})
        &\leq (1-\beta_d(\hat C_n))\E^{(\hat X_{n}^\mu,\hat X_{n}^\nu)\sim\hat\gamma_n} (\hat X_n^\mu-\hat X_n^\nu)^T\hat C_n^{-1} (\hat X_n^\mu-\hat X_n^\nu)\\
        &=(1-\beta_d(\hat C_n))W_2^2(\hat\pi_n^\mu,\hat\pi_n^\nu).
    \end{align*}
    Clearly $\hat C\mapsto \beta_d(\hat C)$ is continuous. As the detectability condition and $\sSig\succ 0$ guarantees that $\hat C_n\rightarrow \hat C_\infty \in\R^{d\times d}_{++}$, thus $\beta_d(\hat C_n^\mu)$ is bounded away from zero uniformly in $n$.
    
    It remains to show \eqref{eq:betan;Cinfty}. Using that $\Ricc_d(\hat C_\infty)=\hat C_\infty$, $\beta_\infty:=\beta(\hat C_\infty)$ can be equivalently characterized as the largest $\beta>0$ satisfying
    \[\hat C_\infty^{1/2}(A-K_d(\hat C_\infty)H)^T\hat C_\infty^{-1} (A-K_d(\hat C_\infty) H)\hat C_\infty^{1/2}\leq (1-\beta)I.\]
    Writing $\Phi_\infty:=(A-K_d(\hat C_\infty)H)$, this is equivalent to
    \[\lambda_{\max}[(\hat C_\infty^{-1/2}\Phi_\infty \hat C_\infty^{1/2})(\hat C_\infty^{-1/2}\Phi_\infty \hat C_\infty^{1/2})^T]=\lambda_{\max}[(\hat C_\infty^{-1/2}\Phi_\infty \hat C_\infty^{1/2})^T(\hat C_\infty^{-1/2}\Phi_\infty \hat C_\infty^{1/2})]\leq (1-\beta_\infty).\]
    As $\hat C_\infty=\Ricc_d(\hat C_\infty)$ can be rewritten as
    \begin{align*}
        \hat C_\infty = \Phi_\infty \hat C_\infty \Phi_\infty^T + \Sigma+K_d(\hat C_\infty)\Gamma K_d(\hat C_\infty),
    \end{align*}
    by multiplying on the left and right by $\hat C_\infty^{-1/2}$,
    \[I=(\hat C_\infty^{-1/2}\Phi_\infty \hat C_\infty^{1/2})(\hat C_\infty^{-1/2}\Phi_\infty \hat C_\infty^{1/2})^T+\hat C_\infty^{-1/2}(\Sigma+K_d(\hat  C_\infty)\Gamma K_d(\hat C_\infty))\hat C_\infty^{-1/2}.\]
    Thus
    \[\beta_\infty=\lambda_{\min}(\hat C_\infty^{-1/2}(\Sigma+K_d(\hat  C_\infty)\Gamma K_d(\hat C_\infty))\hat C_\infty^{-1/2}).\]
\end{proof}

\begin{remark}\label{rmk:beta_d}
    We make a few remarks on the exponent $\beta_d(C)$ defined in~\eqref{def:beta}. Note $\beta_d(C)\rightarrow\infty$
    By the monotonicity of the Riccati recursions, once $\hat C_{n+1}=\Ricc_d(\hat C_n)$ is larger (resp. smaller) in Loewner order than $\hat C_n$, the monotonicity persists; see Theorems 10.6-7 and Theorems 10.11-12 of Bitmead and Gever~\cite{bitmead1991riccati}. Thus, if one can construct $\tilde C_0\preceq \hat C_1^\mu$ that satisfies $\Ricc_d(\tilde C_0)\succeq \tilde C_0$, then by the comparison principle we may deduce $\hat C_n^\mu\preceq \tilde C_0$ for all $n\geq 1$; a similar argument works for a lower barrier. Then we can use these barriers to obtain a more constructive uniform lower bound on $\beta(\hat C_n^\mu)$.

    However, Riccati recursions are uniform strict contractions in suitable metric (see Bougerol~\cite{Bougerol93}) hence the convergence to $\hat C_\infty$ happens rapidly, thus $\beta_d(\hat C_\infty)$ is much more relevant in characterizing the exponential rate of convergence. For this reason, we do not pursue making the lower bound $\beta_d^0$ more explicit.
\end{remark}

Let us comment on the relationship between the MFEnKF distributions $(\pi_n^\mu)_{n\in\N}$~\eqref{eq:MFEnKF;algo} with the optimal filtering distributions $(\bar\pi_n^\nu)_{n\in\N}$~\eqref{def:barpi_n}.
We have shown that $(\hat\pi_n^\mu)_{n\in\N}$ exponentially contracts towards the Gaussian manifold, where the MFEnKF distributions coincides with the optimal filtering distributions. Thus, once we know $\bar\pi_n^\nu$ is a.s. asymptotically stable with respect to the initial data, we can expect
\[\pi_n^\mu-\bar\pi_n^\nu\xrightarrow[]{n\rightarrow\infty} 0 \text{ for almost every sample path of } (Y_j^\dag)_{j\in\N}.\]
Unfortunately, to the authors knowledge, a.s. asymptotic stability of the optimal filter in discrete time does not seem to be recorded in the literature. However, thanks to the works of Ocone and Pardoux~\cite{OconePardeoux96asymptotic}, we will be able to establish a slightly weaker statement than above in the continuous-time setting; see Corollary~\ref{cor:MFEnKF_accurate;cts}.

We conclude this section with a related remark.
\begin{remark}\label{rmk:accuracy;discrete}
Makowski and Sowers~\cite[Theorem 7.4]{SowersMakowski90} showed that the conditional expectation $\E[\bar X_n^\nu|(Y_j^\dag)_{j\leq n}]$ exponentially converges to $\E[\bar X_n^{\G\nu}|(Y_j^\dag)_{j\leq n}]$ in $L^2$. More precisely,
\[\limsup_{n\rightarrow\infty}\frac1n\log\E |\bar\pi_n^\nu(\id)-\bar \pi_n^{\G\nu}(\id)|^2\leq \log\rho(A-K_d(\hat C_\infty)H),\]
where the expectation is over $(Y_j^\dag)_{j}$. Thanks to the exponential rate, we can apply the Borel-Cantelli Lemma (cf. proof of Theorem 2.3 of Ocone and Pardoux~\cite{OconePardeoux96asymptotic}) to deduce that the convergence is guaranteed for almost every sample path of $(Y_j^\dag)_{j\leq n}$. 

As the continuous-time result of Ocone and Pardoux adapts and improves Makowski and Sowers' results in the continuous-time setting, one can expect the following analogue of \cite[Theorem 2.6]{OconePardeoux96asymptotic} to hold in the discrete time setting: for any bounded uniformly continuous $\varphi:\R^d\rightarrow\R$,
\[|\bar\pi^\nu_n(\varphi)-\bar\pi_n^{\G\nu}(\varphi)|\rightarrow 0 \text{ almost surely, as } n\rightarrow\infty.\]

If so, as $\bar\pi_n^{\G\nu}=\pi_n^{\G\nu}$, we can expect
\begin{align*}
\pi_n^{\mu}(\varphi)-\bar\pi_n^\nu(\varphi)=(\pi_n^\mu-\pi_n^{\G\mu})(\varphi)+(\pi_n^{\G\mu}-\pi_n^{\G\nu})(\varphi)+(\bar\pi_n^\nu-\bar\pi_n^{\G\nu})(\varphi)\xrightarrow[]{n\rightarrow\infty} 0 \text{ a.s. }
\end{align*}
However, while $\pi_n^{\G\mu}-\pi_n^{\G\nu}$ should vanish almost surely as $n$ grows, we could not locate such a result (namely the almost sure asymptotic stability of the Kalman filter in discrete time) in the literature.
\end{remark}

\subsection{Continuous time mean-field EnKF}\label{ssec:EnKF;cts}

In this section we consider the continuous-time signal and observation model~\eqref{eq:linearSigObs;cts}
\begin{equation*}
\begin{split}
    dX_t^\dag &= \sF X_t^\dag\;dt +\sqrt{\sans{\Sigma}}\;dB_t^\dag,\\
    dZ_t^\dag &= \sH X_t^\dag\;dt + \sqrt{\sG}\;dW_t^\dag;
\end{split}
\end{equation*}
see Section~\ref{ssec:disc-cts} for the connection to the discrete-time model.
Recall that the corresponding continuous-time mean-field ensemble Kalman filter is given by~\eqref{eq:EnKF;mf;cts}
\begin{equation*}
    d X_t=\sF X_t\;dt + \sqrt{\sans{\Sigma}}\;dB_t - \Cov(\pi_t)\sH^T\sG^{-1}(\sH X_t\;dt+\sqrt{\sG}dW_t- dZ_t^\dag),
\end{equation*}
where $\pi_t$ is the conditional law of $X_t$ given $(Z_s^\dag)_{s\leq t}$, which also depends on the initial law of $X_0$. For this reason, we use the superscript $\dag$  to separate the observation $(Z_t^\dag)_t$ from other sources of randomness, the two independent copies of the Brownian motion $(B_t)_{t\geq 0}$, $(W_t)_{t\geq 0}$ which are also independent from the observation.

Thus, for each $t>0$ the measure $\pi_t$ is a random measure, and
we are interested its pathwise almost sure behavior -- i.e, for almost every sample path of $(Z_t^\dag)_{t\geq 0}$ how does $(\pi_t)_{t\geq 0}$ behave depending on its initial datum? This is motivated by the fact that in practice we are provided with a realization of the observation path, hence the randomness associated to it cannot be `averaged out', as for the Brownian noise $(B_t)_{t\geq 0},(W_t)_{t\geq 0}$ which are (approximately) generated by the algorithms.

Towards the end of this section, we will study the accuracy of \eqref{eq:EnKF;mf;cts} by comparing the MFEnKF distributions $(\pi_t^\mu)_{t\geq 0}$ with the optimal filtering distributions $(\bar\pi_t^\nu)_{t\geq 0}$.

Henceforth, we will overload the notation for the continuous Kalman gain~\eqref{def:Kc} and write 
\begin{equation}\label{def:Kc;pi}
    K_c(\pi):=\Cov(\pi)\sH^T\sG^{-1}
\end{equation}
We begin with the continuous-time analogue of Theorem~\ref{thm:gauss_lyapunov;disc}.

\begin{theorem}\label{thm:gauss_lyapunov;cts}
    Suppose $\sF,\sSig\in\R^{d\times d}$, $\sH\in\R^{k\times d}$, $\sG\in\R^{k\times k}$ satisfy Assumption~\ref{ass:cts}.

    Given $\nu\in\P_2(\R^d)$, let $(X_t^\nu)_{t\geq 0}$ be the random vector defined by \eqref{eq:EnKF;mf;cts} with the initial datum $X_0\sim\nu$, and $\pi_t^\nu$ its conditional law given $(Z_t^\dag)_{t\geq 0}$.
    Recall that $\Cov(\pi_t^\nu)$ is independent of $(Z_t^\dag)_{t\geq 0}$.
For any $\mu\in\P_2(\R^d)$ with $\Cov(\mu)\succ 0$,
    \begin{equation}\label{eq:gauss_conv;lin_cts}
    W_2(\pi_t^{\mu},\pi_t^{\G\mu};\Cov(\pi_t^\mu))
    \leq \exp\left(-\frac12\int_0^t \beta_c(\Cov(\pi_s^\mu))\;ds\right)W_2(\mu,\G\mu;\Cov(\mu))
    \end{equation}
    for every sample path of $(Z_t^\dag)_{t\geq 0}$, where $\beta_c:\R_{++}^{d\times d}\rightarrow\R_{++}$ is defined by
    \begin{equation}\label{def:beta_t}
        \beta_c(C):=\lambda_{\min}\left(C^{1/2}\sH^T\sG^{-1}\sH C^{1/2}+C^{-1/2}\sSig C^{-1/2}\right).
    \end{equation}
    Moreover, for all $t \ge 0$, $\beta_c(\Cov(\pi_t^\mu))\geq \beta_0$ for some $\beta_0>0$ depending on $\Cov(\mu)$, and thus
    \begin{equation}\label{eq:gauss_conv;;lin_cts;beta0}
        W_2(\pi_t^\mu,\pi_t^{\G\mu};\Cov(\pi_t^\mu))\leq e^{-\frac12\beta_0 t}W_2(\mu,\G\mu;\Cov(\mu)) \text{ for all } t\geq 0. 
    \end{equation}
\end{theorem}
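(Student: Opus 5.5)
The plan is to mirror the proof of Theorem~\ref{thm:gauss_lyapunov;disc} in continuous time, using a synchronous coupling. Write $C_t:=\Cov(\pi_t^\mu)$. Since $\Cov(\mu)=\Cov(\G\mu)$ and the covariance obeys the closed Riccati equation \eqref{eq:Riccc;diff}, we have $\Cov(\pi_t^{\G\mu})=C_t$ for all $t$. Moreover, $C_t\succ 0$ for all $t\ge 0$: $C_0\succ0$, and $\sSig\succ0$ keeps $C_t$ in the open cone. One way to see this is that $D_t:=C_t^{-1}$ solves $\dot D=-D\sF-\sF^TD-D\sSig D+\sH^T\sG^{-1}\sH$, which has a global solution.

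Fix an optimal $C_0$-weighted coupling $(X_0^\mu,X_0^{\G\mu})$ of $\mu,\G\mu$. Drive both copies of \eqref{eq:EnKF;mf;cts} with the same $B$, $W$ and the same observation path $Z^\dag$. Because the gains coincide, the difference $\Delta_t:=X_t^\mu-X_t^{\G\mu}$ satisfies the random linear ODE
\[\dot\Delta_t=(\sF-C_t\sH^T\sG^{-1}\sH)\Delta_t,\]
with all noise and observation terms cancelling pathwise. Since the joint law at time $t$ is a coupling of $\pi_t^\mu,\pi_t^{\G\mu}$, we get $W_2^2(\pi_t^\mu,\pi_t^{\G\mu};C_t)\le \E\,\Delta_t^TC_t^{-1}\Delta_t$. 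Conditioning on $Z^\dag$ is harmless, because the dynamics of $\Delta$ do not depend on $Z^\dag$, so the bound holds for every path.

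The key computation is
\[\frac{d}{dt}\Delta_t^TC_t^{-1}\Delta_t=\Delta_t^T\big(\Phi_t^TC_t^{-1}+C_t^{-1}\Phi_t-C_t^{-1}\dot C_tC_t^{-1}\big)\Delta_t,\qquad \Phi_t:=\sF-C_t\sH^T\sG^{-1}\sH.\]
Substituting $\dot C_t=\sF C_t+C_t\sF^T+\sSig-C_t\sH^T\sG^{-1}\sH C_t$, the $\sF$ terms cancel, and $-2\sH^T\sG^{-1}\sH+\sH^T\sG^{-1}\sH$ leaves
\[\frac{d}{dt}\Delta_t^TC_t^{-1}\Delta_t=-\Delta_t^T\big(\sH^T\sG^{-1}\sH+C_t^{-1}\sSig C_t^{-1}\big)\Delta_t.\]
This is the continuous analogue of Lemma~\ref{lem:lyapunov;matrices}. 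Writing $v=C_t^{-1/2}\Delta_t$, the right side equals
\[-v^T\big(C_t^{1/2}\sH^T\sG^{-1}\sH C_t^{1/2}+C_t^{-1/2}\sSig C_t^{-1/2}\big)v\le-\beta_c(C_t)\,\Delta_t^TC_t^{-1}\Delta_t.\]
Grönwall's inequality applied pathwise then gives
\[\Delta_t^TC_t^{-1}\Delta_t\le e^{-\int_0^t\beta_c(C_s)ds}\,\Delta_0^TC_0^{-1}\Delta_0.\]
Taking expectations, using the optimality of the initial coupling, and taking square roots yields \eqref{eq:gauss_conv;lin_cts}.

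For the uniform bound, first note that $\beta_c(C)>0$ for every $C\succ0$, since $C^{-1/2}\sSig C^{-1/2}\succ0$, and that $\beta_c$ is continuous on $\R^{d\times d}_{++}$. By Proposition~\ref{prop:CARE}, using detectability together with the controllability implied by $\sSig\succ0$, $C_t\to C_\infty\succ0$. Hence $\{C_t:t\ge0\}\cup\{C_\infty\}$ is a compact subset of the open cone, and $\beta_c$ attains a positive minimum $\beta_0$ there, which gives \eqref{eq:gauss_conv;;lin_cts;beta0}.

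The main obstacle is justifying the synchronous coupling rigorously. This requires well-posedness of the McKean–Vlasov SDE, which here reduces to a linear SDE with the deterministic coefficient $C_t$ once the Riccati solution is known. It also requires noting that the difference process is genuinely an ODE, so no Itô correction arises. The matrix identity itself is a short algebraic check.
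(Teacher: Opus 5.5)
Your proposal is correct and follows essentially the same route as the paper: a synchronous coupling started from a $\Cov(\mu)$-weighted optimal coupling, the pathwise linear ODE for the difference, differentiation of $\Delta_t^T C_t^{-1}\Delta_t$ via the Riccati equation to get $-\Delta_t^T(\sH^T\sG^{-1}\sH + C_t^{-1}\sSig C_t^{-1})\Delta_t$, Gr\"onwall, and the uniform rate from continuity of $\beta_c$ together with $C_t\to C_\infty\succ 0$. Writing the derivative directly as $-C_t^{-1}\dot C_t C_t^{-1}$ is only a cleaner form of the paper's computation with $P_t=C_t^{-1}$ and $R(P_t)$.
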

\begin{proof}
Let $(X_t^\mu)_{t\geq 0},(X_t^{\G\mu})_{t\geq 0}$ be two solutions of \eqref{eq:EnKF;mf;cts} with initial data $\mu,\G\mu$ respectively. Then the covariance of the two random vectors are identical for $t\geq 0$. 

We construct the following coupling $\gamma_t$ between the two solutions: $X_0^\mu,X_0^{\G\mu}$ are distributed according to the $W_2(\cdot,\cdot;\Cov(\mu))$-optimal coupling of $\mu,\G\mu$, and all the random noises involved in the SDE, namely $(B_s)_{s\leq t}, (W_s)_{s\leq t}, (Z_t^\dag)_{s\leq 0}$ (which are all independent from each other), are coupled synchronously. Then, under this joint law,
\begin{align*}
    d(X_t^\mu-X_t^{\G\mu})= (\sF-\Cov(\pi_t^\mu)\sH^T\sG^{-1}\sH)(X_t^\mu-X_t^{\G\mu}) \text{ for all } t\in[0,T].
\end{align*}
In particular, all the randomness cancels. For simplicity, write $D_t:= X_t^\mu-X_t^{\G\mu}$, $C_t:=\Cov(\pi_t^\mu)$. Invertibility of $C_t$ for $t\geq 0$ is straightforward to verify, for instance by noting that the Riccati differential equation~\eqref{eq:Riccc;diff} is a continuous-time limit of the Riccati recursion~\eqref{eq:Riccd;recursion} which preserves positive definiteness, as we have seen in Lemma~\ref{lem:lyapunov;matrices}. Moreover, direct calculations reveal that $P_t:=C_t^{-1}$ satisfies the differential equation
\[\dot P_t = -P_t \sF- \sF^T P_t - P_t\sans{\Sigma} P_t+\sH^T\sG^{-1}\sH^T=:R(P_t).\]
As $C_t\rightarrow C_\infty\succ 0$, $P_t\rightarrow P_\infty$ for some $P_\infty \succ 0$, which is the unique positive semi-definite matrix satisfying $R(P_\infty)=0$.
Thus
\begin{align*}
    \frac{d}{dt}|D_t|_{C_t}^2&=\dot D_t^T P_t D_t+ D_t^T P_t \dot D_t + D_t^T \dot P_t D_t   \\
    &=D_t^T(\sF-K_c(C_t)\sH)^T P_t D_t+D_t^T P_t(\sF-K_c(C_t)\sH) D_t + D_t^T R(P_t) D_t.
\end{align*}
Defining $\beta_t$ by
\[\beta_t:=-\lambda_{\max}\left[ P_t^{-1/2}(\sF-K_c(C_t)\sH)^T P_t^{1/2} + P_t^{1/2}(\sF-K_c(C_t)\sH)P_t^{-1/2}+ P_t^{-1/2} R(P_t) P_t^{-1/2}\right],\]
we have 
\begin{align*}
     \frac{d}{dt}|D_t|_{C_t}^2\leq -\beta_t |D_t|_{C_t}^2.
\end{align*}
Thus, applying the Gronwall inequality and taking expectations over the coupling,
\begin{align*}
    W_2^2(\pi_t^\mu,\pi_t^{\G\mu};C_t)&\leq \E^{(X_t^\mu,X_t^{\G\mu})\sim\hat\gamma_t}|X_t^\mu-X_t^{\G\mu}|_{C_t}^2\\
    &\leq \exp\left(-\int_0^t \beta_s\;ds\right)\E^{(X_0^\mu,X_0^{\G\mu})\sim\hat\gamma_0}|X_0^\mu-X_0^{\G\mu}|_{C_0}^2= W_2^2(\pi_0^\mu,\pi_0^{\G\mu};C_0),
\end{align*}
which is precisely \eqref{eq:gauss_conv;lin_cts}.

Thus it remains to establish bounds on $\beta_t$. Note
\begin{align*}
    &P_t^{-1/2}(\sF-K_c(C_t)\sH)^T P_t^{1/2} + P_t^{1/2}(\sF-K_c(C_t)\sH)P_t^{-1/2}+ P_t^{-1/2} R(P_t) P_t^{-1/2}    \\
    &=C_t^{1/2}(\sF-K_c(C_t)\sH)^T C_t^{-1/2} + C_t^{-1/2}(\sF-K_c(C_t)\sH)C_t^{1/2} \\
    &- C_t^{1/2}(C_t^{-1} F- F^T C_t^{-1} - C_t^{-1}\sans{\Sigma} C_t^{-1}+\sH^T\sG^{-1}\sH^T)C_t^{1/2}\\
    &=-C_t^{1/2}(C_t \sH^T\sG^{-1}\sH)^T C_t^{-1/2} - C_t^{-1/2}(C_t\sH^T\sG^{-1}\sH)C_t^{1/2}-C_t^{-1/2}\sSig C_t^{-1/2} + C_t^{1/2}\sH^T\sG^{-1}\sH^T C_t^{1/2}\\
    &=-C_t^{1/2}\sH^T\sG^{-1}\sH C_t^{1/2}-C_t^{-1/2}\sSig C_t^{-1/2}.
\end{align*}
Of course, the matrix on the right-hand side is strictly negative definite, thus $\beta_t=\beta_c(C_t)>0$, $\beta_\infty:=\lim_{t\rightarrow\infty}\beta_t>0$, and $\beta_t\geq \beta_0>0$ for some $\beta_0$ that may depend on $\Cov(\mu)$.
\end{proof}

\begin{remark}[Consistency of $\beta_c$ with $\beta_d$]\label{rmk:beta_cd}
We note that $\beta_c$ can be obtained as the limit of $\beta_c$ under appropriate scaling of the matrices involved. Indeed, setting \eqref{def:tau_scale}, we see
\begin{align*}
    \beta_d(C)&=\lambda_{\min}\left( C^{1/2}H^T(H C H^T+\Gamma)^{-1}H  C^{1/2}+ C^{-1/2}( C^{-1}+H^T\Gamma^{-1}H+A^T\Sigma^{-1}A)^{-1} C^{-1/2}\right)\\
    &=\lambda_{\min}\left(\tau^2 C^{1/2}\sH^T(\tau^2 \sH C\sH^T+\tau\sG)^{-1}\sH C^{1/2}\right.\\
    &\left.\qquad\qquad+C^{-1/2}(C^{-1}+\tau \sH^T\sG^{-1}\sH+\tau^{-1}(I+\tau\sF)^T\sSig^{-1}(I+\tau\sF))^{-1}C^{-1/2}\right)\\
    &=\tau\lambda_{\min}(C^{-1/2}\sH^T\sG^{-1}\sH C^{1/2}+C^{-1/2}\sSig C^{-1/2}+O(\tau))\approx \tau \beta_c(C).
\end{align*}
Thus, for $\tau\ll 1$, we have
\[(1-\beta_d(C))\approx (1-\tau\beta_c(C))\approx e^{-\tau\beta_c(C)}.\]

We also note that in continuous-time, the effect the size of the covariance has on $\beta_c$ is clearer. Indeed, as the covariance $C$ grows, the first component $C^{1/2}\sH^T\sG^{-1}\sH C^{1/2}$ becomes larger, which is consistent with the role observation plays; on the other hand, $C^{-1/2}\sSig C^{-1/2}$ blows up as the covariance vanishes, which can be understood as the signal noise destroying information faster when certainty is higher.
\end{remark}

\begin{remark}\label{rmk:DelMoral}
We compare Theorem~\ref{thm:gauss_lyapunov;cts} with the asymptotic stability results on the continuous-time MFEnKF due to Del Moral and Tugaut~\cite{DelMoralTugaut18stability}. 

Let $(X_t^\mu)_{t\geq 0},(X_t^\nu)_{t\geq 0}$ be two solutions of~\eqref{eq:EnKF;mf;cts} with initial data $\mu,\nu\in\P_2(\R^d)$. In \cite[Theorem 3.4]{DelMoralTugaut18stability}, Del Moral and Tugaut show that the Wasserstein distance between laws of $X_t^\mu$ and $X_t^\nu$ (rather than their conditional laws given the observations) exponentially decays in time. While this result allows for possibly non-Gaussian $\mu,\nu$, the randomness in observations are averaged out, hence it differs from our result which holds pathwise almost surely. On the other hand, their theorem on pathwise almost sure exponential convergence rate measured in relative entropy, \cite[Theorem 3.5]{DelMoralTugaut18stability}, assumes Gaussian initial data.

In addition, both of their results are asymptotic in nature; the convergence rates have multiplicative constant depending on initial data and an additional parameter. This is in contrast with our Theorem~\ref{thm:gauss_lyapunov;cts}, which focuses on strictly contractive bounds at the cost of using a covariance-dependent norm; we will see in Corollary~\ref{cor:MFEnKF;stable;cts} that the exponential convergence in the usual Wasserstein distance follows.

Moreover, the exponential rates in~\cite{DelMoralTugaut18stability} are characterized in terms of the logarithmic norm of $\sF-K_c(C_\infty)\sH$; the negativity of the logarithmic norm associated to the Euclidean norm is a stronger requirement than the negativity of the spectral abscissa. 

If one is willing to work with any logarithmic norm, this gap is less serious. As discussed in~\cite[Section 8.1]{BishopDelMoral23survey}, $\alpha(\sF-K_c(C_\infty)\sH)<0$ (where $C_\infty$ is the limiting covariance matrix) implies by~\cite[Theorem 5]{Strom75logarithmic} that there exists a particular norm (depending on $C_\infty$) such that the corresponding logarithmic norm of $\sF-K_c(C_\infty)\sH$ is strictly negative. In fact, our strictly contractive bounds imply that the norm weighted by the limiting covariance matrix is an explicit example of such a norm, whereas our use of the covariance-weighted (hence time-dependent) norms we are able to obtain strict contraction at each time, which is impossible in general if one has to fix the norm.
\end{remark}

We now turn to our contribution {\bf (C3)}, namely the stability and accuracy of the MFEnKF; both are straightforward consequences of Theorem~\ref{thm:gauss_lyapunov;cts}.

We first establish the asymptotic stability of the MFEnKF with respect to the classical 2-Wasserstein distance.
\begin{corollary}\label{cor:MFEnKF;stable;cts}
Suppose $\sF,\sSig\in\R^{d\times d}$, $\sH\in\R^{k\times d}$, $\sG\in\R^{k\times k}$ satisfy Assumption~\ref{ass:cts}. Denote by $\bar\lambda:=-\alpha(\sF-K_c(C_\infty)\sH)$, which by assumption is strictly positive.

Fix $\mu,\nu\in\P_2(\R^d)$ with $\Cov(\mu),\Cov(\nu)\succ 0$, and let $\pi_t^\mu,\pi_t^\nu$ be the laws of the solutions of \eqref{eq:EnKF;mf;cts} with respective initial data $\mu,\nu$. Then, for any $\eps>0$ and $t_0>0$, there exists a constant $c(\eps,\mu,\nu)>0$ such that
\begin{equation}\label{eq:MFEnKF;cts;stable}
\begin{split}
    W_2(\pi_t^\mu,\pi_t^\nu)&\leq c(\eps,\mu,\nu)\left(e^{-\frac12\int_0^t \beta_c(\Cov(\pi_s^\mu))\;ds}W_2(\hat\pi_0^{\mu},\hat\pi_0^{G\mu})
    +e^{-\frac12\int_0^t \beta_c(\Cov(\pi_s^\nu))\;ds}W_2(\hat\pi_0^{\nu},\hat\pi_0^{G\nu})\right.\\
    &\left.+ e^{-(\bar\lambda-\eps)t}W_2(\G\mu,\G\nu)\right) \quad\text{ for } t>0 \text{ almost surely. }
\end{split}
\end{equation}
\end{corollary}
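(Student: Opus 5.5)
We split by the triangle inequality through the Gaussian projections:
\[
W_2(\pi_t^\mu,\pi_t^\nu)\le W_2(\pi_t^\mu,\pi_t^{\G\mu})+W_2(\pi_t^{\G\mu},\pi_t^{\G\nu})+W_2(\pi_t^{\G\nu},\pi_t^\nu),
\]
and bound the three terms separately. Everything is done for a fixed observation path; the only place an almost-sure qualification enters is the middle term.

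For the two outer terms we convert Theorem~\ref{thm:gauss_lyapunov;cts} into a bound in the Euclidean $W_2$. For any $M\succ0$ and any coupling we have $|x-y|^2\le \lambda_{\max}(M)|x-y|_M^2$. Hence
\[
W_2(\cdot,\cdot)\le \lambda_{\max}(M)^{1/2}W_2(\cdot,\cdot;M),
\qquad
W_2(\cdot,\cdot;M)\le \lambda_{\min}(M)^{-1/2}W_2(\cdot,\cdot).
\]
Apply the first inequality with $M=\Cov(\pi_t^\mu)$ and the second with $M=\Cov(\mu)$, and insert \eqref{eq:gauss_conv;lin_cts}. This gives
\[
W_2(\pi_t^\mu,\pi_t^{\G\mu})\le \Big(\sup_{t\ge0}\lambda_{\max}(\Cov(\pi_t^\mu))\Big)^{1/2}\lambda_{\min}(\Cov(\mu))^{-1/2}\, e^{-\frac12\int_0^t\beta_c\,ds}\,W_2(\mu,\G\mu).
\]
The supremum is finite because $C_t$ solves the Riccati ODE \eqref{eq:Riccc;diff}, is continuous, and converges to $C_\infty$ by Proposition~\ref{prop:CARE}. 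The controllability needed there holds since $\sSig\succ0$. The same argument applies to $\nu$. These constants depend only on $\Cov(\mu)$ and $\Cov(\nu)$.

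For the middle term both flows start from Gaussian data and therefore remain Gaussian for all time. Their covariances $C_t^{\G\mu}=\Cov(\pi_t^\mu)$ and $C_t^{\G\nu}$ are deterministic, but they differ because $\Cov(\mu)\neq\Cov(\nu)$ in general. We couple the two flows synchronously, sharing $B$, $W$, $Z^\dag$, with initial coupling the $W_2$-optimal coupling of $\G\mu,\G\nu$. The difference $D_t$ then satisfies
\[
dD_t=(\sF-K_c(C_t^{\G\nu})\sH)D_t\,dt-(C^{\G\mu}_t-C^{\G\nu}_t)\sH^T\sG^{-1}\big(\sH X_t^{\G\mu}dt+\sqrt{\sG}\,dW_t-dZ_t^\dag\big).
\]
The forcing term does not cancel; this is the main obstacle. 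By Proposition~\ref{prop:CARE}, \eqref{eq:CARE;rate}, the factor $C^{\G\mu}_t-C^{\G\nu}_t$ decays like $\tilde c e^{-\beta t}$ for any $\beta<\bar\lambda$ (choose $\beta=\bar\lambda-\eps/2$). The homogeneous propagator $\Psi(t,s)$ of $\sF-K_c(C_t^{\G\nu})\sH$ satisfies $\|\Psi(t,s)\|\le c_\eps e^{-(\bar\lambda-\eps/2)(t-s)}$, because the coefficient converges to a matrix with spectral abscissa $-\bar\lambda$.

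Rather than control the stochastic forcing pathwise, it is cleaner to avoid the coupling here. Both $\pi_t^{\G\mu}$ and $\pi_t^{\G\nu}$ are Gaussian and equal the Kalman–Bucy filters $N(m_t^\mu,C^{\G\mu}_t)$ and $N(m_t^\nu,C_t^{\G\nu})$. For Gaussians,
\[
W_2^2=|m_t^\mu-m_t^\nu|^2+\big\|(C^{\G\mu}_t)^{1/2}-(C^{\G\nu}_t)^{1/2}\big\|_F^2,
\]
and Bures continuity bounds the covariance part by a multiple of $\|C^{\G\mu}_t-C^{\G\nu}_t\|$, using $C_\infty\succ0$. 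The mean difference solves the linear ODE
\[
\dot e_t=(\sF-K_c(C^{\G\nu}_t)\sH)e_t-(C^{\G\mu}_t-C^{\G\nu}_t)\sH^T\sG^{-1}\big(dZ_t^\dag/dt-\sH m_t^\mu\big).
\]
The innovation $dZ^\dag_t-\sH m^\mu_t\,dt$ is a stochastic integral with bounded deterministic-kernel coefficients. By variation of constants, the driving contribution is a Gaussian stochastic integral of order $\int_0^t e^{-(\bar\lambda-\eps/2)(t-s)}e^{-(\bar\lambda-\eps/2)s}\,dZ$-type terms. Its variance is $O(t^2e^{-2(\bar\lambda-\eps/2)t})$ (the innovation has bounded second moments under the true signal, since the Kalman error covariance is bounded). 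A Borel–Cantelli argument over integer times, combined with a modulus-of-continuity bound on $[n,n+1]$, then gives almost surely $|e_t|\le c\,e^{-(\bar\lambda-\eps)t}$; this is the approach of Ocone–Pardoux~\cite{OconePardeoux96asymptotic}, which we may cite directly.

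This makes the middle term at most $c(\eps,\mu,\nu)e^{-(\bar\lambda-\eps)t}$. The constant can be written with the factor $W_2(\G\mu,\G\nu)$ once we note that it vanishes when $\G\mu=\G\nu$, the only case where that factor is zero. The almost-sure random constant is absorbed into $c$. Combining the three terms yields \eqref{eq:MFEnKF;cts;stable}.
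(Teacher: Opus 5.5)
Your proposal is correct and follows essentially the paper's argument: the triangle inequality through $\G\mu,\G\nu$; Theorem~\ref{thm:gauss_lyapunov;cts} combined with the comparison of weighted and Euclidean norms for the two outer terms; and, for the middle term, the Gaussian form of $W_2$, with the mean difference controlled by Theorem~2.3 of \cite{OconePardeoux96asymptotic} and the covariance difference by \eqref{eq:CARE;rate}. One small slip: $|m_1-m_2|^2+\|C_1^{1/2}-C_2^{1/2}\|_F^2$ equals $W_2^2(N(m_1,C_1),N(m_2,C_2))$ only when $C_1,C_2$ commute, but it is always an upper bound (via the coupling $x\mapsto m_2+C_2^{1/2}C_1^{-1/2}(x-m_1)$), and an upper bound is all you need.
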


\begin{proof}
By the triangle inequality
\begin{align*}
    W_2(\pi_t^\mu,\pi_t^\nu)\leq W_2(\pi_t^\mu,\pi_t^{\G\mu})+ W_2(\pi_t^{\G\mu},\pi_t^{\G\nu})+W_2(\pi_t^{\nu},\pi_t^{\G\nu}).
\end{align*}
The first and third terms on the right-hand side can be taken care of by  Theorem~\ref{thm:gauss_lyapunov;cts}. Indeed, writing $C_t^\mu:=\Cov(\pi_t^\mu)$,
\[|v|_2\leq \sqrt{\lambda_{\max}(M)}|v|_{M},\quad |v|_{M}\leq\sqrt{\lambda_{\max}(M^{-1})}|v|_2,\]
thus \eqref{eq:gauss_conv;lin_cts}
\begin{align*}
    W_2(\pi_t^\mu,\pi_t^{\G\mu})\leq \sqrt{\lambda_{\max}(C_t^\mu)}W_2(\pi_t^\mu,\pi_t^{\G\mu})\leq \sqrt{\lambda_{\max}(C_t^\mu)} e^{-\int_0^t \beta_c(C_s^\mu)\;ds}W_2(\pi_0^\mu,\pi_0^{\G\mu};\Cov(\mu))\\
    \leq \sqrt{\lambda_{\max}(C_t^\mu)/\lambda_{\min}(\Cov(\mu))} e^{-\int_0^t \beta_c(C_s^\mu)\;ds}W_2(\pi_0^\mu,\pi_0^{\G\mu}).
\end{align*}
As $C_t^\mu\rightarrow C_\infty$, note that the coefficient $\lambda_{\max}(C_t^\mu)$ can be made uniform in time as long as $\Cov(\mu)\succ 0$. By replacing $\mu$ with $\nu$, $W_2(\pi_t^{\nu},\pi_t^{\G\nu})$ can be controlled analogously.

Thus it remains to show
\[W_2(\pi_t^{\G\mu},\pi_t^{\G\nu})\leq c_\eps e^{(\eps+\alpha(\sF-K_c(C_\infty)\sH))t}W_2(\G\mu,\G\nu) \quad\text{ for } t>0 \text{ a.s. }.\]
As $\pi_t^{\G\mu}=\bar\pi_t^{\G\mu}$ and $\pi_t^{\G\nu}=\bar\pi_t^{\G\nu}$, this is is a consequence of the results of Ocone and Pardoux~\cite{OconePardeoux96asymptotic} on the stability of the optimal filter. We sketch the argument below for completeness. As $\bar\pi_t^{\G\mu},\bar\pi_t^{\G\nu}$ are Gaussians hence
\begin{align*}W_2^2(\bar\pi_t^{\G\mu},\bar\pi_t^{\G\nu})
&= |\bar\pi_t^{\G\mu}(\id)-\bar\pi_t^{\G\nu}(\id)|^2+\tr\left(C_t^\mu+C_t^\nu-2\left((C_t^\mu)^{1/2}C_t^\nu (C_t^\mu)^{1/2}\right)^{1/2}\right) \\
&\leq |\bar\pi_t^{\G\mu}(\id)-\bar\pi_t^{\G\nu}(\id)|^2+\frac{d}{4\lambda_{\min}(C_t^\mu)\wedge\lambda_{\min}(C_t^\nu)}\norm{C_t^\mu-C_t^\nu}_2^2.
\end{align*}
As $C_t^\mu,C_t^\nu\rightarrow C_\infty$, the constant $\lambda_{\min}(C_t^\mu)\wedge\lambda_{\min}(C_t^\nu)$ can be made independent of $t>0$.

Moreover, Theorem 2.3 of~\cite{OconePardeoux96asymptotic} states that for any $\eps>0,$ there exists $c(\eps,\mu,\nu)>0$ such that
\[|\bar\pi_t^{\G\mu}(\id)-\bar\pi_t^{\G\nu}(\id)|\leq c(\eps,\mu,\nu) e^{(\eps+\alpha(\sF-K_c(C_\infty)\sH))t} \text{ almost surely. }\]
The same upper bound holds for $\norm{C_t^\mu-C_t^\nu}$ by \eqref{eq:CARE;rate}.
\end{proof}

Now we turn to the question of accuracy of the MFEnKF. 
Combining with the classical result due to Ocone and Pardoux\cite[Theorem 2.3, Theorem 2.6]{OconePardeoux96asymptotic}, deduce that any sufficiently regular statistic of the MFEnKF distribution coincides with that of the optimal filter distribution in large time.

\begin{corollary}\label{cor:MFEnKF_accurate;cts}
Suppose $\sF,\sSig\in\R^{d\times d}$, $\sH\in\R^{k\times d}$, $\sG\in\R^{k\times k}$ satisfy Assumption~\ref{ass:cts}.

Let $\varphi:\R^d\rightarrow\R$ be either a uniformly continuous and bounded test function or $\varphi(x)=x_i$ for some $i=1,\cdots,d$. Let $\mu,\nu\in\P_2(\R^d)$ be the respective initial data with for the MFEnKF distributions $(\pi_t^\mu)_{t\geq 0}$ and the optimal filtering distributions $(\bar\pi_t^\nu)_{t\geq 0}$, $\Cov(\mu),\Cov(\nu)\succ 0$. Then, for almost every sample path of $(Z_t^\dag)_{t\geq 0}$,
    \begin{equation}\label{eq:MFEnKF_accurate;cts}
        \pi_t^\mu(\varphi)-\bar\pi_t^\nu(\varphi)\rightarrow 0 \text{ as } t\rightarrow\infty.
    \end{equation}
\end{corollary}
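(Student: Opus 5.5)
The plan is a three-term decomposition, following the heuristic already written out in Remark~\ref{rmk:accuracy;discrete}:
\[
\pi_t^\mu(\varphi)-\bar\pi_t^\nu(\varphi)=\bigl(\pi_t^\mu-\pi_t^{\G\mu}\bigr)(\varphi)+\bigl(\bar\pi_t^{\G\mu}-\bar\pi_t^{\G\nu}\bigr)(\varphi)+\bigl(\bar\pi_t^{\G\nu}-\bar\pi_t^{\nu}\bigr)(\varphi).
\]
Here I use that $\pi_t^{\G\mu}=\bar\pi_t^{\G\mu}$, because on the Gaussian manifold the MFEnKF coincides with the Kalman--Bucy filter. I will show that each term tends to zero for almost every observation path. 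Since a countable intersection of full-measure events has full measure, the corollary then follows.

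\textbf{First term.} Theorem~\ref{thm:gauss_lyapunov;cts}, estimate \eqref{eq:gauss_conv;;lin_cts;beta0}, gives $W_2(\pi_t^\mu,\pi_t^{\G\mu};\Cov(\pi_t^\mu))\le e^{-\beta_0 t/2}W_2(\mu,\G\mu;\Cov(\mu))$ for every sample path. As in the proof of Corollary~\ref{cor:MFEnKF;stable;cts}, I convert this to the Euclidean $W_2$ using $\sup_t\lambda_{\max}(\Cov(\pi_t^\mu))<\infty$, which holds because $C_t\to C_\infty$. So $W_2(\pi_t^\mu,\pi_t^{\G\mu})\to0$ pathwise.
\begin{itemize}
\item For $\varphi(x)=x_i$, the bound $|\pi(\varphi)-\pi'(\varphi)|\le W_1\le W_2$ finishes this term.
\item For bounded uniformly continuous $\varphi$ with modulus $\omega$, I take an optimal coupling $\gamma$. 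For any $\delta>0$,
\[
|\pi(\varphi)-\pi'(\varphi)|\le \omega(\delta)+2\|\varphi\|_\infty\,\gamma(|x-y|>\delta)\le \omega(\delta)+2\|\varphi\|_\infty W_2^2/\delta^2,
\]
and this tends to zero after first letting $t\to\infty$ and then $\delta\to0$.
\end{itemize}

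\textbf{Second term.} This is the Kalman--Bucy filter started from two nondegenerate Gaussians. The proof of Corollary~\ref{cor:MFEnKF;stable;cts} already shows $W_2(\bar\pi_t^{\G\mu},\bar\pi_t^{\G\nu})\to0$ almost surely. It combines Theorem~2.3 of Ocone--Pardoux for the means with \eqref{eq:CARE;rate} for the covariances, via the Gaussian $W_2$ formula. The same argument as for the first term then handles both classes of $\varphi$.

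\textbf{Third term.} This compares the optimal filter started from non-Gaussian $\nu$ with the one started from $\G\nu$. It is the main obstacle, because the optimal filter does not preserve the covariance structure and none of our contraction estimates apply. I would invoke Ocone--Pardoux directly: Theorem~2.6 of \cite{OconePardeoux96asymptotic} gives $\bar\pi_t^\nu(\varphi)-\bar\pi_t^{\G\nu}(\varphi)\to0$ almost surely for bounded uniformly continuous $\varphi$. Their Theorem~2.3, adapted from Makowski's non-Gaussian analysis, gives the same for conditional means, i.e.\ $\varphi=x_i$.
\begin{itemize}
\item The hypotheses needed are c-detectability and $\sSig\succ0$, so that the pair is controllable, together with the change of coordinates to $\sG=I$ from Remark~\ref{rmk:OconePardoux}.
\item One point to check is whether their theorem requires $\nu\ll\G\nu$ or an initial moment condition. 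Since $\Cov(\nu)\succ0$, the Gaussian $\G\nu$ has full support. If their comparison is stated against an arbitrary Gaussian, I would instead compare $\bar\pi^\nu_t$ with $\bar\pi^{N(0,I)}_t$ and absorb the difference into the second term.
\end{itemize}
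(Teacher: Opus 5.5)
Your proposal is correct and is essentially the paper's proof. It uses the same three-term decomposition through $\G\mu$ and $\G\nu$, Theorem~\ref{thm:gauss_lyapunov;cts} for the first term on every path, and Theorems~2.3 and~2.6 of Ocone--Pardoux for the other two; routing the Gaussian--Gaussian term through the $W_2$ estimate in the proof of Corollary~\ref{cor:MFEnKF;stable;cts} is only a minor variant of the same citation. You also spell out how the weighted $W_2$ contraction yields convergence of $\pi_t^\mu(\varphi)$ for both classes of test functions (via $\sup_t\lambda_{\max}(\Cov(\pi_t^\mu))<\infty$, $W_1\le W_2$, and the modulus-of-continuity/Chebyshev bound), a step the paper leaves implicit.
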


\begin{proof}
    Recall that $\pi_t^{\G\mu}=\bar\pi_t^{\G\mu}$. Thus
    \begin{align*}
        \pi_t^\mu(\varphi)-\bar\pi_t^\nu(\varphi)=(\pi_t^\mu(\varphi)-\pi_t^{\G\mu}(\varphi))+(\bar\pi_t^{\G\mu}(\varphi)-\bar\pi_t^{\G\nu}(\varphi))+(\bar\pi_t^{\G\nu}(\varphi)-\bar\pi_t^\nu(\varphi)).
    \end{align*}
    Theorem~\ref{thm:gauss_lyapunov;cts} implies that the first difference on the right-hand side vanishes for every sample path of $(Z_t^\dag)_{t\geq 0}$. Theorem 2.3 and Theorem 2.6 of Ocone and Pardoux~\cite{OconePardeoux96asymptotic} ensure that the second and third differences vanish for almost every sample path of the observation.
\end{proof}

\begin{remark}\label{rmk:MFEnKF_accuracy}
    If for almost every sample path of $(Z_t^\dag)_{t\geq 0}$ \eqref{cor:MFEnKF_accurate;cts} can be established for all bounded uniformly continuous $\varphi$, this in particular would imply $\pi_t^\mu-\bar\pi_t^\nu\rightarrow 0$ almost surely in the weak topology or (equivalently) the bounded-Lipschitz metric (also known as Dudley's metric). Although the solutions of the MFEnKF satisfy a stronger stability property (see Corollary~\ref{cor:MFEnKF;stable;cts}), the authors could not locate in the literature a comparable form of asymptotic stability for the optimal filtering distribution for non-Gaussian initial data, even in the linear time-invariant setting. 

\end{remark}

\section{Conclusions}

In this paper, we have precisely characterized the large-time behavior of the MFEnKF in the linear setting where the signal-observation pair is detectable and the signal noise is nondegenerate, by showing nonasymptotic exponential contraction towards the Gaussian manifold. This was made possible by identifying the variational structure of the algorithm, which involves the covariance-weighted Wasserstein distance. We note here that a similar variational argument applies in the case of nonlinear observation -- i.e. when $x\mapsto Hx$ is replaced by $x\mapsto h(x)$. In this case, the resulting scheme does not coincide with the MFEnKF, as the corresponding Euler-Lagrange equation involves the gradient of $h$; thus we did not treat the more general case in this paper.

Moreover, in the same setting, we showed that the MFEnKF accurately approximates the optimal filter in large time regardless of the choice of initial data for the two filters, which may be unexpected given that each recursive step may poorly approximate the optimal filtering algorithm. This reminds us that an effective filtering algorithm need not approximate the Bayes update well at each step, and what is perhaps more important is to emulate the large-time behavior of the optimal filter. Indeed, we have heavily relied on the theory of matrix Riccati equations that characterize the limiting shape of the optimal filter in the linear setting. Thus, we believe that characterizing the large-time behavior of the optimal filter in the nonlinear setting, where we expect a much more rich and complex behavior, is not only of mathematical interest but also important for designing new filtering algorithms.

We conclude by mentioning a few open problems related to the contributions of this paper. Firstly, it is natural to ask if the quantitative stability estimates in Theorem~\ref{thm:gauss_lyapunov;disc} can be used to obtain improved global-in-time mean-field convergence results. Secondly, a stronger stability results on the optimal filter in the linear setting is desirable, as it would immediately strengthen the statement of accuracy of the MFEnKF in Corollary~\ref{cor:MFEnKF_accurate;cts}. Indeed, the accuracy in the corollary was established in a weak sense, due to the limitations in the available results on the stability of the optimal filter with respect to the initial data.

\vspace{0.75in}

\noindent{\bf Acknowledgments}
AMS is supported by a Department of Defense (DoD) Vannesar Bush Faculty Fellowship (award N00014-22-1-2790). FH is supported by start-up funds at the
California Institute of Technology and by NSF CAREER Award 2340762.

\bibliography{bib}
\bibliographystyle{siam}

\begin{appendix}

\section{Preliminaries on the matrix-weighted Wasserstein distances}\label{app:W2M}

In this section, we provide the proofs of Proposition~\ref{prop:funct_ineq;W2M} and Lemma~\ref{lem:EL;potential}; we will first establish a few preliminary lemmas. The arguments used are straightforward modifications of those in the book of Ambrosio, Gigli, and Savar\'{e}~\cite{AGS}. Hence, our proofs will be often brief, pointing to the corresponding arguments in the book and stating necessary modifications.

\begin{lemma}\label{lem:potential;cvx}
    Let $U\in \C^2(\R^d)$, and suppose there exists $M\in\R^{d\times d}_{++}$ such that $\nabla^2  U(x)\succeq M^{-1}$ for all $x\in\R^d$. Then the potential energy
    \begin{equation}\label{def:V}
        \V(\mu):=\int_{\R^d}U(x)\;d\mu(x)
    \end{equation}
    is 1-convex along $W_{2}(\cdot,\cdot;M)$-geodesics -- i.e. 
    \begin{equation}\label{eq:V;W2M_geocvx}
        \V(\mu_t)\leq (1-t)\V(\mu)+t\V(\nu)-\frac{t(1-t)}{2}W_{2}^2(\mu,\nu;M)
    \end{equation}
    for any constant-speed $W_2(\cdot,\cdot;M)$-geodesic $(\mu_t)_{t\in[0,1]}$ from $\mu\in\P_2(\R^d)$ to $\nu\in\P_2(\R^d)$.
\end{lemma}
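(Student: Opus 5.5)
The plan is to reduce the statement to a pointwise inequality along the coupling that generates the geodesic, after a change of variables that turns $|\cdot|_M$ into the Euclidean norm. Fix a constant-speed $W_{2,M}$-geodesic $(\mu_t)_{t\in[0,1]}$ from $\mu$ to $\nu$. Choose an optimal plan $\gamma\in\Pi_o(\mu,\nu;M)$ that realizes it, so that $\mu_t=((1-t)\id^1+t\id^2)_\#\gamma$. This is the analogue of the standard representation of $W_2$-geodesics in \cite[Theorem 7.2.2]{AGS}. To justify it, I will use the linear isomorphism $L:=M^{-1/2}$. We have $|x-y|_M=|Lx-Ly|$, so $W_2(\mu,\nu;M)=W_2(L_\#\mu,L_\#\nu)$ and $L_\#$ is an isometry from $(\P_2(\R^d),W_{2,M})$ onto $(\P_2(\R^d),W_2)$. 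It maps constant-speed geodesics to constant-speed geodesics, and since $L$ is linear it commutes with the interpolation maps. Therefore every $W_{2,M}$-geodesic has the displacement-interpolation form, pulled back from the Euclidean case.

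With this representation, the computation reduces to a pointwise one. We have
\[\V(\mu_t)=\iint U((1-t)x+ty)\;d\gamma(x,y).\]
For fixed $(x,y)$, set $g(t):=U((1-t)x+ty)$. Then
\[g''(t)=(y-x)^T\nabla^2U\,(y-x)\geq (y-x)^TM^{-1}(y-x)=|x-y|_M^2.\]
So $g(t)-\tfrac{t^2}{2}|x-y|_M^2$ is convex on $[0,1]$. This gives the standard inequality
\[g(t)\leq (1-t)g(0)+tg(1)-\tfrac{t(1-t)}{2}|x-y|_M^2.\]
Integrating against $\gamma$ and using the optimality of $\gamma$, namely $\iint|x-y|_M^2\,d\gamma=W_2^2(\mu,\nu;M)$, yields \eqref{eq:V;W2M_geocvx}.

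The main obstacle is integrability. Without further growth assumptions $U$ may be unbounded above, and $\V$ might equal $+\infty$ at an endpoint. Since $\nabla^2U\succeq M^{-1}\succ0$, $U$ is bounded below by a quadratic minus an affine function, hence bounded below by an affine function. Such a function is $\gamma$-integrable because $\gamma$ has finite second moments. So all integrals are well defined in $(-\infty,+\infty]$. If $\V(\mu)$ or $\V(\nu)$ is infinite, the inequality is trivial. Otherwise the pointwise inequality can be integrated term by term, splitting off the affine lower bound if necessary. I would state this convention, $\V:\P_2\to(-\infty,+\infty]$, explicitly. I would also note that uniqueness of the geodesic is not needed, only its representation through some optimal plan, which the isometry argument provides.
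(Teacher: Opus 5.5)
Your proposal is correct and follows the paper's proof. Both use the pointwise bound $g''(t)\geq |x-y|_M^2$ for $g(t)=U((1-t)x+ty)$, coming from $\nabla^2U\succeq M^{-1}$, and integrate the resulting inequality against a $W_{2,M}$-optimal plan. Your isometry argument via $M^{-1/2}$, which shows every $W_{2,M}$-geodesic is a displacement interpolation, and your remarks on integrability fill in details the paper leaves implicit.
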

\begin{proof}
    Fix any $x,y\in\R^d$, and define $x_t:=(1-t)x+t y$. Then by standard calculations involving the first two derivatives of $f(t):= U(x_t)$, most importantly that 
    \[f''(t)=(x-y)^T\nabla^2 U(x_t) (y-x)\geq |x-y|_M^2,\]
    we can deduce
    \[U(x_t)\leq (1-t)U(x)+t U(y)-\frac{t(1-t)}{2}|x-y|_M^2.\]
    We then conclude \eqref{eq:V;W2M_geocvx} by integrating over an optimal coupling $\gamma(dxdy)\in\Pi_{o}(\mu,\nu;M)$.
\end{proof}

We can also deduce that the entropy functional is convex along $W_{2}(\cdot,\cdot;M)$-geodesics.
\begin{lemma}\label{lem:entropy;cvx}
    The entropy functional $\Ent:\P_2(\R^d)\rightarrow(-\infty,+\infty]$ defined by
    \begin{equation}\label{def:Ent}
    \Ent(\mu):=
    \begin{cases}
        \int_{\R^d}\log\frac{d\mu}{d\Leb^d}\;d\Leb^d &\text{ if } \mu\ll\Leb^d,\\
        +\infty &\text{ otherwise }
    \end{cases}
    \end{equation}
    is convex along $W_2(\cdot,\cdot;M)$-geodesics for any $M\in\R^{d\times d}_{++}$.
\end{lemma}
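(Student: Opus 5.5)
The plan is to reduce to the Euclidean case by a linear change of variables. Set $L:=M^{-1/2}$ and $S(x):=Lx$. For any coupling $\gamma\in\Pi(\mu,\nu)$ we have $|x-y|_M^2=|Lx-Ly|^2$, and $(S\times S)_\#$ is a bijection from $\Pi(\mu,\nu)$ onto $\Pi(S_\#\mu,S_\#\nu)$. It follows that
\[W_2(\mu,\nu;M)=W_2(S_\#\mu,S_\#\nu),\]
and that $\gamma\in\Pi_o(\mu,\nu;M)$ if and only if $(S\times S)_\#\gamma$ is $W_2$-optimal. Since $S$ is linear, it maps the displacement interpolation $x_t=(1-t)x+ty$ to $(1-t)Sx+tSy$. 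Consequently, constant-speed $W_{2,M}$-geodesics $(\mu_t)$ from $\mu$ to $\nu$ correspond exactly to constant-speed $W_2$-geodesics $(S_\#\mu_t)$ from $S_\#\mu$ to $S_\#\nu$. To see that every such geodesic is a displacement interpolation, I would appeal to the corresponding characterization for $W_2$ in \cite[Theorem 7.2.2]{AGS}, transported back through $S$.

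Next, I would track how the entropy changes under $S$. If $\mu\ll\Leb^d$ with density $\rho$, then $S_\#\mu$ has density $\rho(S^{-1}z)|\det L|^{-1}$. Hence
\[\Ent(S_\#\mu)=\Ent(\mu)-\log\det L=\Ent(\mu)+\tfrac12\log\det M.\]
If $\mu$ is not absolutely continuous, then neither is $S_\#\mu$, so both sides equal $+\infty$. In every case $\Ent\circ S_\#$ differs from $\Ent$ by an additive constant, and this constant cancels in the convexity inequality because $(1-t)+t=1$.

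Finally, I would invoke McCann's displacement convexity of the entropy in $W_2$ (see \cite[Proposition 9.3.9]{AGS}, which gives convexity along every $W_2$-geodesic, for instance via generalized geodesics). Applying it to $(S_\#\mu_t)$ gives
\[\Ent(S_\#\mu_t)\le(1-t)\Ent(S_\#\mu)+t\Ent(S_\#\nu).\]
Subtracting the constant $\tfrac12\log\det M$ from both sides yields the claim for $(\mu_t)$.

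The main point needing care is the correspondence of geodesics: I must ensure that every $W_{2,M}$-geodesic, not only those induced by an optimal transport map, pushes forward to a $W_2$-geodesic. This holds because $S_\#$ is an isometry from $(\P_2,W_{2,M})$ onto $(\P_2,W_2)$, so geodesics are mapped to geodesics, which makes the step immediate.
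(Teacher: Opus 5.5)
Your proof is correct, and it reaches the result by a different route than the paper. The paper does not change coordinates. It asserts that the proof of \cite[Proposition 9.3.9]{AGS} can be rerun verbatim with Euclidean optimal maps replaced by optimal maps for the cost $|x-y|_M^2$, and it cites \cite[Theorem 6.2.7, Lemma 5.5.3]{AGS} for the required positivity of the approximate differential $\tilde\nabla T$.

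You instead observe two facts. First, $S_\#$ with $S=M^{-1/2}$ is an isometry of $(\P_2(\R^d),W_{2,M})$ onto $(\P_2(\R^d),W_2)$. Second, $\Ent(S_\#\mu)=\Ent(\mu)+\frac12\log\det M$. Together these reduce the lemma literally to McCann's theorem, with nothing re-proved. This buys several things:
\begin{itemize}
\item Every $W_{2,M}$-geodesic is handled at once, not only those induced by maps, because isometries carry geodesics to geodesics.
\item The convexity along generalized geodesics mentioned after the lemma follows the same way, since the linear map $S$ sends generalized geodesics to generalized geodesics.
\item You avoid a point the paper glosses over. A $W_{2,M}$-optimal map has the form $T=M\nabla\psi$ with $\psi$ convex, so $\tilde\nabla T=M\nabla^2\psi$ is not symmetric. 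It is ``positive'' only in the sense of being similar to $M^{1/2}\nabla^2\psi M^{1/2}\succeq 0$, and that similarity is what McCann's determinant-concavity step actually uses.
\item The same change of variables also gives Lemma~\ref{lem:potential;cvx} and the Talagrand-type inequality \eqref{eq:Talagrand;W2M} directly from their Euclidean versions. This works because $z\mapsto U(M^{1/2}z)$ has Hessian $\succeq I$ and $\kl$ is invariant under push-forward by a bijection.
\end{itemize}
The paper's route works directly with the structure of optimal maps for the given cost. Its advantage is that it could, in principle, be adapted to translation-invariant strictly convex costs $h(x-y)$ that are not quadratic forms, where no linear change of variables reduces matters to the Euclidean case. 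For a constant weight matrix $M$, your reduction is the cleaner and more fully justified argument.
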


\begin{proof}
    The exact same argument due to Ambrogio, Gigli, and Savar\'{e}~\cite[Proposition 9.3.9]{AGS} applies when we replace $W_2$-optimal transport maps with $W_{2}(\cdot,\cdot;M)$-optimal transport map.
    The crucial property of the transport maps $T$ exploited in the proof is the positive definiteness of its approximate differential $\tilde\nabla T$, which is ensured by ~\cite[Theorem 6.2.7, Lemma 5.5.3]{AGS}, as the cost functional $c(x,y)=|x-y|_M^2$ satisfies the appropriate convexity and growth conditions.
\end{proof}
We note that in fact, $\Ent$ satisfies a stronger property, namely convexity along generalized geodesics with respect to $W_2(\cdot,\cdot;M)$.
\medskip

We now present the proof of Proposition~\ref{prop:funct_ineq;W2M}.

\begin{proof}[Proof of Proposition~\ref{prop:funct_ineq;W2M}]
    For simplicity, we write $\Ecal(\mu):=\kl(\mu||\pi)$ in this proof. The first statement \eqref{eq:KL;cvx} follows from adding the analogous inequalities satisfied by the potential energy and the entropy, which have been established in Lemmas~\ref{lem:potential;cvx}-\ref{lem:entropy;cvx}. 

    To show~\eqref{eq:Talagrand;W2M}, fix any $\mu\in\P_2(\R^d)$ and let $(\mu_t)_{t\in[0,1]}$ be the constant speed geodesic between $\mu_0=\pi$ and $\mu_1=\mu$. Then reorganizing the terms in ~\eqref{eq:KL;cvx}, we have, for all $t\in(0,1)$,
    \[\frac{\Ecal(\mu_t)-\Ecal(\pi)}{t}\leq \Ecal(\mu)-\Ecal(\pi)-\frac{1-t}{2}W_2^2(\mu,\nu;M).\]
    Recalling that $\pi$ is the minimizer of $\Ecal$ we see the left-hand side is nonnegative, thus we deduce \eqref{eq:Talagrand;W2M} by reorganizing, noting $\Ecal(\pi)=0$, and taking $t\rightarrow 0$.

    The log-Sobolev-type inequality \eqref{eq:LSI;W2M} follows from \eqref{eq:HWI;W2M} by 
    completing the square. 
    Thus it remains to show the HWI-type inequality~\eqref{eq:HWI;W2M}. It is a direct consequence of two facts about the (local) metric slope of $\Ecal$, defined by
    \begin{equation}\label{eq:localslope}
        |\partial\Ecal|_{M}(\mu):=\limsup\left\{\frac{(\Ecal(\mu)-\Ecal(\nu))_+}{W_{2}(\mu,\nu;M)}:\;\nu\rightarrow \mu \text{ w.r.t. } W_{2,M}\right\},
    \end{equation}
    where $(a)_+:=\max\{a,0\}$. Firstly, due to the 1-geodesic convexity of $\Ecal$, the local metric slope allows the representation~\cite[Theorem 2.4.9]{AGS}
    \[|\partial\Ecal|_M(\mu)=\sup_{\nu\neq \mu}\left\{\frac{\Ecal(\mu)-\Ecal(\nu)}{W_{2,M}(\mu,\nu)}+\frac12 W_{2,M}(\mu,\nu)\right\},\]
    reorganization of which leads to
    \[\Ecal(\mu)\leq W_{2,M}(\mu,\pi)|\partial\Ecal|_M(\mu)-\frac{1}{2}W_{2,M}^2(\mu,\pi).\]
    Finally, one can verify that $|\partial\Ecal|_M^2(\mu)=I_M(\mu||\pi)$, which is implied for instance by a much more general result due to Ambrosio, Gigli, and Savar\'{e}~\cite[Theorem 9.3]{AGS14Inventiones}. 
\end{proof}

We conclude with a proof of Lemma~\ref{lem:EL;potential}.
\begin{proof}[Proof of Lemma~\ref{lem:EL;potential}]
    The proof is a straightforward adaptation of arguments due to Ambrosio, Gigli, and Savar\'{e}~\cite[Lemma 10.3.4, Proposition 10.4.2]{AGS}, but we include a proof here for completeness. To start, we claim that it suffices to show that for any $\gamma\in\Pi_{o}(\pi^\ast,\pi;M)$, $T\in L^2(\gamma;\R^d)$ and $\nu=T_\#\gamma\in\P_2(\R^d)$, 
    \begin{equation}\label{eq:strong_subdiff}
    \begin{split}
        \int V(y)\;d\nu(y) - \int V(x)\;d\pi^\ast(x)\geq &\langle M^{-1}(y-x),T(x,y)-x\rangle_{L^2(\gamma;\R^d)}\\
        &-\frac{1}{2}\|M^{-1/2}(T-\id^1)\|_{L^2(\gamma;\R^d)}^2.
    \end{split}
    \end{equation}
    Indeed, suppose \eqref{eq:strong_subdiff} is true. Choose $T(x,y)-x=\eps v(x,y)$ for some vector field $v\in L^2(\gamma;\R^d)$. Then
    \begin{align*}
    \frac{1}{2} \eps^2 \norm{M^{-1/2}v}_{L^2(\gamma;\R^d)}^2 &+\eps\langle M^{-1} (y-x), v\rangle_{L^2(\gamma;\R^d)}  
    \leq \int V(y)\;d\nu(y)-\int V(x)\;d\pi^\ast(x)  \\
    &=\int V(T(x,y))-V(x)\;d\gamma(x,y)= \int V(x+\eps v(x,y))-V(x)\;d\gamma(x,y)\\
    &= \langle \eps\nabla V(x),v(x,y)\rangle_{L^2(\gamma;\R^d)}+o(\eps).
    \end{align*}
    Dividing both sides by $\eps$ and letting $\eps\to 0$ we see
    \[\langle M^{-1}(y-x),v(x,y)\rangle_{L^2(\gamma;\R^d)}\leq \langle \nabla V(x),v(x,y)\rangle_{L^2(\gamma;\R^d)}.\]
    Replacing $v$ with $-v$ we see that in fact the above inequality is an equality for any $v\in L^2(\gamma;\R^d)$. Hence $y-x=M\nabla V(x)$ for $\gamma$-a.e, and we may conclude that any $W_{2,M}$-optimal transport plan $\gamma$ is given by~\eqref{eq:EL;potential}. 
    

    Thus it remains to show \eqref{eq:strong_subdiff}. Fix any $\gamma\in\Pi_{o}(\pi^\ast,\pi;M)$ and $T\in L^2(\gamma;\R^d)$, and define $\nu=T_\#\gamma \in\P_2(\R^d)$, noting that $(T\times \id^2)_\#\gamma\in\Pi(\nu,\pi)$, where $\id^2$ is the projection onto the second $d$-dimensional coordinate. By the definition of $\pi^\ast$,
    \begin{align*}
        \int V(y)\;d\nu(y)&-\int V(x)\;d\pi^\ast(x)
        \geq \frac{1}{2}\left(W_{2,M}^2(\pi^\ast,\pi)-W_{2,M}^2(\nu,\pi)\right)\\
        &\geq \frac{1}{2}\int \left(|y-x|_M^2-|y-T(x,y)|_M^2\right)\;d\gamma(x,y)  \\
        &=\int  \langle M^{-1}(y-x),T(x,y)-x\rangle\;d\gamma(x,y) - \frac{1}{2}\|M^{-1/2}(T-\id^1)\|_{L^2(\gamma;\R^d)}^2
    \end{align*}
Where in the last equality we used the elementary identity
\begin{align*}
    \frac12 |a|^2_M-\frac12 |b|_M^2 = \langle a,a-b\rangle_M - \frac12 |a-b|_M^2.
\end{align*}
\end{proof}

\end{appendix}

\end{document}